\documentclass[12pt]{article}

\usepackage[english]{babel}
\usepackage{upref,amsfonts,amsxtra,latexsym,color}
\usepackage{amssymb,amsmath,amsthm,a4wide,epsf,mathrsfs,verbatim,hyperref}
\usepackage[all]{xy}


\newtheorem{theorem}{Theorem}[subsection]
\newtheorem{lemma}[theorem]{Lemma}
\newtheorem{corollary}[theorem]{Corollary}
\newtheorem{proposition}[theorem]{Proposition}

\newtheorem{conjecture}[theorem]{Conjecture}

\theoremstyle{definition}
\newtheorem{definition}[theorem]{Definition}
\newtheorem{remark}[theorem]{Remark}
\newtheorem{example}[theorem]{Example}

\numberwithin{equation}{section}
\numberwithin{theorem}{section}

\newcommand{\tensorhat}{\, \widehat{\otimes} \,}
\newcommand{\otensorhat}{\, \widehat{\odot} \,}

\newcommand{\rank}{\mathrm{rank}}

\newcommand{\Tr}{\mathrm{Tr}}
\newcommand{\tr}{\mathrm{tr}}

\newcommand{\sa}{\mathrm{sa}}
\newcommand{\mx}{\mathrm{max}}
\newcommand{\aff}{\mathrm{aff}}

\newcommand{\cA}{{\cal A}}

\newcommand{\cB}{{\cal B}}
\newcommand{\cE}{{\cal E}}

\newcommand{\cC}{{\cal C}}

\newcommand{\C}{{\mathbb C}}

\newcommand{\N}{{\mathbb N}}

\newcommand{\R}{{\mathbb R}}

\newcommand{\Cs}{{$C^*$-al\-ge\-bra}}

\newcommand{\id}{\mathrm{id}}

\newcommand{\conv}{{\mathrm{conv}}}
\newcommand{\sh}{{$^*$-ho\-mo\-mor\-phism}}





\date{}
\setcounter{tocdepth}{4}
\setcounter{section}{0}

\title{Entanglement in $C^*$-algebras: tensor products of state spaces}

\author{Magdalena Musat and Mikael R\o rdam}

\begin{document}

\maketitle

\begin{abstract} 
We analyze the Namioka--Phelps minimal and maximal tensor products of compact convex sets which arise as the state spaces of unital \Cs s. Relatedly,  we study entanglement in (infinite dimensional) \Cs s. 
While the \emph{minimal} Namioka--Phelps tensor product of the state spaces of two \Cs s is  the well-known set of separable (= un-entangled) states on the (minimal) tensor product of the \Cs s, we also describe  the more elusive \emph{maximal}  Namioka--Phelps tensor product of state spaces of \Cs s. We show that the minimal and maximal tensor products of state spaces of \Cs s agree precisely when one of the two \Cs s is commutative,  which confirms Barker's conjecture in the case where the compact convex sets are state
 spaces of \Cs s. 

Further, the Namioka--Phelps tensor product of the trace simplexes of two or more unital \Cs s  is shown to be the trace simplex of the (minimal or maximal) tensor product of the \Cs s. This enables a systematic way of determining the trace simplex of a tensor product of \Cs s.
\end{abstract}

\section{Introduction}

\noindent Namioka and Phelps introduced in their paper, \cite{NP:PJM1969}, two different ways in which one can associate a tensor product of two convex compact sets $K_1$ and $K_2$: a minimal and a maximal one, here denoted by $K_1 \otimes_* K_2$ and $K_1 \otimes^* K_2$, respectively. The two notions depend on two different ways in which one can define a positive cone in the (algebraic) tensor product of the ordered vector spaces of affine functions on $K_1$ and $K_2$: an ``inner'' positive cone generated by positive elementary tensors, and an ``outer'' positive cone consisting of elements  that are positive when paired with a tensor product of positive functionals. Namioka and Phelps proved that the two tensor products agree when one of the two convex compact sets is a Choquet simplex. The converse, if $K_1 \otimes_* K_2 \ne K_1 \otimes^* K_2$, whenever neither $K_1$ nor $K_2$ are Choquet simplexes, is known as Barker's conjecture. (Barker conjectured in  \cite{Barker:tensor}, see also his  survery paper \cite{Barker:cones}, that the analogous statement should hold for finite dimensional cones.) Namioka and Phelps showed that Barker's conjecture holds when one of the two convex compact sets is a square, and Aubrun--Lami--Palazuelos--Plavala, \cite{ALPP:GAFA2021}, recently settled Barker's conjecture in its original formulation, that is, for finite dimensional cones, and hence also for finite dimensional compact convex sets.

The overall purpose of this paper is to analyze the Namioka--Phelps tensor products of the state spaces  of two \Cs s, and to relate these to entanglement phenomena in (infinite dimensional) \Cs s.  
More specifically, given two unital \Cs s $\cA$ and $\cB$, 
we describe the Namioka--Phelps tensor products of the state spaces $S(\cA)$ and $S(\cB)$ as  sets of functionals on the tensor products  of $\cA$ and $\cB$ (algebraic, spatial and the maximal). 
The minimal tensor product $S(\cA) \otimes_* S(\cB)$ is precisely the set of separable (= un-entangled) states on the spatial (minimal) tensor product $\cA \otimes \cB$, thus connecting the Namioka--Phelps tensor products to entanglement in quantum information theory. The maximal tensor product, $S(\cA) \otimes^* S(\cB)$, is more elusive, but can be understood in terms of unital positive maps, cf.\ Proposition~\ref{prop:max-tensor-matrix} and Remark~\ref{rem:max-positive}.  
Both tensor products of state spaces are well understood in the finite dimensional case, cf.\ Remark~\ref{rem:entangled-matrices} and Theorem~\ref{prop:kappa(n,m)}. 

A positive element  in the (spatial or maximal) tensor product $\cA \otimes \cB$ is entangled if it is not separable, i.e., belongs to the closed cone $\cA^+ \otimes \cB^+$ generated by elementary positive elements. In the finite dimensional case, entanglement of positive elements (of trace 1) and entanglement of states is the same (via density matrices), but this is no longer the case for (infinite dimensional) \Cs s. We consider both kinds of entanglement, and establish permanence properties of entanglement with respect to subalgebras and quotients. 

We remark that the maximal tensor product $K_1 \otimes^* K_2$  of two \emph{finite dimensional} convex compact sets $K_1$ and $K_2$ is bounded relative to the minimal tensor product $K_1 \otimes_* K_2$, and that this no longer holds when $K_1$ and $K_2$ are the state spaces of sufficiently non-commutatative (more precisely, non sub-homogeneous) \Cs s. We introduce a numerical constant $\kappa(\cA,\cB)$, for each pair of \Cs s $\cA$ and $\cB$, that measures the relative sizes of $S(\cA \otimes_\mx \cB) \subseteq S(\cA) \otimes^* S(\cB)$. We determine  this constant when $\cA$ and $\cB$ are matrix algebras and show in general that it is bounded from below by the minimum rank of $\cA$ and $\cB$.

As a summary of our investigations, we show in Theorem~\ref{thm:twoabelian} that the three sets $S(\cA) \otimes_* S(\cB)$, $S(\cA \otimes \cB)$ and $S(\cA) \otimes^* S(\cB)$ are equal if and only if one of $\cA$ and $\cB$ is commutative, and that none of these sets are pairwise equal otherwise. Also, $\cA^+ \otimes \cB^+$ and $(\cA \otimes \cB)^+$ are equal if and only if one of $\cA$ and $\cB$ is commutative. As a byproduct, we reprove  and put into the context of tensor products of convex compact sets and entanglement, the well-known fact that the state space of a \Cs{} is a Choquet simplex if and only if the \Cs{} is commutative.
As a consequence, this confirms Barker's conjecture for convex compact sets arising as the state space of \Cs s, and leaves open if this remains true for all (infinite dimensional) compact convex sets; this question arose in a BIRS (Granada) workshop in the spring of 2024.
The class of compact convex sets that arise as the state space of a \Cs{} is completely described by Alfsen--Hanche-Olsen--Schultz,  \cite[Corollary 8.6]{AH-OS:Acta}. 

The space of tracial states on a unital \Cs{} is always a Choquet simplex (if not empty). It follows that
the two Namioka--Phelps tensor products of the trace simplexes of two or more unital \Cs s coincide, and they are 
 shown to be the trace simplex of the (minimal or, equivalently, the maximal) tensor product of the \Cs s. As an application of this result we characterize when the trace simplex of the (minimal or maximal) tensor product of two or more \Cs s is a Bauer simplex. It was shown by Armstrong, \cite{Armstrong:Poulsen}, that the Poulsen simplex is prime, i.e.,  not a non-trivial tensor product; as a consequence, the trace simplex of a tensor product of two \Cs s is the Poulsen simplex only if one of the \Cs s has a unique trace and the trace simplex of the other is the Poulsen simplex. We include a new and elementary short proof of Armstrong's result.

The paper is organized as follows: In Section~\ref{sec:NP}, we review basic results about the Namioka--Phelps tensor products of convex compact sets, and we provide concrete examples of such tensor products. In Section~\ref{sec:states}, we analyze the minimal and the maximal tensor product of state spaces of \Cs s.  In Section~\ref{sec:traces}, we consider the tensor product of trace simplexes of  \Cs s. 

We thank Carlos Palazuelos for illuminating discussions on topics related to this paper, and Gabor Szabo, Mateusz Wasilewski,  Mizanur Rahaman, and Chi-Keung Ng for their comments to earlier versions of this paper. We also thank the anonymous referee for several valuable suggestions and for pointing out an incorrect statement about the Poulsen simplex in an earlier version of this paper. 

\section{Tensor products of compact convex sets} \label{sec:NP}

\noindent We review here some basic results about tensor products of convex compact sets, mostly taken from the Namioka--Phelps article, \cite{NP:PJM1969}. We provide  examples of tensor products of classical simplexes of relevance for this article. At the end  of the section we define infinite tensor products of simplexes, discuss their properties, and provide examples, as well.

Let $E$ be a real vector space, let $P$ be a \emph{proper generating cone} in $E$, i.e., $P+P \subseteq P$, $P \cap -P = \{0\}$, and $E = P-P$. An element $u \in P$ is an \emph{order unit} if for each $x \in E$ there exists $n \in \N$ such that $-nu \le x \le nu$ with respect to the ordering on $E$ arising from $P$: $x \le y$ if $y-x \in P$.  The state space $S(E,P,u)$ of such a triple consists of all linear maps $\varphi \colon E \to \R$ satisfying $\varphi(P) \subseteq \R^+$ and $\varphi(u)=1$. The space $S(E,P,u)$ is compact when equipped with  the weak$^*$-topology (the topology of pointwise convergence on $E$). Weak$^*$-compactness of $S(E,P,u)$ can fail when $u$ is not an order unit. 

We shall make use of the following Hahn--Banach type extension result, whose standard proof we omit: 

\begin{proposition} \label{prop:A} Let $(E,P,u)$ and $(E',P',u)$ be ordered real vector spaces with the same order unit $u$, where $E \subseteq E'$ and $P = P' \cap E$. Then each state on $(E,P,u)$ extends to a state on $(E',P',u)$.
\end{proposition}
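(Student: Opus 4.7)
The plan is to apply the Hahn--Banach theorem with a sublinear majorant built from the order unit. Since $u$ is an order unit in $A'$, for every $x \in A'$ the set $\{t \in \R : x \le t u\}$ (where $\le$ refers to the order from $P'$) is nonempty and bounded below, so
\[
p(x) \eqdef \inf\{t \in \R : tu - x \in P'\}
\]
is a well-defined sublinear functional on $A'$ (subadditivity comes from $P' + P' \subseteq P'$, positive homogeneity is immediate, and finiteness from $u$ being an order unit).

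Next I would check that $\varphi \le p$ on $A$. If $x \in A$ and $tu - x \in P'$, then $tu - x \in P' \cap A = P$, so $\varphi(x) \le t$; taking the infimum over $t$ gives $\varphi(x) \le p(x)$. By Hahn--Banach, $\varphi$ extends to a linear functional $\varphi' \colon A' \to \R$ with $\varphi'(y) \le p(y)$ for every $y \in A'$.

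It remains to check that $\varphi'$ is a state on $(A', P', u)$. Since $u \in A$, we immediately have $\varphi'(u) = \varphi(u) = 1$. For positivity, let $x \in P'$. Then $-x \le 0 \le 0 \cdot u$ with respect to $P'$, so $p(-x) \le 0$, and therefore
\[
-\varphi'(x) = \varphi'(-x) \le p(-x) \le 0,
\]
giving $\varphi'(x) \ge 0$. Thus $\varphi'$ is the desired state extension. There is no real obstacle here: the only thing to be careful about is to use the order of $A'$ (not of $A$) when defining $p$ and to invoke the compatibility hypothesis $P = P' \cap A$ precisely at the point where one verifies $\varphi \le p$ on $A$.
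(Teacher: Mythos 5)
Your proof is correct and is precisely the standard Hahn--Banach argument with the order-unit sublinear majorant that the paper deliberately omits ("whose standard proof we omit"), so there is nothing to add in substance. The only point you gloss over is that $p$ is also bounded below (hence real-valued), which needs properness of the cone and not just the order-unit property: if $tu-x\in P'$ and $x+nu\in P'$, then $(t+n)u\in P'$, and since $P'\cap(-P')=\{0\}$ and $u\neq 0$ (as $\varphi(u)=1$), this forces $t\ge -n$, so $p(x)\ge -n>-\infty$.
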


\noindent
Let $K$ be a compact convex subset of a locally convex topological vector space $V$. Let $A(K)$ denote the ordered real vector space of continuous affine functions $f \colon K \to \R$,  let $A(K)^+$ denote the positive elements in $A(K)$, and let $u$ denote the constant function $1$, which is an order unit. Consider the pairing $\langle x, f \rangle = f(x)$, for $x \in K$ and $f \in A(K)$. Each $x \in K$ induces a state $\langle x, \, \cdot \, \rangle$ on $(A(K), A(K)^+, u)$. The map $x  \mapsto \langle x, \, \cdot \, \rangle$ is an affine homeomorphism from $K$ onto $S(A(K),A(K)^+,u)$, see \cite[Theorem 7.1]{Goo:partial}. 

Let $(E,P)$ be an ordered vector space, with positive cone $P$. The dual space $(E^*,P^*)$ consists of the usual algebraic dual $E^*$ of $E$, and the set $P^*$ of positive linear functionals on $E$. In the case where $E = A(K)$, and $K$ as above is a compact convex set, then $(A(K)^*)^+$ is the set of positive linear functionals on $A(K)$, each of which is proportional to a state on $A(K)$. Hence $\varphi \in (A(K)^+)^*$ if and only if $\varphi(f) = c \,  \langle x, f \rangle$, $f \in A(K)$, for  some $x \in K$, where $c = \varphi(1) \ge 0$. 

Let $(E_1, P_1,u_1), (E_2,P_2,u_2)$ be ordered real vector spaces with order units $u_1$ and $u_2$, respectively. Let $E_1 \otimes E_2$ denote their (algebraic) tensor product. Consider the subsets $P_1 \otimes P_2$ and $P_1 \tensorhat P_2$ of $E_1 \otimes E_2$, where the former is the cone spanned by elementary tensors $x_1 \otimes x_2$, with $x_j \in P_j$, and the latter is defined by 
\begin{eqnarray*}
P_1 \tensorhat P_2 
&=& \big\{x \in E_1 \otimes E_2 : \langle x, f_1 \otimes f_2 \rangle \ge 0, \, \text{for all} \, f_1 \in P^*_1, f_2 \in P^*_2\big \}.
\end{eqnarray*} 
It is clear that $P_1 \otimes P_2 \subseteq P_1 \tensorhat P_2$, and both satisfy the axioms of a generating positive cone. Moreover, $u_1 \otimes u_2$ is an order unit with respect to both positive cones. Indeed if $x_j \in E_j$ are so that $x_j \le \alpha_j u_j$, for $j=1,2$, then, upon expanding $(\alpha_1 u_1 -x_1) \otimes (\alpha_2 u_2 -x_2) \in P_1 \otimes P_2$, we see that 
$x_1 \otimes x_2 \le \alpha_1 \alpha_2 \, u_1 \otimes u_2$ with respect to $P_1 \otimes P_2$, hence with respect to $P_1 \tensorhat P_2$.

If $K_1$ and $K_2$ are compact convex sets, then
$$
A(K_1)^+ \tensorhat A(K_2)^+ = \big\{ f \in A(K_1) \otimes A(K_2) : \langle x_1 \otimes x_2, f \rangle \ge 0, \, \text{for all} \;  x_1 \in K_1, \, x_2 \in K_2\big\}.
$$
Let $K_1 \otimes^*K_2$ and $K_1 \otimes_* K_2$ denote the maximal, respectively, the minimal tensor products as defined in Namioka--Phelps, there denoted by $K_1 \square K_2$ and $K_1 \vartriangle K_2$, respectively, and defined as follows: 
 \begin{equation} \label{eq:min-tensor}
 K_1 \otimes^* K_2 = S\big(A(K_1) \otimes A(K_2), A(K_1)^+ \otimes A(K_2)^+, u_1 \otimes u_2\big),
 \end{equation}
 \begin{equation} \label{eq:max-tensor}
K_1 \otimes_* K_2 = S\big(A(K_1) \otimes A(K_2), A(K_1)^+ \tensorhat A(K_2)^+, u_1 \otimes u_2\big).
\end{equation}

\noindent
One has  $K_1 \otimes_* K_2 \subseteq K_1 \otimes^* K_2$, since $A(K_1)^+ \otimes A(K_2)^+ \subseteq A(K_1)^+ \tensorhat A(K_2)^+$. Both sets $K_1 \otimes_* K_2$ and $K_1 \otimes^* K_2$ are convex and  compact in the 
weak$^*$-topology (the latter because $u_1 \otimes u_2$ is an order unit in both cases). By definition of the 
weak$^*$-topology, a net $(z_i)$ in $K_1 \otimes_* K_2$ (or in $K_1 \otimes^* K_2$) converges to $z$ if and only if  $\langle z_i, f_1 \otimes  f_2 \rangle \to \langle z,f_1 \otimes f_2 \rangle$, for all $f_j \in A(K_j)$, $j=1,2$.

For $x_1 \in K_1$ and $x_2 \in K_2$ define a linear functional $\langle x_1 \otimes x_2, \, \cdot \, \rangle$ on $A(K_1) \otimes A(K_2)$ by 
\begin{equation} \label{eq:x1.x2}
\langle x_1 \otimes x_2, f_1 \otimes f_2 \rangle = \langle x_1, f_1 \rangle \langle x_2, f_2 \rangle,  \qquad f_j \in A(K_j).
\end{equation}
It is easy to verify that $\langle x_1 \otimes x_2, \, \cdot \, \rangle$ belongs to $K_1 \otimes_* K_2$, and we shall denote this functional simply by $x_1 \otimes x_2$. It gives rise to a bi-affine map 
\begin{equation} \label{eq:tensor}
 \otimes \colon K_1 \times K_2 \to K_1 \otimes_* K_2 \subseteq K_1 \otimes^* K_2,  \qquad (x_1, x_2) \mapsto
x_1 \otimes x_2, \qquad x_j \in K_j.
\end{equation}

In the opposite direction we have continuous affine surjective maps
\begin{equation} \label{eq:slice}
\pi_j \colon  K_1 \otimes^* K_2 \to K_j, \qquad j=1,2,
\end{equation}
defined by $\pi_j(x) = x_j$, $j=1,2$, where
\begin{equation} \label{eq:pi_j}
\langle x, f \otimes u_2 \rangle = \langle x_1, f \rangle, \qquad \langle x, u_1 \otimes g \rangle = \langle x_2, g \rangle,
\end{equation}
whenever $f \in A(K_1)$ and $g \in A(K_2)$. Clearly, $\pi_j(x_1 \otimes x_2) = x_j$, when $x_1 \in K_1$ and $x_2 \in K_2$. For the ``converse'', we have the following:

\begin{lemma}[Lemma 1.1 in \cite{NP:PJM1969}] \label{lm:slice}
Let $x \in K_1 \otimes^* K_2$ and suppose that one of $x_1=\pi_1(x)$ and $x_2=\pi_2(x)$ is an extreme point in $K_1$, respectively, $K_2$. Then $x = x_1 \otimes x_2$.
\end{lemma}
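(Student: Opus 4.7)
The plan is to show directly that $\langle x, f \otimes g \rangle = \langle x_1, f \rangle \langle x_2, g \rangle$ for all $f \in A(K_1)$ and $g \in A(K_2)$, assuming without loss of generality that $x_1 = \pi_1(x)$ is extreme in $K_1$ (the other case is symmetric); by \eqref{eq:x1.x2} this will identify $x$ with $x_1 \otimes x_2$. Since each element of $A(K_2)$ is of the form $\alpha u_2 + \beta g$ with $0 \le g \le u_2$ (using that affine functions on the compact set $K_2$ are bounded), bilinearity reduces the task to verifying the identity for such $g$.

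Fix $g \in A(K_2)$ with $0 \le g \le u_2$, and set $\lambda := \langle x_2, g \rangle \in [0,1]$. The crucial observation is that for any $h \in A(K_2)^+$, the map $f \mapsto \langle x, f \otimes h \rangle$ is linear on $A(K_1)$ and nonnegative on $A(K_1)^+$, because $f \otimes h$ lies in the positive cone $A(K_1)^+ \otimes A(K_2)^+$ used to define $K_1 \otimes^* K_2$. Suppose first that $0 < \lambda < 1$. Then I can normalize, setting
\[
y_1(f) := \lambda^{-1} \langle x, f \otimes g \rangle, \qquad y_2(f) := (1-\lambda)^{-1}\langle x, f \otimes (u_2-g) \rangle,
\]
which are states on $(A(K_1), A(K_1)^+, u_1)$ and hence correspond via Kadison's theorem to points of $K_1$. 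The sum $\lambda y_1 + (1-\lambda) y_2$ sends $f$ to $\langle x, f \otimes u_2 \rangle = \langle x_1, f \rangle$, so equals $x_1$. Extremality of $x_1$ then forces $y_1 = x_1$, which gives $\langle x, f \otimes g \rangle = \lambda \langle x_1, f \rangle = \langle x_1, f \rangle \langle x_2, g \rangle$, as required.

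For the boundary values $\lambda \in \{0,1\}$, I would invoke the fact that a positive linear functional on an order-unit space vanishing at the order unit is identically zero: if $\varphi \ge 0$ on $A(K_1)^+$ and $\varphi(u_1) = 0$, then for any $f$ the inequalities $-n u_1 \le f \le n u_1$ give $|\varphi(f)| \le n\varphi(u_1) = 0$. Applying this to $f \mapsto \langle x, f \otimes g \rangle$ when $\lambda = 0$, and to $f \mapsto \langle x, f \otimes (u_2 - g) \rangle$ when $\lambda = 1$ (after which the product formula follows from $\langle x, f \otimes u_2\rangle = \langle x_1, f\rangle$), settles these cases too.

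There is no serious obstacle here; the hypothesis of extremality enters at exactly one point, namely to force the convex combination $\lambda y_1 + (1-\lambda) y_2 = x_1$ to be trivial. The remainder is bookkeeping with the order-unit structure on $A(K_1)$ and the definition of the inner cone that underlies $K_1 \otimes^* K_2$.
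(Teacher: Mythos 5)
Your proof is correct: the paper itself does not prove this lemma but simply cites Lemma 1.1 of Namioka--Phelps, and your argument is essentially the classical one from that source -- slicing $x$ against a fixed $0 \le g \le u_2$ to produce the positive functionals $y_1, y_2$, normalizing them into states on $(A(K_1), A(K_1)^+, u_1)$ (hence points of $K_1$ by the Kadison identification quoted in the paper), and using extremality of $x_1$ to collapse the convex combination, with the degenerate cases $\lambda \in \{0,1\}$ handled by the order-unit argument. Note that your key positivity observation only uses that $x$ is nonnegative on the inner cone $A(K_1)^+ \otimes A(K_2)^+$, which is exactly the right hypothesis for $K_1 \otimes^* K_2$, so the proof is complete as written.
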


\begin{lemma} \label{lm:affine-indep} Let $K_1$ and $K_2$ be compact convex sets, and let $F_1 \subseteq K_1$ and $F_2 \subseteq K_2$ be affinely independent subsets. Then $F_1 \otimes F_2 :=\{ x_1 \otimes x_2 : x_j \in F_j\}$ is affinely independent in $K_1 \otimes_* K_2$. 
\end{lemma}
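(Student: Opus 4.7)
The plan is to reduce to finite $F_1, F_2$, then to propagate an affine dependence among the tensors $x_i \otimes y_j$ to a dependence among the $y_j$'s for each fixed $i$ (and symmetrically), using the order units $u_1, u_2$ to convert a vanishing sum of states into the ``sum-zero'' conditions that affine independence requires.

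First I would reduce to $F_1 = \{x_1, \ldots, x_m\}$ and $F_2 = \{y_1, \ldots, y_n\}$ finite: affine independence of an arbitrary set is a property of its finite subsets, and any finite subset of $F_1 \otimes F_2$ is contained in a ``grid'' of the form $\{x_i \otimes y_j\}$ with $x_i, y_j$ drawn from finite subsets of $F_1$ and $F_2$, which themselves remain affinely independent.

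Now suppose $\sum_{i,j} c_{ij} (x_i \otimes y_j) = 0$ in $(A(K_1) \otimes A(K_2))^*$ with $\sum_{i,j} c_{ij} = 0$. By \eqref{eq:x1.x2}, evaluating at an elementary tensor $f \otimes g$ gives $\sum_{i,j} c_{ij} \langle x_i, f\rangle \langle y_j, g\rangle = 0$. Fix $g \in A(K_2)$ and set $a_i(g) := \sum_j c_{ij} \langle y_j, g\rangle$. Then $\sum_i a_i(g) \langle x_i, f\rangle = 0$ for every $f \in A(K_1)$, i.e., $\sum_i a_i(g) x_i = 0$ as a functional on $A(K_1)$. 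Taking $f = u_1$, which pairs to $1$ with every $x_i \in K_1$, yields $\sum_i a_i(g) = 0$, and the affine independence of $F_1$ then forces $a_i(g) = 0$ for every $i$ and every $g$.

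This leaves $\sum_j c_{ij} y_j = 0$ as a functional on $A(K_2)$ for each $i$; pairing with $u_2$ gives $\sum_j c_{ij} = 0$, and affine independence of $F_2$ now yields $c_{ij} = 0$ for all $i,j$. The only place where the affine (as opposed to merely linear) flavor of the hypothesis enters is in the appeal to the order units $u_1, u_2$ to produce the sum-zero conditions; beyond that, the argument is essentially a two-step Fubini, and I do not anticipate any real obstacle.
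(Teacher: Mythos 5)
Your argument is correct, and it follows the same overall template as the paper (reduce to finite $F_1,F_2$, then test the assumed vanishing combination $\sum_{i,j} c_{ij}\, x_i\otimes y_j$ against elementary tensors $f\otimes g$), but the key step is carried out differently. The paper chooses a biorthogonal family of continuous affine functions, $f_x\in A(K_1)$ and $g_y\in A(K_2)$ with $\langle x',f_x\rangle=\delta_{x,x'}$ and $\langle y',g_y\rangle=\delta_{y,y'}$ (barycentric coordinates on the finite-dimensional affine hull, extended by Hahn--Banach), so that pairing with $f_x\otimes g_y$ extracts each coefficient in one stroke. You never construct such dual functions; instead you run a two-stage ``Fubini'' argument, using $u_1\otimes g$ and $f\otimes u_2$ to manufacture the sum-zero conditions and then invoking the affine-independence hypothesis abstractly, first for $F_1$ and then for $F_2$. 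What the paper's route buys is a one-line conclusion once the biorthogonal system is in hand; what yours buys is that you only ever use the definition of affine independence together with the order units, avoiding the (small) existence argument for the separating affine functions. Note also that your proof never actually uses $\sum_{i,j}c_{ij}=0$, so, like the paper's, it in fact establishes linear independence of the functionals $x_i\otimes y_j$ in $(A(K_1)\otimes A(K_2))^*$, which is stronger than what is asserted.

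One small point you pass over silently: when you say ``affine independence of $F_1$ forces $a_i(g)=0$'', the relation $\sum_i a_i(g)\,x_i=0$ holds a priori only among the functionals $\langle x_i,\cdot\,\rangle$ on $A(K_1)$, whereas the hypothesis is affine independence of the $x_i$ inside the ambient locally convex space. These agree because $A(K_1)$ contains the constants and the restrictions of continuous linear functionals, which separate points, so an affine relation among the states pulls back to one among the $x_i$; this is exactly the Kadison-type identification $K\cong S(A(K),A(K)^+,u)$ the paper sets up beforehand, and the paper's own proof relies on the same identification, so this is a presentational remark rather than a gap.
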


\begin{proof} A subset of a convex set is affinely independent if and only if all its finite subsets are affinely independent. We may therefore assume that $F_1$ and $F_2$ are finite, as well as affinely independent. Choose $f_x \in A(K_1)$ and $g_y \in A(K_2)$, for all $x \in F_1$ and $y \in F_2$, such that $\langle x', f_x \rangle = \delta_{x,x'}$ and $\langle y', g_y \rangle = \delta_{y,y'}$, for all $x,x' \in F_1$ and all $y,y' \in F_2$. Then
$$\langle x' \otimes y' , f_x \otimes g_y \rangle = \delta_{x,x'} \cdot \delta_{y,y'},$$
thus witnessing that $F_1 \otimes F_2$ is affinely independent.
\end{proof}

\noindent For a compact convex subset $K$ of a locally convex topological space $E$, let $\aff(K) \subseteq E$ denote the affine hull of $K$, and for each $r \ge 0$, set $\aff_r(K) = \{(r+1)x - ry : x,y \in K\}$. Note that $\aff_r(K)$ is a compact convex set containing $K$, and that $\aff(K) = \bigcup_{r \ge 0} \aff_r(K)$. We say that a subset $S \subseteq \aff(K)$ is \emph{bounded relatively to $K$} if $S \subseteq \aff_r(K)$, for some $r \ge 0$. 

The dimension, $\dim(K)$, of a finite dimensional convex  set $K$ is the number of elements in a maximal affinely independent subset of $K$ minus 1. It is also equal to the covering dimension of the relative interior, $K^{\mathrm{ri}}$, of $K$, and to the (linear) dimension of $\aff(K)$. 

We include the well-known result below for lack of a good reference (see also the comment below
Proposition~\ref{prop:tensor-I}).

\begin{proposition} \label{prop:dimK} Let $K_1$ and $K_2$ be finite dimensional compact convex sets. Then
$$\dim(K_1 \otimes^* K_2) = \dim(K_1 \otimes_* K_2) = (\dim(K_1)+1)(\dim(K_2)+1)-1.$$
Moreover, $K_1 \otimes^* K_2$ is bounded relatively to $K_1 \otimes_* K_2$.
\end{proposition}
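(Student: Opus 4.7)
My plan is to establish the dimension formula by squeezing $\dim(K_1 \otimes_* K_2)$ and $\dim(K_1 \otimes^* K_2)$ between a common upper bound coming from the ambient hyperplane and a common lower bound produced by Lemma~\ref{lm:affine-indep}, and then to deduce relative boundedness from a routine compactness argument in finite-dimensional convex geometry.

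Write $n_j = \dim(K_j)$. Since $K_j$ is finite dimensional, every continuous affine function on $K_j$ is the restriction of a unique affine function on $\aff(K_j)$, whence $\dim A(K_j) = n_j+1$ and consequently $\dim\bigl(A(K_1) \otimes A(K_2)\bigr) = (n_1+1)(n_2+1)$. Both $K_1 \otimes_* K_2$ and $K_1 \otimes^* K_2$ are state spaces on this tensor product with respect to the order unit $u_1 \otimes u_2$, and therefore lie in the affine hyperplane $\{\varphi \in (A(K_1) \otimes A(K_2))^* : \varphi(u_1 \otimes u_2) = 1\}$ of dimension $(n_1+1)(n_2+1) - 1$. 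This gives the upper bound $\dim(K_1 \otimes^* K_2) \le (n_1+1)(n_2+1) - 1$.

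For the matching lower bound, I would pick maximal affinely independent subsets $F_j \subseteq K_j$, each of cardinality $n_j+1$. By Lemma~\ref{lm:affine-indep}, the product $F_1 \otimes F_2 \subseteq K_1 \otimes_* K_2$ is affinely independent and has $(n_1+1)(n_2+1)$ elements, so $\dim(K_1 \otimes_* K_2) \ge (n_1+1)(n_2+1) - 1$. Combined with the inclusion $K_1 \otimes_* K_2 \subseteq K_1 \otimes^* K_2$, both dimensions must equal $(n_1+1)(n_2+1) - 1$.

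Once the dimensions agree, the inclusion $K_1 \otimes_* K_2 \subseteq K_1 \otimes^* K_2$ forces the two convex sets to have a common affine hull $H$ (one affine subspace contained in another of the same dimension). Fix any $p$ in the relative interior of $K_1 \otimes_* K_2$ (well defined since this set is full dimensional in $H$) and an open neighborhood $U$ of $p$ in $H$ with $U \subseteq K_1 \otimes_* K_2$. Because $K_1 \otimes^* K_2$ is compact, I can choose $r \ge 0$ so large that $\tfrac{1}{r+1}(K_1 \otimes^* K_2 - p) \subseteq U - p$. Then for each $z \in K_1 \otimes^* K_2$ the point $x := \tfrac{1}{r+1}\, z + \tfrac{r}{r+1}\, p$ lies in $K_1 \otimes_* K_2$, and the identity $z = (r+1)x - rp$ exhibits $z$ as an element of $\aff_r(K_1 \otimes_* K_2)$. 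No serious obstacle arises in this plan; the only non-routine ingredient is extracting the lower bound on $\dim(K_1 \otimes_* K_2)$ from the affine independence supplied by Lemma~\ref{lm:affine-indep}, which is what prevents the minimal cone from collapsing to something of strictly smaller dimension than the maximal one.
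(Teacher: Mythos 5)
Your proof is correct and follows essentially the same route as the paper: the upper bound from the hyperplane $\{\varphi : \varphi(u_1\otimes u_2)=1\}$ in the dual of $A(K_1)\otimes A(K_2)$, the matching lower bound from Lemma~\ref{lm:affine-indep} applied to maximal affinely independent subsets, and the conclusion $K_1\otimes^* K_2 \subseteq \aff(K_1\otimes_* K_2)$ from equality of dimensions. Your boundedness argument via a fixed relative-interior point and uniform scaling is just a concrete rendering of the paper's absorbing-set-plus-compactness argument, so the two proofs coincide in substance.
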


\begin{proof} If $K$ is a finite dimensional compact convex set, then $A(K)$ has dimension equal to $\dim(K)+1$. Indeed, the linear map $A(K) \to \R^{\dim(K)+1}$ obtained by evaluating functions in $A(K)$ at any maximal affinely independent subset of $K$ is an isomorphism. 

Set $n = \dim(K_1)$ and $m= \dim(K_2)$. The real vector space $A(K_1) \otimes A(K_2)$ then has dimension $(n+1)(m+1)$. It follows that the dual space of $A(K_1) \otimes A(K_2)$ also has dimension $(n+1)(m+1)$. The hyperplane  consisting of $\varphi \in 
(A(K_1) \otimes A(K_2))^*$ satisfying $\varphi(u_1\otimes u_2)=1$ has dimension  $(n+1)(m+1)-1$, and $K_1 \otimes^* K_2$ is a convex subset of this hyperplane, and hence has dimension at most $(n+1)(m+1)-1$. Conversely, $K_1$ and $K_2$ contain affinely independent subsets $F_1$ and $F_2$ with $n+1$, respectively, $m+1$ elements. By Lemma~\ref{lm:affine-indep}, $F_1 \otimes F_2$ is an affinely independent subset of $K_1 \otimes_* K_2$, which therefore has dimension at least $(n+1)(m+1)-1$. 

As $K_1 \otimes_* K_2 \subseteq K_1 \otimes^* K_2$ and the two convex sets have the same dimension, we have 
$$K_1 \otimes^* K_2 \subseteq \aff(K_1 \otimes_* K_2) = \bigcup_{r \ge 0} \aff_r(K_1 \otimes_* K_2) = \bigcup_{r \ge 0} \aff_r(K_1 \otimes_* K_2)^{\mathrm{ri}},$$
where the last equality holds because the (relative) interior $\aff_{r+1}(K_1 \otimes_* K_2)^{\mathrm{ri}}$ contains $\aff_r(K_1 \otimes_* K_2)$.  Hence $K_1 \otimes^* K_2 \subseteq \aff_r(K_1 \otimes_* K_2)^\mathrm{ri} \subseteq \aff_r(K_1 \otimes _* K_2)$, for some $r \ge 0$, by compactness.
\end{proof}

\begin{remark} We show in Corollary~\ref{cor:unbounded} that $K_1 \otimes^* K_2$ fails to be bounded relatively to $K_1 \otimes_* K_2$, when $K_1$ and $K_2$ are the state spaces of (suitably non-commutative) \Cs s, and also that $K_1 \otimes^* K_2$ in general is not contained in $\aff(K_1 \otimes_* K_2)$.
\end{remark}

\noindent
The set of extreme points of a compact convex set $K$ is denoted by $\partial_e K$. 

\begin{proposition}[Namioka--Phelps, \cite{NP:PJM1969}] \label{prop:tensor-I} Let $K_1$ and $K_2$ be compact convex sets.
\begin{enumerate}
\item The bi-affine map $K_1 \times K_2 \to K_1 \otimes_* K_2$ is jointly continuous.
\item $\partial_e (K_1 \otimes_* K_2) = \{x_1 \otimes x_2 : x_1 \in \partial_e K_1,   x_2 \in \partial_e K_2\}$, and  the two sets $\partial_e K_1 \times \partial_e K_2$ and $\partial_e (K_1 \otimes_* K_2)$ are homeomorphic via (restriction of) the 
 map $K_1 \times K_2 \to K_1 \otimes_* K_2$.
\item  $\partial_e (K_1 \otimes_* K_2) \subseteq \partial_e (K_1 \otimes^* K_2)$.
\item $K_1 \otimes_* K_2 = \overline{\conv}\{x_1 \otimes x_2 : x_j \in K_j \} = 
\overline{\conv}\{x_1 \otimes x_2 : x_j \in \partial_e K_j\} $. 
\item The affine maps $\pi_j \colon K_1 \otimes^* K_2 \to K_j$ are continuous and surjective, and remain surjective when restricted to $K_1 \otimes_* K_2$.
\end{enumerate}
\end{proposition}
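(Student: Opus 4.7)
My plan is to prove the five items in the order (i), (v), (iv), (ii), (iii), so that each part may draw on the preceding ones.

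Part (i) is immediate from the definition of the weak$^*$ topology on $K_1 \otimes_* K_2$: since elementary tensors $f_1 \otimes f_2$ span $A(K_1) \otimes A(K_2)$, convergence is determined by pairing against them, and $\langle x_1^\alpha \otimes x_2^\alpha, f_1 \otimes f_2 \rangle = f_1(x_1^\alpha)\, f_2(x_2^\alpha)$ is jointly continuous in $(x_1^\alpha, x_2^\alpha)$ by continuity of $f_1$, $f_2$, and of multiplication on $\R$. For (v), continuity of $\pi_j$ is built into the definition, since $z \mapsto \langle z, f \otimes u_2 \rangle$ is weak$^*$ continuous for every fixed $f$; well-definedness uses $u_2 \in A(K_2)^+$ to ensure $f \otimes u_2 \in A(K_1)^+ \otimes A(K_2)^+$ whenever $f \in A(K_1)^+$, so the induced functional on $A(K_1)$ is positive and unital, hence identifies via Kadison representation with a point of $K_1$. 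Surjectivity (both on $K_1 \otimes^* K_2$ and on $K_1 \otimes_* K_2$) is witnessed by the product states: $\pi_j(x_1 \otimes x_2) = x_j$.

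The main step is (iv), and this is where the key Hahn--Banach separation enters. The inclusion $\overline{\conv}\{x_1 \otimes x_2\} \subseteq K_1 \otimes_* K_2$ is clear since $K_1 \otimes_* K_2$ is weak$^*$ closed and convex. For the reverse, if some $z \in K_1 \otimes_* K_2$ lay outside the closed convex set $C := \overline{\conv}\{x_1 \otimes x_2 : x_j \in K_j\}$, then by Hahn--Banach in the weak$^*$ topology I obtain $f \in A(K_1) \otimes A(K_2)$ and $\alpha \in \R$ with $\langle z, f\rangle > \alpha$ and $\langle x_1 \otimes x_2, f\rangle \le \alpha$ for all $x_j \in K_j$. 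Setting $g := \alpha\, u_1 \otimes u_2 - f$, the latter inequality says exactly that $g \in A(K_1)^+ \tensorhat A(K_2)^+$, so by definition of $K_1 \otimes_* K_2$, $\langle z, g\rangle \ge 0$, contradicting $\langle z, f\rangle > \alpha$. The second equality in (iv) reduces to the first via Krein--Milman applied to $K_1$ and $K_2$ separately, combined with the joint continuity from (i): any $x_j$ is a weak$^*$ limit of convex combinations of extreme points, and bi-affineness of $\otimes$ together with (i) pushes this down to $x_1 \otimes x_2$.

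With (iv) in hand, (ii) follows quickly. The map $\otimes \colon K_1 \times K_2 \to K_1 \otimes_* K_2$ is injective (since $\pi_j$ recovers its factors), continuous (by (i)), and defined on a compact space, so it is a homeomorphism onto its compact image. For the equality of extreme point sets, if $(x_1,x_2) \in \partial_e K_1 \times \partial_e K_2$ and $x_1 \otimes x_2 = \lambda z + (1-\lambda) w$ in $K_1 \otimes_* K_2$, then applying $\pi_1$ and $\pi_2$ and using extremality yields $\pi_j(z) = \pi_j(w) = x_j$; Lemma~\ref{lm:slice} then forces $z = w = x_1 \otimes x_2$. Conversely, for $z \in \partial_e(K_1 \otimes_* K_2)$, Milman's theorem applied to (iv), whose generating set is the compact image of $\otimes$, gives $z = x_1 \otimes x_2$; if one of the $x_j$ had a nontrivial convex decomposition, bi-affineness would yield one for $z$ as well, contradicting extremality. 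Finally (iii) uses the very same argument inside $K_1 \otimes^* K_2$: by (ii) every extreme point of $K_1 \otimes_* K_2$ has the form $x_1 \otimes x_2$ with $x_j \in \partial_e K_j$, and the slice lemma combined with the extremality of the $x_j$ shows that such a product remains extreme in the larger set $K_1 \otimes^* K_2$. The main obstacle is the Hahn--Banach step in (iv), where one must carefully identify the separating functional as living in $A(K_1) \otimes A(K_2)$ (weak$^*$ continuity) and then translate positivity on elementary product states into membership in the outer cone $A(K_1)^+ \tensorhat A(K_2)^+$.
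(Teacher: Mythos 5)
Your proof is correct, but it is organized quite differently from the paper's. The paper disposes of this proposition almost entirely by citation: items (i) and (v) are read off from \eqref{eq:x1.x2} and \eqref{eq:pi_j}, the identification $\partial_e K_1 \times \partial_e K_2 = \partial_e(K_1 \otimes_* K_2) \subseteq \partial_e(K_1 \otimes^* K_2)$ is quoted from Theorem 2.3 of Namioka--Phelps, and (iv) is then deduced from the extreme-point description via Krein--Milman. You reverse the logical order: you first prove (iv) directly, by separating a hypothetical $z \in (K_1 \otimes_* K_2) \setminus \overline{\conv}\{x_1 \otimes x_2\}$ with a weak$^*$-continuous functional $\langle \,\cdot\,, f\rangle$, $f \in A(K_1)\otimes A(K_2)$, and observing that $\alpha\, u_1 \otimes u_2 - f$ then lies in the outer cone $A(K_1)^+ \tensorhat A(K_2)^+$, which contradicts $z$ being a state for that cone; this is in essence the argument behind the cited Namioka--Phelps theorem, so you are re-proving the black box rather than invoking it. You then recover (ii) and (iii) from (iv) using Milman's partial converse to Krein--Milman (legitimate here because $\{x_1 \otimes x_2 : x_j \in K_j\}$ is the continuous image of the compact set $K_1 \times K_2$, hence closed) together with Lemma~\ref{lm:slice} and the projections $\pi_j$; note that your extremality argument for $x_1 \otimes x_2$, carried out with $z,w \in K_1 \otimes^* K_2$, simultaneously yields (iii). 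What the paper's route buys is brevity, at the cost of relying on the external reference; what yours buys is a self-contained proof (modulo the slice lemma and standard Krein--Milman/Milman theorems) that makes visible where the duality between the inner cone $A(K_1)^+ \otimes A(K_2)^+$ and the outer cone $A(K_1)^+ \tensorhat A(K_2)^+$ actually enters, namely in the Hahn--Banach step for (iv).
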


\noindent We shall sometimes rephrase (ii) by saying that $\partial_e (K_1 \otimes_* K_2) =  \partial_e K_1 \times \partial_e K_2$, identifying the right-hand set with 
$\{x_1 \otimes x_2 : x_1 \in \partial_e K_1,   x_2 \in \partial_e K_2\}$.

\begin{proof} Item (i) follows from \eqref{eq:x1.x2}. Items (ii) and (iii) are proved in  \cite[Theorem 2.3]{NP:PJM1969}. The part about the sets $\partial_e K_1 \times \partial_e K_2$ and $\partial_e (K_1 \otimes_* K_2)$ being homeomorphic follows from (i) and continuity of the maps in \eqref{eq:slice}.  Item (iv) follows from (iii). Finally, (v) follows from \eqref{eq:pi_j}.
\end{proof}

\noindent Namioka and Phelphs raise in  \cite{NP:PJM1969} the question if $K_1 \otimes_* K_2$ is always a face in $K_1 \otimes^* K_2$ (because of (iii) above). By Proposition~\ref{prop:dimK} in combination with (iii) above, if $K_1$ and $K_2$ are finite dimensional, then $K_1 \otimes_* K_2$ is  a face in $K_1 \otimes^* K_2$ only if the two sets are equal. This is likely to hold also in general.

\begin{theorem}[Namioka--Phelps, \cite{NP:PJM1969}] \label{thm:N-P}
Let $K$ be a compact convex set. Let $\square$ denote the square (in $\R^2$). The following are equivalent:
\begin{enumerate}
\item $K$ is a Choquet simplex,
\item $K \otimes_* K' = K \otimes^* K'$, for all compact convex sets $K'$,
\item $K \otimes_* \square = K \otimes^* \square$.
\end{enumerate}
\end{theorem}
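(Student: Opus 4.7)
The plan is to prove the cycle $(i) \Rightarrow (ii) \Rightarrow (iii) \Rightarrow (i)$. The middle implication is immediate upon specializing $K' = \square$, so the real content lies in $(i) \Rightarrow (ii)$ and $(iii) \Rightarrow (i)$.

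For $(i) \Rightarrow (ii)$, my main tool is Edwards' characterization: $K$ is a Choquet simplex if and only if the ordered vector space $A(K)$ has the Riesz interpolation property. Since $K \otimes_* K' \subseteq K \otimes^* K'$ is automatic, the task is to show that every state $\varphi \in K \otimes^* K'$ is non-negative on the outer cone $A(K)^+ \tensorhat A(K')^+$. An element $h \in A(K) \otimes A(K')$ corresponds to a continuous affine map $\tilde h \colon K \to A(K')$ via $\tilde h(x)(y) = \langle x \otimes y, h \rangle$, and $h$ lies in $A(K)^+ \tensorhat A(K')^+$ precisely when $\tilde h(x) \in A(K')^+$ for every $x \in K$. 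When $K$ is a simplex, Riesz decomposition on $A(K)$ should allow one to approximate $\tilde h$ in the weak-$\ast$ sense by finite sums $\sum_i f_i \otimes g_i$ with $f_i \in A(K)^+$ and $g_i \in A(K')^+$; in the Bauer case this reduces, via $A(K) \cong C_\R(\partial_e K)$, to simple-function approximation of the continuous map $\partial_e K \to A(K')^+$, while the general case uses the uniqueness of the maximal boundary measure representing each $x \in K$, yielding $\varphi(h) = \lim_i \sum_j \varphi(f_j^{(i)} \otimes g_j^{(i)}) \geq 0$.

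For $(iii) \Rightarrow (i)$, I argue the contrapositive. If $K$ is not a Choquet simplex, then $A(K)$ fails Riesz decomposition: pick $f_1, f_2, g_1, g_2 \in A(K)^+$ with $f_1 + f_2 = g_1 + g_2$ admitting no refinement $\{h_{ij}\} \subset A(K)^+$ with $f_i = h_{i1} + h_{i2}$ and $g_j = h_{1j} + h_{2j}$. Writing $\square = [-1,1]^2$ with coordinate functions $a, b \in A(\square)$, I assemble
$$
h \;=\; \tfrac{1}{2}(f_1 - f_2) \otimes a + \tfrac{1}{2}(g_1 - g_2) \otimes b + C \, u_K \otimes u_\square \;\in\; A(K) \otimes A(\square),
$$
with $C>0$ large enough that $h \geq 0$ pointwise on $K \times \square$, placing $h$ in the outer cone $A(K)^+ \tensorhat A(\square)^+$. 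The point is that any expression $h = \sum_k p_k \otimes q_k$ with $p_k \in A(K)^+$ and $q_k \in A(\square)^+$ would, upon separating the $a$- and $b$-components of each $q_k$ by evaluating at the four extreme points of $\square$ (which satisfy the non-simplex relation $(1,1)+(-1,-1) = (1,-1)+(-1,1)$), produce exactly the forbidden Riesz refinement. Hence $h$ lies outside the inner cone, and Hahn--Banach separation (via Proposition~\ref{prop:A}) delivers a state $\varphi \in K \otimes^* \square \setminus K \otimes_* \square$.

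The main obstacle is the decomposition step in $(i) \Rightarrow (ii)$: passing from the scalar Riesz interpolation in $A(K)$ to a genuine decomposition of the vector-valued affine map $\tilde h \colon K \to A(K')^+$. In the Bauer case this is routine uniform approximation on the compact set $\partial_e K$, but for a general Choquet simplex one needs Choquet's boundary measure machinery together with a careful limiting argument, which is the technical heart of the Namioka--Phelps proof in \cite{NP:PJM1969}.
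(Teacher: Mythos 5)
First, note that the paper does not prove this theorem at all: it is quoted from Namioka--Phelps \cite{NP:PJM1969}, so your proposal can only be judged on its own terms. Your reduction (ii) $\Rightarrow$ (iii) and the general outline of (i) $\Rightarrow$ (ii) are in the right spirit, but the argument you give for (iii) $\Rightarrow$ (i) has a genuine gap, and in fact the witness $h$ you construct does not work. The element $u_K \otimes u_\square$ is an order unit for the \emph{inner} cone $A(K)^+ \otimes A(\square)^+$: writing any $x \in A(K)$ as $x = (x+\|x\|u_K) - \|x\|u_K$ and similarly for elements of $A(\square)$, one checks that $z + C\, u_K \otimes u_\square$ lies in the algebraic inner cone for every $z$ once $C$ is large. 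So choosing ``$C$ large enough'' places your $h$ \emph{inside} the inner cone, and your claim that every decomposition $h = \sum_k p_k \otimes q_k$ with $p_k \in A(K)^+$, $q_k \in A(\square)^+$ forces a Riesz refinement must fail. Indeed, matching coefficients with $q_k = \alpha_k + \beta_k a + \gamma_k b$ only yields $\sum_k \beta_k p_k = \tfrac12(f_1-f_2)$, $\sum_k \gamma_k p_k = \tfrac12(g_1-g_2)$ and $\sum_k \alpha_k p_k = C\,u_K$; the last identity carries no information about $f_1+f_2$, so no refinement of $f_1+f_2 = g_1+g_2$ can be extracted. Two further points: Proposition~\ref{prop:A} is an extension, not a separation, result, and even a correct witness $h$ outside the inner cone would not immediately give a state in $K\otimes^*\square$ negative at $h$ --- since the inner cone need not be Archimedean one needs $h + \epsilon\, u_K\otimes u_\square$ outside the inner cone for some $\epsilon>0$.

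The standard (and Namioka--Phelps-style) argument for (iii) $\Rightarrow$ (i) works on the dual side, where it is robust: if $K$ is not a simplex, the cone of positive linear functionals on $A(K)$ fails Riesz decomposition, so there are $\mu_1,\mu_2,\nu_1,\nu_2 \ge 0$ with $\mu_1+\mu_2 = \nu_1+\nu_2$ admitting no refinement. Define $\Phi$ on $A(K)\otimes A(\square)$ by $\Phi(p\otimes 1) = (\mu_1+\mu_2)(p)$, $\Phi(p\otimes a) = (\mu_1-\mu_2)(p)$, $\Phi(p\otimes b) = (\nu_1-\nu_2)(p)$ (normalized). Since the extreme rays of $A(\square)^+$ are spanned by $1\pm a$ and $1\pm b$, positivity of $\Phi$ on the inner cone amounts to $(\mu_1+\mu_2)\pm(\mu_1-\mu_2) \ge 0$ and $(\mu_1+\mu_2)\pm(\nu_1-\nu_2) \ge 0$, which is immediate; on the other hand, membership of $\Phi$ in $K\otimes_*\square$ (the closed convex hull of product states) produces, after a weak$^*$ compactness argument for the four pieces, exactly a refinement of $\mu_1+\mu_2 = \nu_1+\nu_2$, a contradiction. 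Finally, in (i) $\Rightarrow$ (ii) your phrase ``approximate $\tilde h$ in the weak-$*$ sense'' is the wrong mode of approximation: states of the inner-cone order structure need not be continuous for such a topology, so what one must show is that $h + \epsilon\, u_K\otimes u_{K'}$ lies in the \emph{algebraic} inner cone for every $\epsilon>0$ (order-unit approximation), which is precisely the Riesz/Lazar decomposition step you acknowledge you have not carried out.
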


\noindent It remains an open problem, credited to G.\ P. Barker, \cite{Barker:tensor} (there stated for finite dimensional cones), if one can replace the square in (iii) above with any other compact convex set which is not a Choquet simplex:

\begin{conjecture}[Barker] \label{conj:Barker}
 Let $K_1$ and $K_2$ be compact convex sets. Then $K_1 \otimes_* K_2 = K_1 \otimes^* K_2$ if and only if one of $K_1$ and $K_2$ is a Choquet simplex.
\end{conjecture}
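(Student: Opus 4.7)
The plan is to establish the non-trivial direction of the conjecture, since Theorem~\ref{thm:N-P} already shows that one of $K_1, K_2$ being a Choquet simplex implies $K_1 \otimes_* K_2 = K_1 \otimes^* K_2$. Assuming that neither $K_1$ nor $K_2$ is a Choquet simplex, we would aim to exhibit an ``entanglement witness'', i.e., a functional $\tilde\varphi \in (K_1 \otimes^* K_2) \setminus (K_1 \otimes_* K_2)$. The strategy is to reduce to the finite-dimensional case settled by Aubrun--Lami--Palazuelos--Plavala, \cite{ALPP:GAFA2021}, by extracting finite-dimensional non-simplicial compact convex subsets $L_k \subseteq K_k$ and transferring a witness for $(L_1, L_2)$ up to $(K_1, K_2)$. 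Existence of such $L_k$ is plausible: since $K_k$ is not a simplex, a Choquet-theoretic argument produces a point with two distinct representing probability measures, and after a finite-support approximation one obtains four extreme points $a_k, b_k, c_k, d_k \in K_k$ in general position with $a_k + b_k = c_k + d_k$, whose convex hull is a (non-simplicial) affine parallelogram.

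The transfer step is clean provided one has continuous affine retractions $r_k \colon K_k \to L_k$. Writing $i_k \colon L_k \hookrightarrow K_k$ for the inclusions, one forms the unital positive restriction maps $i_k^* \colon A(K_k) \to A(L_k)$ and unital positive pull-back maps $r_k^* \colon A(L_k) \to A(K_k)$, $f \mapsto f \circ r_k$, and these satisfy $i_k^* \circ r_k^* = \mathrm{id}_{A(L_k)}$ because $r_k \circ i_k = \mathrm{id}_{L_k}$. Given a witness $\varphi \in L_1 \otimes^* L_2 \setminus L_1 \otimes_* L_2$ furnished by \cite{ALPP:GAFA2021}, along with $\xi \in A(L_1)^+ \tensorhat A(L_2)^+$ satisfying $\varphi(\xi) < 0$, one sets $\tilde\varphi = \varphi \circ (i_1^* \otimes i_2^*)$ and $\tilde\xi = (r_1^* \otimes r_2^*)(\xi)$. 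A short check shows $\tilde\varphi \in K_1 \otimes^* K_2$, since $i_k^*$ is unital and positive. Moreover $\tilde\xi \in A(K_1)^+ \tensorhat A(K_2)^+$, since for $w_k \in K_k$ the pairing $\langle w_1 \otimes w_2, \tilde\xi \rangle = \langle r_1(w_1) \otimes r_2(w_2), \xi \rangle$ is non-negative because $r_k(w_k) \in L_k$. Finally, $\tilde\varphi(\tilde\xi) = \varphi((i_1^* r_1^*) \otimes (i_2^* r_2^*)(\xi)) = \varphi(\xi) < 0$, so $\tilde\varphi \notin K_1 \otimes_* K_2$.

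The main obstacle is the existence of the retractions $r_k$. In the present generality there is no reason for a finite-dimensional non-simplicial $L_k \subseteq K_k$ to be a continuous affine retract of $K_k$, and without such a retraction the lifted $\tilde\xi$ can fail to lie in the outer cone. Several avenues could be explored: (a) replace the single retraction by a net of approximate unital positive maps $s_\lambda \colon A(L_k) \to A(K_k)$ that serve as near-sections of $i_k^*$ on the finite-dimensional subspace generated by $\xi$, then extract a limit witness from compactness of $K_1 \otimes^* K_2$; (b) use Edwards-type affine selection theorems to produce a positive affine extension operator from $A(L_k)$ to $A(K_k)$; (c) emulate the authors' \Cs{}-case proof in Section~\ref{sec:states}, where a Glimm-type embedding of matrix algebras plays the role of a retraction, though such an emulation appears to need a non-commutative structure on $A(K_k)$ that is absent in general. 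It is precisely at this obstacle that Barker's conjecture remains open, and any resolution in the full generality of compact convex sets is likely to require a genuinely new idea.
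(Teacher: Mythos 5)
This statement is Conjecture~\ref{conj:Barker}, which the paper explicitly presents as an \emph{open problem}: the authors do not prove it, and only record that it is known when both $K_1$ and $K_2$ are finite dimensional (Aubrun--Lami--Palazuelos--Plavala, \cite{ALPP:GAFA2021}) and, via Theorem~\ref{thm:twoabelian}, when $K_1$ and $K_2$ are state spaces of unital \Cs s. So there is no paper proof to match your attempt against, and your proposal --- as you yourself acknowledge in its final paragraph --- is a strategy sketch rather than a proof.

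The gap you identify is real and is the decisive one. The transfer computation itself is sound: if continuous affine retractions $r_k \colon K_k \to L_k$ existed, then $\tilde\varphi = \varphi \circ (i_1^* \otimes i_2^*)$ lands in $K_1 \otimes^* K_2$ (as $i_k^*$ is unital positive), $\tilde\xi = (r_1^* \otimes r_2^*)(\xi)$ lands in $A(K_1)^+ \tensorhat A(K_2)^+$ (as the outer cone is tested only against product states $w_1 \otimes w_2$, and $r_k(w_k) \in L_k$), and $\tilde\varphi(\tilde\xi) = \varphi(\xi) < 0$ separates $\tilde\varphi$ from $K_1 \otimes_* K_2$. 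But a finite dimensional non-simplicial compact convex subset of $K_k$ is in general not a continuous affine retract of $K_k$, equivalently there is in general no unital positive section $r_k^*$ of the restriction map $i_k^*$, and without it $\tilde\xi$ has no reason to lie in the outer cone. There is also a secondary soft spot: the claim that a non-simplex $K_k$ contains four extreme points \emph{of $K_k$} spanning a parallelogram is not justified (non-uniqueness of representing measures produces measures, not finitely supported ones, and a finite-support approximation need not preserve the exact affine relation $a_k + b_k = c_k + d_k$), though one could likely settle for a finite dimensional non-simplicial compact convex subset without insisting its vertices be extreme in $K_k$. Your options (a)--(c) are reasonable things to try, but none is carried out, and the paper's own \Cs-case argument relies essentially on Glimm-type embeddings of $CM_n$ and on (approximate) conditional expectations --- structure with no analogue for general compact convex sets. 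In short: the proposal correctly reduces the problem to a transfer principle that is itself as hard as the conjecture, and the conjecture remains open.
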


\noindent Barker's conjecture was answered in the affirmative by Aubrun--Lami--Palazuelos--Plavala in \cite{ALPP:GAFA2021} in the case where both $K_1$ and $K_2$ are finite dimensional, see also \cite{Bruyn:cones}. 
The validity of Barker's conjecture in the infinite dimensional case was raised as open question by Aubrun at a BIRS (Granada) workshop in May 2024.

We shall henceforth let $K_1 \otimes K_2$ denote the (unique) tensor product of $K_1$ and $K_2$ when $K_1$ and $K_2$ are compact convex sets of which at least one is a Choquet simplex.

\begin{theorem}[Lazar, Davis--Vincent--Smith, Namioka--Phelps] \label{thm:simplex-tensor}
Let $K_1$ and $K_2$ be compact convex sets.
If $K_1$ and $K_2$ are Choquet simplexes, then so is $K_1 \otimes K_2$. Conversely, if one of $K_1 \otimes_* K_2$ and $K_1 \otimes^* K_2$ is a  Choquet simplex, then both $K_1$ and $K_2$ are  Choquet simplexes. 
\end{theorem}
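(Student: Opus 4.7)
The plan is to handle the two implications separately.

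For the forward direction, Theorem~\ref{thm:N-P} gives $K_1 \otimes_* K_2 = K_1 \otimes^* K_2 =: K_1 \otimes K_2$ whenever one of the factors is a simplex, so there is no ambiguity in the common tensor product. I would invoke the standard characterization (Kadison, Choquet--Meyer) that a compact convex set $K$ is a Choquet simplex if and only if $A(K)$ has the Riesz interpolation property, equivalently the ordered dual $(A(K)^*, (A(K)^+)^*)$ is a vector lattice. The substance of the argument is then to show that Riesz interpolation lifts from $A(K_1)$ and $A(K_2)$ to the ordered vector space $(A(K_1) \otimes A(K_2), A(K_1)^+ \tensorhat A(K_2)^+, u_1 \otimes u_2)$. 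This is proved by reducing to positive elementary tensors, using the coincidence $A(K_1)^+ \otimes A(K_2)^+ = A(K_1)^+ \tensorhat A(K_2)^+$ guaranteed in the simplex case, and applying interpolation in each factor separately. This identifies $A(K_1 \otimes K_2)$ as a simplex space, whence $K_1 \otimes K_2$ is a simplex.

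For the converse, assume that one of $K_1 \otimes_* K_2$ or $K_1 \otimes^* K_2$ is a simplex; denote it by $L$ in either case. Using the Krein--Milman theorem, pick an extreme point $x_2^0 \in \partial_e K_2$. Since the projection $\pi_2 \colon L \to K_2$ from \eqref{eq:slice} is continuous and affine and $x_2^0$ is extreme, the preimage $F := \pi_2^{-1}(\{x_2^0\})$ is a closed face of $L$. By Lemma~\ref{lm:slice} (which is stated for $K_1 \otimes^* K_2$ and therefore also applies to elements of the subset $K_1 \otimes_* K_2$), every $z \in F$ satisfies $z = \pi_1(z) \otimes x_2^0$, so $F$ coincides with the image of the continuous affine injection $K_1 \to L$, $x_1 \mapsto x_1 \otimes x_2^0$. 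This injection is a homeomorphism onto $F$, since $\pi_1$ provides a continuous left inverse. A face of a Choquet simplex is again a Choquet simplex, so $K_1$ is a simplex; swapping the roles of the two factors yields that $K_2$ is as well.

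The main obstacle is the forward direction: transferring the Riesz interpolation property to the tensor product requires careful manipulation of positive tensors in the absence of a lattice structure on $A(K_j)$ itself, and this is precisely the technical content of the work of Lazar and Davies--Vincent-Smith cited in the statement. The converse, by contrast, is essentially a bookkeeping exercise combining Lemma~\ref{lm:slice}, the fact that preimages of extreme points under affine maps are faces, and the hereditary behaviour of the simplex property under faces.
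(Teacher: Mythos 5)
The paper does not actually prove this theorem: it is recorded as a citation, with the forward implication attributed to Lazar and to Davies--Vincent-Smith, and the converse to Namioka--Phelps \cite[Proposition 2.2]{NP:PJM1969}. Measured against that, your converse argument is a correct, self-contained reconstruction of the Namioka--Phelps step: the fibre $F=\pi_2^{-1}(\{x_2^0\})$ over an extreme point $x_2^0\in\partial_e K_2$ is a closed face of $L$ (affinity of $\pi_2$ plus extremality of $x_2^0$), Lemma~\ref{lm:slice} identifies $F$ with $\{x_1\otimes x_2^0 : x_1\in K_1\}$, the map $x_1\mapsto x_1\otimes x_2^0$ is an affine homeomorphism onto $F$ because $\pi_1$ is a continuous affine left inverse, and a closed face of a Choquet simplex is again a Choquet simplex; swapping the factors treats $K_2$. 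Note that $F\ne\emptyset$ and the lemma applies equally when $L=K_1\otimes_* K_2$, since $K_1\otimes_* K_2\subseteq K_1\otimes^* K_2$ and the elementary tensors lie in the smaller set, so this half is complete.

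Your forward direction, by contrast, is a plan rather than a proof, and as sketched it has two soft spots. First, Theorem~\ref{thm:N-P} asserts equality of the compact convex sets $K_1\otimes_* K_2=K_1\otimes^* K_2$, i.e.\ of the state spaces; that does not by itself give the equality of cones $A(K_1)^+\otimes A(K_2)^+=A(K_1)^+\tensorhat A(K_2)^+$ you invoke, since equal state spaces only determine the cones up to the relevant closure/Archimedeanization. Second, $A(K_1\otimes K_2)$ is not the algebraic tensor product $A(K_1)\otimes A(K_2)$ but (via Kadison representation) its order-unit completion, so Riesz interpolation has to be transported across this completion, typically by an approximate-interpolation argument; this is precisely the technical content of Lazar and Davies--Vincent-Smith, to which you ultimately defer. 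Since the paper itself treats this implication as a citation, deferring is acceptable, but be aware that the sketch as written would not become a proof without repairing these two points.
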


\noindent The first statement of the theorem is due to Lazar, \cite{Lazar:simplex}, and Davis--Vincent-Smith, \cite{D-VS:simplex}, while the latter part is due to Namioka--Phelps, \cite[Proposition 2.10]{NP:PJM1969}. 

We say that a compact convex set is \emph{trivial} it is consists of one point.

\begin{proposition} \label{prop:elementary-tensors-dense}  Let $K_1$ and $K_2$ be compact convex sets.
The set $$\{x_1 \otimes x_2 : x_1 \in K_1, x_2 \in K_2\} \subseteq K_1 \otimes_* K_2$$ of elementary tensors in $K_1 \otimes_* K_2$ is closed and, if 
 both $K_1$ and $K_2$ are non-trivial, not equal to  the whole set $K_1 \otimes_* K_2$.
\end{proposition}

\begin{proof} Closedness follows from Proposition~\ref{prop:tensor-I} (i) and compactness of $K_1 \times K_2$. If $x,x' \in K_1$ and $y,y' \in K_2$ are such that $x \ne x'$ and $y \ne y'$, then it is routine to check that  $\frac12(x \otimes y + x' \otimes y')$ is not an elementary tensor.
\end{proof}

\noindent 
 Recall that the Poulsen simplex is the unique non-trivial  metrizable Choquet simplex with the property that its extreme boundary is dense. The result below was proved by Armstrong in \cite{Armstrong:Poulsen}. We present here a new elementary proof of this result. 

\begin{corollary}[Armstrong] \label{cor:not-Poulsen}
The Poulsen simplex is not a tensor product of two non-trivial compact convex sets.
\end{corollary}

\begin{proof} Let $K_1$ and $K_2$ be non-trivial compact convex sets. Each extreme point of $K_1 \otimes_* K_2$ is an elementary tensor by 
Proposition~\ref{prop:tensor-I} (ii), so
$$\overline{\partial_e (K_1 \otimes_* K_2)} \subseteq \{x_1 \otimes x_2 : x_j \in K_j\} \ne K_1 \otimes_* K_2,$$
by Proposition~\ref{prop:elementary-tensors-dense}. Hence $K_1 \otimes_* K_2$ cannot be the Poulsen simplex.
\end{proof}

\begin{example} \label{ex:tensor}
For each $n \ge 0$, let $\Delta_n$ denote the $n$-simplex, which is the Choquet simplex spanned by $n+1$ affinely independent points. In particular, $\Delta_0$ is the trivial simplex.
\begin{enumerate}
\item  $K \otimes \Delta_0 = K$, for all compact convex sets $K$.

\item $\Delta_{n-1} \otimes \Delta_{m-1} = \Delta_{nm-1}$, for all $n,m \ge 1$. 

\emph{Proof:} We know from Theorem~\ref{thm:simplex-tensor} that $\Delta_{n-1} \otimes \Delta_{m-1}$ is a Choquet simplex, and  $\partial_e(\Delta_{n-1} \otimes \Delta_{m-1})$ has $nm$ points by Proposition~\ref{prop:tensor-I} (ii).  \hfill $\square$

We can also deduce this without referring to Theorem~\ref{thm:simplex-tensor}, as follows: A finite dimensional compact convex set $\Delta$ is a simplex if and only if $\partial_e \Delta$ is affinely independent. Hence $\partial_e \Delta_{n-1}$ and $\partial_e \Delta_{m-1}$ are affinely independent sets with $n$, respectively, $m$ elements. It follows from Proposition~\ref{prop:tensor-I} that $\partial_e (\Delta_{n-1} \otimes \Delta_{m-1}) = \partial_e  \Delta_{n-1} \times \partial_e \Delta_{m-1}$, and Lemma~\ref{lm:affine-indep} tells us that this set is affinely independent, so $ \Delta_{n-1} \otimes  \Delta_{m-1}$ is a simplex of dimension $|\partial_e ( \Delta_{n-1} \otimes  \Delta_{m-1})| - 1 = nm-1$.

\item Let $S_1$ and $S_2$ be Choquet simplexes. Then $S_1 \otimes S_2$ is a Bauer simplex if and only if both $S_1$ and $S_2$ are Bauer simplexes, and $\partial_e (S_1 \otimes S_2)$ is homeomorphic to $\partial_e S_1 \times \partial_e S_2$. In particular, if $X$ and $X'$ are compact Hausdorff spaces, then $\mathrm{Prob}(X) \otimes \mathrm{Prob}(X') = \mathrm{Prob}(X \times X')$.

\emph{Proof:} By Proposition~\ref{prop:tensor-I}, the sets $\partial_e (S_1 \otimes S_2)$ and $\partial_e S_1 \times \partial_e S_2$ are homeomorphic, and $\partial_e S_1 \times \partial_e S_2$  is compact if and only if both $\partial_e S_1$ and $\partial_e S_2$ are compact. 
 \hfill $\square$

\end{enumerate}
\end{example}

\noindent {\bf{Infinite tensor products.}} For a sequence $(S_n)_{n \ge 1}$ of Choquet simplexes, define its infinite tensor product $\bigotimes_{k \ge 1} S_k$ to be the inverse limit
\begin{equation} \label{eq:inf-tensor}
\xymatrix{S_1  & S_1 \otimes S_2 \ar[l]_-{\pi_1} & S_1 \otimes S_2 \otimes S_3 \ar[l]_-{\pi_2} & \ar[l]_-{\pi_3} \cdots &
\bigotimes_{k \ge 1} S_k, \ar[l]}
\end{equation}
where $\pi_n \colon \big(\bigotimes_{k=1}^n S_k\big) \otimes S_{n+1} \to \bigotimes_{k=1}^n S_k$, $n \ge 1$, are the continuous affine surjections defined in \eqref{eq:slice} and \eqref{eq:pi_j}. By the definition of inverse limits, $\bigotimes_{k \ge 1} S_k$ is the set of all coherent sequences $(x_n)_{n \ge 1}$ in the product space $\prod_{n \ge 1} \bigotimes_{k=1}^n S_k$ (equipped with the product topology), i.e., $\pi_{n}(x_{n+1}) = x_n$, for all $n \ge 1$, making it a compact convex set, and moreover a simplex (as simplexes are closed under inverse limits). Denote by $\pi_{\infty,n}$ the canonical affine surjection $\bigotimes_{k \ge 1} S_k \to \bigotimes_{k=1}^n S_k$. 

For each $n \ge 1$, define also the continuous affine surjection $\pi'_n \colon \bigotimes_{k \ge 1} S_k \to S_n$ to be the composition of $\pi_{\infty,n}$ with the map $ \big(\bigotimes_{k=1}^{n-1} S_k\big) \otimes S_{n} \to S_n$ from \eqref{eq:slice} and \eqref{eq:pi_j}. Each sequence $(y_n)_{n \ge 1}$, with $y_n \in S_n$, defines an element $y= y_1 \otimes y_2 \otimes y_3 \otimes \cdots$ in  $\bigotimes_{k \ge 1} S_k$  satisfying $\pi_{\infty,n}(y) = y_1 \otimes y_2 \otimes \cdots \otimes y_n$, and $\pi'_n(y)=y_n$, for all $n \ge 1$. 

\begin{lemma} \label{lm:inftensor} For an element $x$ in $\bigotimes_{k \ge 1} S_k$, the following are equivalent:
\begin{enumerate}
\item $x \in \partial_e \big(\bigotimes_{k \ge 1} S_k\big)$,
\item $\pi_{\infty,n}(x) \in \partial_e \big(\bigotimes_{k=1}^n S_k\big)$, for all $n \ge 1$,
\item $x = x_1 \otimes x_2 \otimes x_3 \otimes \cdots$, for some $x_n \in \partial_e S_n$.
\end{enumerate}
\end{lemma}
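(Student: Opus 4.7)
My plan is to verify the equivalences by proving the chain (iii) $\Rightarrow$ (i), (iii) $\Leftrightarrow$ (ii), and (i) $\Rightarrow$ (ii). The first three implications reduce quickly to facts already available from Proposition 2.5 and the inverse limit structure; the implication (i) $\Rightarrow$ (ii) will be the main obstacle and requires a lifting property specific to tensor products of Choquet simplexes.

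For (iii) $\Rightarrow$ (ii), I iterate Proposition 2.5 (ii) on the decomposition $\bigotimes_{k=1}^{n} S_k = \bigl(\bigotimes_{k=1}^{n-1} S_k\bigr) \otimes S_n$ to characterize the extreme points of $\bigotimes_{k=1}^n S_k$ as precisely the simple tensors $z_1 \otimes \cdots \otimes z_n$ with $z_j \in \partial_e S_j$. Since $\pi_{\infty,n}(x_1 \otimes x_2 \otimes \cdots) = x_1 \otimes \cdots \otimes x_n$, the implication is immediate. For (ii) $\Rightarrow$ (iii), I set $x_j := \pi'_j(x)$ and use the same description to write $\pi_{\infty,n}(x) = z_1^{(n)} \otimes \cdots \otimes z_n^{(n)}$ with $z_j^{(n)} \in \partial_e S_j$; the defining formulas \eqref{eq:x1.x2}--\eqref{eq:pi_j} show that slicing a simple tensor returns its factors, so $z_j^{(n)} = \pi'_j(x) = x_j$ independently of $n$. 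Coherence in the inverse limit then gives $x = x_1 \otimes x_2 \otimes \cdots$. Finally, (iii) $\Rightarrow$ (i) is a direct check: if $x = \lambda a + (1-\lambda) b$ with $a, b \in \bigotimes_{k\ge 1} S_k$ and $\lambda \in (0,1)$, then for each $n$ the identity $\pi_{\infty,n}(x) = \lambda \pi_{\infty,n}(a) + (1-\lambda) \pi_{\infty,n}(b)$ combined with extremality of $\pi_{\infty,n}(x)$ (by the implication just proved) forces $\pi_{\infty,n}(a) = \pi_{\infty,n}(b) = \pi_{\infty,n}(x)$ for every $n$; by the inverse limit property, $a = b = x$.

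The remaining implication (i) $\Rightarrow$ (ii) I prove by contraposition, and it hinges on the following lifting lemma: for Choquet simplexes $K$ and $S$, and the slice map $\pi_1 \colon K \otimes S \to K$, any decomposition $\pi_1(w) = \lambda u + (1-\lambda) v$ with $u, v \in K$ and $\lambda \in (0,1)$ lifts to a decomposition $w = \lambda u' + (1-\lambda) v'$ in $K \otimes S$ with $\pi_1(u') = u$ and $\pi_1(v') = v$. I intend to establish it using Choquet theory: the unique maximal measure $\mu_w$ representing $w$ in the simplex $K \otimes S$ (existence given by Theorem 2.11) is carried by $\partial_e(K \otimes S) = \partial_e K \times \partial_e S$; its first marginal is the unique maximal measure representing $\pi_1(w)$, which, again by uniqueness in the simplex $K$, coincides with $\lambda \mu_u + (1-\lambda) \mu_v$; disintegrating $\mu_w$ over its first marginal and redistributing the weight according to $\mu_u$ and $\mu_v$ produces measures whose barycenters are the required $u'$ and $v'$.

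Granted the lemma, suppose $\pi_{\infty,n}(x)$ is not extreme and write $\pi_{\infty,n}(x) = \tfrac{1}{2}(u + v)$ with $u \ne v$. Applying the lemma inductively with $K = \bigotimes_{k=1}^{m} S_k$ and $S = S_{m+1}$ for $m \ge n$ produces $u_m, v_m \in \bigotimes_{k=1}^m S_k$ with $\pi_{\infty,m}(x) = \tfrac{1}{2}(u_m + v_m)$ and $\pi_m(u_{m+1}) = u_m$, $\pi_m(v_{m+1}) = v_m$; below level $n$ I set $u_m := (\pi_m \circ \cdots \circ \pi_{n-1})(u)$ and likewise for $v$, and one checks directly that these extend the coherent sequences and continue to average to $\pi_{\infty,m}(x)$. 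The coherent sequences $(u_m), (v_m)$ determine $\tilde u, \tilde v \in \bigotimes_{k \ge 1} S_k$ with $\pi_{\infty,n}(\tilde u) = u \ne v = \pi_{\infty,n}(\tilde v)$ and $x = \tfrac{1}{2}(\tilde u + \tilde v)$, contradicting extremality of $x$. The entire technical weight of the proof is thus concentrated in the lifting lemma, and the simplex structure of $K \otimes S$ guaranteed by Theorem 2.11 is precisely what makes the disintegration succeed.
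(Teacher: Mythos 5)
Your treatment of (iii) $\Leftrightarrow$ (ii) and of the passage from extremality of all finite-level projections to extremality of $x$ is correct and essentially the paper's own argument: extreme points of $\bigotimes_{k=1}^n S_k$ are exactly the elementary tensors of extreme points (Proposition~\ref{prop:tensor-I}\,(ii), together with Lemma~\ref{lm:slice} for the slicing bookkeeping), and coherence in the inverse limit does the rest. The gap is in (i) $\Rightarrow$ (ii), which, as you say yourself, carries the entire technical weight: it rests on a decomposition-lifting lemma for the slice map $\pi_1\colon K\otimes S\to K$ that you do not actually prove. Two steps of your sketch are not justified. First, you assert that the first marginal $(\pi_1)_*\mu_w$ of the unique maximal measure representing $w$ is the unique maximal measure representing $\pi_1(w)$. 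Pushing a maximal measure forward along an affine continuous map does not in general produce a maximal measure; what makes the claim plausible here is that $\pi_1$ maps $\partial_e(K\otimes S)=\{y\otimes z: y\in\partial_e K,\ z\in\partial_e S\}$ into $\partial_e K$, and that argument is fine when the simplexes are metrizable (maximality $=$ being carried by the $G_\delta$ set of extreme points), but the $S_k$ in the lemma are arbitrary Choquet simplexes, and for non-metrizable $K\otimes S$ a maximal measure is only pseudo-concentrated on the extreme boundary, so this step needs a genuine argument or an added metrizability hypothesis the lemma does not have. Second, ``disintegrating $\mu_w$ over its first marginal'' requires regular conditional probabilities, again a metrizability-type assumption; this particular step is repairable without disintegration (from $\lambda\mu_u\le(\pi_1)_*\mu_w$ take a Radon--Nikodym density $g\le 1$, set $\mu':=(g\circ\pi_1)\,\mu_w$, and pass to barycenters), but the maximality-of-the-marginal issue remains.

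All of this machinery is avoidable, and that is the route the paper takes for (i) $\Rightarrow$ (ii): identify $\bigotimes_{k\ge1}S_k$ with $\bigl(\bigotimes_{k=1}^n S_k\bigr)\otimes\bigl(\bigotimes_{k\ge n+1}S_k\bigr)$, note that by Proposition~\ref{prop:tensor-I}\,(ii) every extreme point of this tensor product has the form $z_1\otimes z_2$ with $z_j$ extreme in the respective factor, and observe that $z_1=\pi_{\infty,n}(x)$; no measure theory and no appeal to the simplex structure from Theorem~\ref{thm:simplex-tensor} is needed for this implication. If you prefer to keep your contrapositive inverse-limit strategy, you must either prove the lifting lemma in the stated generality---in particular that boundary measures push forward to boundary measures under $\pi_1$---or restrict to metrizable simplexes, which would prove less than the lemma claims.
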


\begin{proof} (ii) $\Rightarrow$ (i).  Suppose that (ii) holds and write $x = ty+(1-t)z$ with $y,z \in  \bigotimes_{k \ge 1} S_k$ and $0 < t < 1$. Then $\pi_{\infty,n}(x) = \pi_{\infty,n}(y) = 
\pi_{\infty,n}(z)$, for all $n \ge 1$, so $x=y=z$.

(i) $\Rightarrow$ (ii). Assume that (i) holds, and let $n \ge 1$. Set 
$K_1 = \bigotimes_{k=1}^n S_k$ and $K_2 = \bigotimes_{k \ge n+1} S_k$. By Proposition~\ref{prop:tensor-I}, each extreme point $x$ of $K_1 \otimes K_2$ is of the form $x = z_1 \otimes z_2$, with $z_j \in \partial_e K_j$.  We may identify $K_1 \otimes K_2$ with $\bigotimes_{k \ge 1} S_k$   and $z_1$ with $\pi_{\infty,n}(x)$, so (ii) holds.

(ii) $\Leftrightarrow$ (iii).  Set $x_n = \pi'_n(x)$ and $y = x_1 \otimes x_2 \otimes x_3 \otimes \cdots$. If (ii) holds, then $\pi_{\infty,n}(x) = x_1 \otimes x_2 \otimes \cdots \otimes x_n = \pi_{\infty,n}(y)$, for all $n \ge 1$, by Proposition~\ref{prop:tensor-I} and Lemma~\ref{lm:slice}, which implies that $x=y$. It also follows from Proposition~\ref{prop:tensor-I} that (iii) $\Rightarrow$ (ii). 
\end{proof}

\begin{example} \label{ex:inftensor} Let $S_1, S_2, S_3, \dots$ be a sequence of Choquet simplexes. Then $\bigotimes_{k \ge 1}S_k$ is a Bauer simplex if and only if each $S_k$ is a Bauer simplex, in which case $$\partial_e \Big( \bigotimes_{k \ge 1}S_k \Big)= \prod_{k \ge 1} \partial_e S_k,$$
when identifying the right-hand side with the set of elementary tensors $x_1 \otimes x_2 \otimes \cdots$, with $x_k \in \partial_e S_k$.

\medskip
\noindent
\emph{Proof:}  By Lemma~\ref{lm:inftensor}, $\partial_e \Big( \bigotimes_{k \ge 1}S_k \Big)$ is the inverse limit $\varprojlim \partial_e \big(\bigotimes_{k=1}^n S_k\big)$ and therefore compact if and only if each $\bigotimes_{k=1}^n S_k$ is a Bauer simplex. By Example~\ref{ex:tensor}, this happens if and only if each $S_k$ is a Bauer simplex. If this holds, then 
$$\partial_e \Big( \bigotimes_{k \ge 1}S_k \Big)= \varprojlim \partial_e \big(\bigotimes_{k=1}^n S_k\big) = \varprojlim  \prod_{k = 1}^n \partial_e S_k = \prod_{k \ge 1} \partial_e S_k. $$

\vspace{-.6cm}
\hfill $\square$ 
\end{example}

\noindent
The example shows  that any infinite tensor product, $\bigotimes_{k\ge 1} \Delta_{n_k}$, $n_k \ge 1$, of finite dimensional simplexes,  is the Bauer simplex $S$ whose extreme boundary is the Cantor set. 

If $P$ is the Poulsen simplex, then $P \otimes P \ne P$ by Corollary~\ref{cor:not-Poulsen}. Still, $P \otimes P$ has two important properties of the Poulsen simplex, namely each metrizable Choquet simplex is homeomorphic to a face in $P \otimes P$ (because $P$ is a face in $P \otimes P$ via $P \cong P \otimes \{x\}$, for any $x \in \partial_e P$) and the set of extreme points of $P \otimes P$ has no isolated points. This raises the question if the tensor powers $P^{\otimes n}$, for $1 \le n \le \infty$ eventually are equal or all mutually distinct.

It also seems interesting to explore more generally when a Choquet simplex $S$ is \emph{prime}, i.e., not a tensor product of two non-trivial compact convex sets (which then necessarily would have to be Choquet simplexes),  and, in the opposite direction, when $S \cong S^{\otimes \infty}$. Besides the Poulsen simplex (Corollary~\ref{cor:not-Poulsen}), also the finite dimensional simplexes $\Delta_{p-1}$ are prime whenever $p$ is a prime, cf.\ Example~\ref{ex:tensor} (ii) and Theorem~\ref{thm:simplex-tensor}. On the other hand, the Bauer simplex $S$ above with extreme boundary equal to the Cantor set is equal to its own infinite tensor power.

\section{Entanglement and tensor products of state spaces of \Cs s} \label{sec:states}

\noindent In this section, we identify the Namioka--Phelps tensor products of state spaces of two \Cs s. We show that Barker's conjecture holds for this class of (typically infinite dimensional) compact convex sets. The minimal tensor product of two state spaces is identified as the set of \emph{separable} (i.e., \emph{un-entangled}) states, and we also describe the maximal tensor product in terms of positive maps.  While  state spaces of \Cs s are special among general compact convex sets, it is an interesting and deep result, due to Alfsen--Hanche-Olsen--Schultz,  \cite[Corollary 8.6]{AH-OS:Acta}, that there is a complete axiomatic description of when a compact convex sets is the state space of a \Cs. 

It is an old and well-known fact that the state space of a \Cs{} is a Choquet simplex if and only if the \Cs{} is commutative, in which case the state space is a Bauer simplex. We reprove this fact in Theorem~\ref{thm:twoabelian} below, as a byproduct of our investigations. All Bauer simplexes arise in this way, as we can take the commutative \Cs{} to be the one whose spectrum is the extreme boundary of the given Bauer simplex.    

Let $\cA$ be a unital \Cs, let $S(\cA)$ denote the convex compact set of all states on $\cA$, and let $\cA_\sa$ denote the (real vector space) of all self-adjoint elements in $\cA$. The set of extreme points of $S(\cA)$ is by definition the set of pure states on $\cA$, which are those states $\rho$ for which the GNS representation $\pi_\rho$ is irreducible, i.e., $\pi_\rho(\cA)'' = B(H)$.

It is well-known (and a consequence of Kadison's representation theorem, see \cite[II.1.8]{Alf:convex}) that we can identify $A(S(\cA))$, the affine functions on $S(\cA)$, with $\cA_\sa$ via the pairing $\langle a, \rho \rangle = \rho(a)$, for $a \in 
\cA_\sa$ and $\rho \in S(\cA)$. This pairing further identifies $A(S(\cA))^+$ with $\cA^+$ and  the order unit $u=1 \in A(S(\cA))^+$  with $1_\cA$, and hence $(A(S(\cA)), A(S(\cA))^+, 1)$ with $(\cA_\sa, \cA^+, 1_\cA)$. This leads to the following familiar identification of the state space as $S(\cA) = S(\cA_\sa, \cA^+, 1_\cA)$.

Let now $\cA$ and $\cB$ be two unital \Cs s, and denote by $\cA \odot \cB$, $\cA \otimes \cB$, and
$\cA \otimes_\mx \cB$ their algebraic, minimal, and maximal tensor product, respectively. (We here deviate from the notation in Section~\ref{sec:NP} where $\otimes$ denoted the algebraic tensor product.) The algebraic tensor product $\cA_\sa \odot \cB_\sa$ is equal to $(\cA \odot \cB)_\sa$.  By the definition of the Namioka--Phelps tensor products $\otimes_*$ and $\otimes^*$ from Section~\ref{sec:NP} and the identifications above, we get:
\begin{eqnarray} \label{eq:tensor-states-1}
S(\cA) \otimes_* S(\cB) &=& S(( \cA \odot \cB)_\sa, \cA^+ \otensorhat \,  \cB^+ , 1_\cA \otimes 1_\cB), \\
\label{eq:tensor-states-2}
S(\cA) \otimes^* S(\cB) &=& S( (\cA \odot \cB)_\sa, \cA^+ \odot \cB^+ , 1_\cA \otimes 1_\cB).
\end{eqnarray}
Also, $\cA^+ \otensorhat  \cB^+$ consists of those $x \in (\cA \odot \cB)_\sa$ for which $(\rho_\cA \otimes \rho_\cB)(x) \ge 0$, for  all states $\rho_\cA \in S(\cA)$ and $\rho_\cB \in S(\cB)$. As remarked in the beginning of Section~\ref{sec:NP}, following the definitions of the positive cones $P_1 \otimes P_2$ and $P_1 \tensorhat P_2$,
$1_\cA \otimes 1_\cB$ is an order unit for $(\cA \odot \cB)_\sa$ with respect to both positive cones $\cA^+ \otensorhat \,  \cB^+$ and $\cA^+ \odot \cB^+$.

By taking restrictions to the self-adjoint part of the algebraic tensor product $\cA \odot \cB$, we may identify the state spaces $S(\cA \otimes \cB)$ and $S(\cA \otimes_\mx \cB)$  with the set of linear maps $(\cA \odot \cB)_\sa \to \R$ that extend (necessarily uniquely) to states on $\cA \otimes \cB$, respectively, $\cA \otimes_\mx \cB$. We may in this way view $S(\cA \otimes \cB)$ as a subset of $S(\cA \otimes_\mx \cB)$. Set 
\begin{equation}
S_*(\cA \otimes \cB) = \overline{\mathrm{conv}\{ \rho_\cA \otimes \rho_\cB : \rho_\cA \in S(\cA), \rho_\cB \in S(\cB) \}},
\end{equation}
which is a weak$^*$-closed convex subset of $S(\cA \otimes \cB)$. The states in $S_*(\cA \otimes \cB)$ are commonly referred to as being \emph{separable}. States in $S(\cA \otimes \cB)$ are \emph{entangled} if they are not separable. This agrees with the usual notion of entanglement, respectively, separability, in the finite dimensional case, cf.\ Remark~\ref{rem:entangled-matrices} below.

Given unital \Cs s $\cA$ and $\cB$, consider the following inclusions of  cones,
$$\cA^+ \odot \cB^+ \subseteq (\cA \odot \cB)^+, \quad \cA^+ \otimes \cB^+ \subseteq (\cA \otimes \cB)^+, \quad \cA^+ \otimes_\mx \cB^+ \subseteq (\cA \otimes_\mx \cB)^+,$$
where $\cA^+ \odot \cB^+$ is the (algebraic) cone generated by $a \otimes b$, with $a \in \cA^+$ and $b \in \cB^+$, and 
$\cA^+ \otimes \cB^+$ and $\cA^+ \otimes_\mx \cB^+$ are the closures of $\cA^+ \odot \cB^+$ with respect to the $C^*$-norms on $\cA \otimes \cB$ and $\cA \otimes_\mx \cB$, respectively. The positive cone $(\cA \odot \cB)^+$ is the cone generated by elements of the form $x^*x$, with $x \in \cA \odot \cB$. 

In analogy with the situation where $\cA$ and $\cB$ are matrix algebras, we will think of elements in  $\cA^+ \otimes \cB^+$ and $\cA^+ \otimes_\mx \cB^+$ as being separable, while the non-separable positive elements in  $(\cA \otimes \cB)^+$ and $(\cA \otimes_\mx \cB)^+$, respectively, are entangled. However, as pointed out in Remark~\ref{rem:aodotb},  the set difference $(\cA \odot \cB)^+ \setminus \cA^+ \odot \cB^+$ can be non-empty for reasons seemingly unrelated to entanglement.

\medskip \noindent The proposition below, which is a consequence of  the Namioka--Phelps characterization of the ``minimal'' tensor product $\otimes_*$  in Proposition~\ref{prop:tensor-I}, identifies the minimal tensor product of the state spaces of two \Cs s with the set of separable states:

\begin{proposition} \label{prop:statesAB} For unital \Cs s  $\cA$ and $\cB$ we have $S(\cA)   \otimes_*  S(\cB) = S_*(\cA \otimes \cB)$.
\end{proposition}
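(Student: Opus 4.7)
The plan is to deduce this directly from Proposition~\ref{prop:tensor-I} (iv), which for arbitrary compact convex sets gives
\[
K_1 \otimes_* K_2 \;=\; \overline{\conv}\{x_1 \otimes x_2 : x_j \in K_j\}.
\]
First I would apply this with $K_1 = S(\cA)$ and $K_2 = S(\cB)$, and unwind the definition of the elementary tensor $\rho_\cA \otimes \rho_\cB$ given by \eqref{eq:x1.x2} under the identification $(A(S(\cA)),A(S(\cA))^+,1) \cong (\cA_\sa,\cA^+,1_\cA)$. Then $\rho_\cA \otimes \rho_\cB$ is the linear functional on $\cA_\sa \odot \cB_\sa$ determined by $(\rho_\cA \otimes \rho_\cB)(a \otimes b) = \rho_\cA(a)\rho_\cB(b)$. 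This functional is precisely the restriction to $\cA_\sa \odot \cB_\sa$ of the (unique) product state on $\cA \otimes \cB$, which exists by standard \Cs-algebra theory, so elementary tensors in $S(\cA) \otimes_* S(\cB)$ correspond exactly to product states in $S(\cA \otimes \cB)$.

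Next I would compare the two closures. On the left, the closure is in the weak$^*$ topology of $S(\cA) \otimes_* S(\cB)$, i.e.\ pointwise convergence on $\cA_\sa \odot \cB_\sa$; on the right, the closure defining $S_*(\cA \otimes \cB)$ is in the weak$^*$ topology of $S(\cA \otimes \cB)$, i.e.\ pointwise convergence on all of $\cA \otimes \cB$. Since any element of $\conv\{\rho_\cA \otimes \rho_\cB\}$ is a convex combination of states, it is norm-bounded by $1$ and thus extends uniquely to a functional of norm $\le 1$ on $\cA \otimes \cB$. Viewing this convex set as a subset of the unit ball of $(\cA \otimes \cB)^*$, the two topologies in question coincide there, because $\cA_\sa \odot \cB_\sa$ is norm-dense in $(\cA \otimes \cB)_\sa$ and pointwise convergence of a uniformly bounded net on a dense subspace forces pointwise convergence on the whole space.

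Combining these two steps, the closure on the left equals the closure on the right, yielding
\[
S(\cA) \otimes_* S(\cB) \;=\; \overline{\conv}\{\rho_\cA \otimes \rho_\cB : \rho_\cA \in S(\cA),\, \rho_\cB \in S(\cB)\} \;=\; S_*(\cA \otimes \cB).
\]
The only point requiring care is the topological comparison in the second paragraph (making sure we are not conflating distinct topologies); this is standard but should be spelled out, as it is the one place where the identification of $S(\cA) \otimes_* S(\cB)$ with a subset of $S(\cA \otimes \cB)$ via restriction needs to be verified to be a homeomorphism on the relevant convex hull.
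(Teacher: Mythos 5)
Your argument is correct and essentially coincides with the paper's proof: the paper likewise invokes Proposition~\ref{prop:tensor-I} (iv), identifies the elementary tensors $\rho_\cA \otimes \rho_\cB$ with restrictions of product states, and uses the restriction map to $(\cA \odot \cB)_\sa$, injective by density of $\cA \odot \cB$ in $\cA \otimes \cB$ and surjective because elementary tensors extend to product states on $\cA \otimes \cB$. The one point you flag is settled just as you suggest, and as the paper does implicitly: $S_*(\cA \otimes \cB)$ is weak$^*$ compact, so the restriction map is a continuous affine injection from a compact set into a Hausdorff space, hence a homeomorphism onto its closed image, which therefore contains (and equals) the closed convex hull of the elementary tensors, i.e.\ $S(\cA) \otimes_* S(\cB)$.
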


\begin{proof} Consider the affine continuous map
$$\gamma \colon S_*(\cA \otimes \cB) \to S(\cA) \otimes_* S(\cB)$$
obtained by restricting a state on $\cA \otimes \cB$ to $(\cA \odot \cB)_\sa$. The image of $\gamma$ is contained in the minimal tensor product $S(\cA) \otimes_* S(\cB)$,  because each state in $S_*(\cA \otimes \cB)$ is positive on $\cA^+ \otensorhat \cB^+$. The map $\gamma$ is injective because $(\cA \odot \cB)_\sa$ is dense in $(\cA \otimes \cB)_\sa$. 

We know from Proposition~\ref{prop:tensor-I} that $S(\cA)  \otimes_* S(\cB)$ is the closed convex hull of states in $S( (\cA \odot \cB)_\sa, \cA^+ \otensorhat  \cB^+ , 1_\cA \otimes 1_\cB)$ of the form $\rho_\cA \otimes \rho_\cB$, where $\rho_\cA \in S(\cA_\sa, \cA^+, 1_\cA) = S(\cA)$ and $\rho_\cB \in S(\cB_\sa, \cB^+, 1_\cB) = S(\cB)$. The functional $\rho_\cA \otimes \rho_\cB$ (defined on $(\cA \odot \cB)_\sa$) extends first to $\cA \odot \cB$, and next, 
by the definition of the minimal tensor product, to $\cA \otimes \cB$.  This shows that $\gamma$ is also surjective, and hence an affine homeomorphism.
\end{proof}

\noindent One has the following description of the states on the maximal tensor product, see also
\cite[Corollary 11]{LSSW:Schmidt}.

\begin{lemma} \label{lm:max-states}
For unital \Cs s  $\cA$ and $\cB$ we have 
$$S(\cA \otimes_\mx \cB) = S\big((\cA \odot \cB)_\sa, (\cA \odot \cB)^+, 1_\cA \otimes 1_\cB\big).$$
\end{lemma}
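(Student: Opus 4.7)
My plan is to prove the two inclusions separately; the interesting one proceeds by a GNS construction adapted to the cone $(\cA \odot \cB)^+$, and is where I expect the main work to lie.

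The inclusion $S(\cA \otimes_\mx \cB) \subseteq S(\cA_\sa \odot \cB_\sa, (\cA \odot \cB)^+, 1_\cA \otimes 1_\cB)$ is essentially immediate: every state $\omega$ on $\cA \otimes_\mx \cB$ restricts to $\cA_\sa \odot \cB_\sa = (\cA \odot \cB)_\sa$ sending $1_\cA \otimes 1_\cB$ to $1$, and the cone $(\cA \odot \cB)^+$ sits inside $(\cA \otimes_\mx \cB)^+$ by definition (the latter is the closure of the former in $\cA \otimes_\mx \cB$), so positivity is preserved.

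For the reverse inclusion, given $\varphi$ in the right-hand side, I will first extend $\varphi$ complex-linearly to a functional $\tilde \varphi \colon \cA \odot \cB \to \C$ using the decomposition $\cA \odot \cB = \cA_\sa \odot \cB_\sa + i \, \cA_\sa \odot \cB_\sa$. Since any $z^*z \in (\cA \odot \cB)^+$ lies in the self-adjoint part, $\tilde \varphi(z^*z) = \varphi(z^*z) \ge 0$, so $\tilde \varphi$ is a positive, unital, $\C$-linear functional on the unital $*$-algebra $\cA \odot \cB$. I then carry out the GNS construction on $\cA \odot \cB$ relative to $\tilde \varphi$: equip $\cA \odot \cB$ with the pre-inner product $\langle x,y \rangle = \tilde\varphi(y^*x)$, quotient by the null space $N = \{x : \tilde\varphi(x^*x)=0\}$, and complete to obtain a Hilbert space $H$ with cyclic vector $\xi$ (the class of $1_\cA \otimes 1_\cB$).

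The main obstacle is showing that left multiplication by elements of $\cA \odot \cB$ descends to bounded operators on $H$. For $a \in \cA$, write $\|a\|^2 \cdot 1_\cA - a^*a = c^*c$ with $c \in \cA$; then
$$\|a\|^2 (1_\cA \otimes 1_\cB) - (a^*a \otimes 1_\cB) \, = \, (c \otimes 1_\cB)^*(c \otimes 1_\cB) \, \in \, \cA^+ \odot \cB^+ \subseteq (\cA \odot \cB)^+,$$
so for any $x \in \cA \odot \cB$, the element $x^*\bigl(\|a\|^2 (1 \otimes 1) - (a^*a \otimes 1)\bigr)x = (cx \cdot \text{something})^*(\cdot)$ is again in $(\cA \odot \cB)^+$, and applying $\tilde \varphi$ yields $\|(a \otimes 1)\hat x\|^2 \le \|a\|^2 \, \|\hat x\|^2$. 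The identical argument on the other factor shows that left multiplication by $1 \otimes b$ is bounded by $\|b\|$. These bounded operators define $*$-representations $\pi_\cA \colon \cA \to B(H)$ and $\pi_\cB \colon \cB \to B(H)$ with commuting ranges (since $a \otimes 1$ and $1 \otimes b$ commute in $\cA \odot \cB$), and the universal property of the maximal tensor product produces a $*$-representation $\pi \colon \cA \otimes_\mx \cB \to B(H)$ with $\pi(a \otimes b) = \pi_\cA(a)\pi_\cB(b)$. The vector state $\omega(\cdot) = \langle \pi(\cdot)\xi, \xi\rangle$ is then a state on $\cA \otimes_\mx \cB$ which on $\cA_\sa \odot \cB_\sa$ agrees with $\tilde \varphi$ and hence with $\varphi$, completing the proof.
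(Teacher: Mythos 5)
Your proof is correct and follows essentially the same route as the paper: extend the state complex-linearly to $\cA \odot \cB$, perform the GNS construction there, and use the defining property of the maximal tensor product to conclude that the functional extends to a state on $\cA \otimes_\mx \cB$. Your explicit $c^*c$-argument for the boundedness of left multiplication, and the passage through the commuting representations $\pi_\cA$, $\pi_\cB$ and the universal property, simply spell out details that the paper compresses into the phrase ``$\|\pi_\rho(x)\| \le \|x\|_\mx$ by the definition of the maximal tensor product norm''.
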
 

 \begin{proof}
 We show that the  map $$\gamma' \colon S(\cA \otimes_\mx \cB) \to S((\cA \odot \cB)_\sa, (\cA \odot \cB)^+, 1_\cA \otimes 1_\cB)$$ obtained by restricting a state  on $\cA \otimes_\mx \cB$ to $(\cA \odot \cB)_\sa$,
 is an affine homeomorphism. It is clearly continuous, affine, and injective.  
 
Let $\rho \in S((\cA \odot \cB)_\sa, (\cA \odot \cB)^+, 1_\cA \otimes 1_\cB)$ and extend $\rho$ to a unital positive linear (complex valued) functional on $\cA \odot \cB$. Since $1_\cA \otimes 1_\cB$ is an order unit for $(\cA \odot \cB)_\sa$ with respect to the positive cone $\cA^+ \odot \cB^+$, it is also an order unit with respect to the larger positive cone $(\cA \odot \cB)^+$. Hence, for each $x \in \cA \odot \cB$, there is $R_x \ge 0$ such that $x^*x \le R_x^2 \, (1_\cA \otimes 1_\cB)$, i.e., $R_x^2 \, (1_\cA \otimes 1_\cB) - x^*x \in (\cA \odot \cB)^+$. It follows that $y^*x^*xy \le R_x^2 \, y^*y$, which implies that $\rho(y^*x^*xy) \le R_x^2 \,  \rho(y^*y)$, for all $y \in \cA \odot \cB$. Left-multiplication by $x$ on the pre-Hilbert space given by $\cA \odot \cB$ equipped with the inner product arising from $\rho$ is therefore bounded with bound $R_x$. 
This allows us to define the  GNS-representation $\pi_\rho$ of $\cA \odot \cB$.

By the definition of the maximal tensor product norm, $\|\pi_\rho(x)\| \le \|x\|_\mx$, for $x \in \cA \odot \cB$.
The state $\rho$ is continuous with respect to the norm $\|\pi_\rho( \cdot) \|$, and therefore also with respect to the norm $\| \, \cdot \, \|_\mx$. Hence $\rho$ extends to $\cA \otimes_\mx \cB$ which shows that $\gamma'$ is surjective.
\end{proof}

\noindent By Proposition~\ref{prop:statesAB} and Lemma~\ref{lm:max-states} above, and since $\cA^+ \odot \cB^+ \subseteq (\cA \odot \cB)^+ \subseteq \cA^+ \otensorhat \cB^+$, we obtain the following inclusions:
\begin{equation} \label{eq:inclusions}
S(\cA) \otimes_* S(\cB) = S_*(\cA \otimes \cB) \subseteq S(\cA \otimes \cB) \subseteq S(\cA \otimes_\mx \cB) \subseteq S(\cA) \otimes^* S(\cB).
\end{equation}
The first and last of the three inclusions above are strict when both $\cA$ and $\cB$ are non-commutative (see Theorem~\ref{thm:twoabelian} below). The inclusion $S(\cA \otimes \cB) \subseteq S(\cA \otimes_\mx \cB)$ is strict precisely when $\cA \otimes \cB \ne \cA \otimes_\mx \cB$.

\begin{remark}[Entangled states in finite dimensions] \label{rem:entangled-matrices}
When $\cA$ and $\cB$ are matrix algebras, one can identify all sets in \eqref{eq:inclusions} in terms of well-known sets of matrices in $\cA \otimes \cB$,  and whereby the set of separable states,  $S_*(\cA \otimes \cB) = S(\cA) \otimes_* S(\cB)$ on $\cA \otimes \cB$, corresponds to separable positive matrices, as explained below.

Recall that the vector space of $k \times k$ matrices is a Hilbert space with inner product  $\langle S,T \rangle = \Tr(T^*S)$, $S,T \in M_k(\C)$, for $k \ge 2$.  The Hilbert space structure gives a (conjugate linear) isomorphism from $M_k(\C)$ to its dual space $M_k(\C)^*$ given by $T \mapsto \langle \, \cdot \, , T \rangle$. The functional $\langle \, \cdot \, , T \rangle$ is unital if and only if $\Tr(T)=1$, is positive if and only if $T$ is positive, and
$\|\langle \, \cdot \, , T \rangle\| = \|T\|_1$, for $T \in M_k(\C)$.
For a (proper) cone $\mathcal{C} \subseteq M_k(\C)_\sa$, let $\mathcal{C}^\# \subseteq M_k(\C)_\sa$ denote its dual cone with respect to the inner product considered above. 

Let now $n,m \ge 2$ be integers. A positive element $T$ in $M_n(\C) \otimes M_m(\C)$, and its corresponding linear functional $\langle \, \cdot \, , T \rangle$, is separable if it  belongs to $M_n(\C)^+ \otimes M_m(\C)^+$, and entangled otherwise. We have the following duality of cones:
$$(M_n(\C)^+ \otimes M_m(\C)^+)^\# = M_n(\C)^+ \tensorhat M_m(\C)^+, $$
cf. \cite{3xH2001}, and $(M_n(\C) \otimes M_m(\C))^+$ is self-dual. By finite dimensionality, $M_n(\C) \odot M_m(\C) = M_n(\C) \otimes M_m(\C)$ and  $M_n(\C)^+ \odot M_m(\C)^+ = M_n(\C)^+ \otimes M_m(\C)^+$. Combining these facts with \eqref{eq:tensor-states-1} and \eqref{eq:tensor-states-2},  the convex sets
$$S(M_n(\C)) \otimes_* S(M_m(\C))  \subseteq S(M_n(\C) \otimes M_m(\C))  \subseteq S(M_n(\C)) \otimes^* S(M_m(\C)) $$
are identified (one by one) with the matrices of trace 1 in the sets
$$ M_n(\C)^+ \otimes M_m(\C)^+ \subseteq \big(M_n(\C) \otimes M_m(\C)\big)^+ \subseteq M_n(\C)^+ \tensorhat M_m(\C)^+,$$
after identifying functionals with their density matrices,
%
while $S(M_n(\C) \otimes M_m(\C)) = S(M_n(\C) \otimes_\mx M_m(\C))$, by nuclearity.
\end{remark}

\medskip
\noindent Let $\mathrm{Pos}(n,m)$ denote the set of positive linear maps $M_n(\C) \to M_m(\C)$, where $n,m \ge 2$, and let $\mathrm{UPos}(n,m)$ denote the set of unital maps in $\mathrm{Pos}(n,m)$. Let $\rho^{(m)}_0$ denote the maximally entangled state on $M_m(\C) \otimes M_m(\C)$ given by the unit vector $m^{-1/2} \sum_{j=1}^m e_j \otimes e_j$, where $(e_1, \dots, e_m)$ is an orthonormal basis for $\C^m$.  Observe that $\rho^{(m)}_0(A \otimes I_m)  = \rho^{(m)}_0(I_m \otimes A) = \tr(A)$, for all $A \in M_m(\C)$, where $\tr = m^{-1}\,\Tr$ is the normalized trace. 

The maximal tensor product of state spaces of matrix algebras has the following  description in terms of positive maps:

\begin{proposition} \label{prop:max-tensor-matrix}
Let $n,m \ge 2$. Then
\begin{eqnarray*}
S(M_n(\C)) \otimes^* S(M_m(\C)) &= &\big\{\rho^{(m)}_0 \circ (\Phi \otimes \id_m) : \Phi \in \mathrm{Pos}(n,m), \,   \tr(\Phi(I_n)) = 1\big\}\\
&=&\big\{\rho \circ (\Phi \otimes \id_m) : \Phi \in \mathrm{UPos}(n,m), \,  \rho \in S(M_m(\C) \otimes M_m(\C))  \big\}.
\end{eqnarray*}
\end{proposition}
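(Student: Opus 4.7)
The plan is to establish both equalities via a Choi--Jamio\l{}kowski style bijection between linear maps $\Phi \colon M_n(\C) \to M_m(\C)$ and linear functionals on $M_n(\C) \otimes M_m(\C)$. I will consider the map $\mathcal{J}(\Phi) := \rho_0^{(m)} \circ (\Phi \otimes \id_m)$, which on elementary tensors acts by $\mathcal{J}(\Phi)(a \otimes b) = \rho_0^{(m)}(\Phi(a) \otimes b) = m^{-1}\Tr(\Phi(a) b^T)$. Non-degeneracy of the trace pairing forces $\mathcal{J}$ to be injective, and a dimension count ($n^2 m^2$ on both sides) gives bijectivity.

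For the first equality, I will verify that $\mathcal{J}$ restricts to a bijection between $\{\Phi \in \mathrm{Pos}(n,m) : \tr(\Phi(I_n)) = 1\}$ and $S(M_n(\C)) \otimes^* S(M_m(\C))$. Unitality of $\mathcal{J}(\Phi)$ corresponds to the normalization via $\mathcal{J}(\Phi)(I_n \otimes I_m) = \rho_0^{(m)}(\Phi(I_n) \otimes I_m) = \tr(\Phi(I_n))$. Positivity on the cone $M_n(\C)^+ \odot M_m(\C)^+$ reduces, using the formula above, to requiring $\Tr(\Phi(a) b^T) \ge 0$ for all $a, b \ge 0$; since transposition preserves positivity, this is equivalent to $\Phi$ being positive. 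Combined with the identification of $S(M_n(\C)) \otimes^* S(M_m(\C))$ from Remark~\ref{rem:entangled-matrices}, this yields the first equality.

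For the second equality, I will handle both inclusions. The inclusion $(\supseteq)$ is a direct check: for $\Psi \in \mathrm{UPos}(n,m)$ and $\rho \in S(M_m(\C) \otimes M_m(\C))$, the functional $\rho \circ (\Psi \otimes \id_m)$ is unital because $\Psi(I_n) = I_m$, and positive on $M_n(\C)^+ \odot M_m(\C)^+$ because $(\Psi \otimes \id_m)(a \otimes b) = \Psi(a) \otimes b \ge 0$ whenever $a, b \ge 0$; by the first equality it therefore belongs to the left-hand side. For $(\subseteq)$, given $\Phi$ with $A := \Phi(I_n)$ invertible, I will use the factorization $\Phi(X) = A^{1/2} \Psi(X) A^{1/2}$, with $\Psi(X) := A^{-1/2} \Phi(X) A^{-1/2}$ lying in $\mathrm{UPos}(n,m)$. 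Setting $\rho(Z) := \rho_0^{(m)}\bigl((A^{1/2} \otimes I_m) Z (A^{1/2} \otimes I_m)\bigr)$ yields a state on $M_m(\C) \otimes M_m(\C)$ (its normalization uses $\rho_0^{(m)}(A \otimes I_m) = \tr(A) = 1$), and a direct computation gives $\mathcal{J}(\Phi) = \rho \circ (\Psi \otimes \id_m)$.

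The main obstacle is the degenerate case in which $\Phi(I_n)$ fails to be invertible. I will resolve this by approximation: setting $\tilde\Phi_\varepsilon := (1+\varepsilon)^{-1}(\Phi + \varepsilon \Psi_0)$, where $\Psi_0(X) := n^{-1}\Tr(X)\, I_m \in \mathrm{UPos}(n,m)$, produces positive maps with $\tilde\Phi_\varepsilon(I_n)$ invertible, $\tr(\tilde\Phi_\varepsilon(I_n)) = 1$, and $\tilde\Phi_\varepsilon \to \Phi$ as $\varepsilon \to 0$. Compactness of $\mathrm{UPos}(n,m) \times S(M_m(\C) \otimes M_m(\C))$ together with continuity of $(\Psi, \rho) \mapsto \rho \circ (\Psi \otimes \id_m)$ imply that the right-hand side of the second equality is closed in the weak$^*$ topology, so density of the invertible case upgrades to the full inclusion.
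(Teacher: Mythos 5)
Your proof is correct, and it takes a route that differs from the paper's in both halves. For the first equality, the paper identifies a functional $\phi \in S(M_n(\C)) \otimes^* S(M_m(\C))$ with a density matrix $T \in M_n(\C)^+ \tensorhat M_m(\C)^+$ (via the dual-cone identity of Remark~\ref{rem:entangled-matrices}), invokes Jamio\l{}kowski's theorem to write $T$ as the Choi matrix of a positive map, and then passes to the Hilbert--Schmidt adjoint to land on the stated form $\rho_0^{(m)} \circ (\Phi \otimes \id_m)$. You instead work directly with the correspondence $\Phi \mapsto \rho_0^{(m)} \circ (\Phi \otimes \id_m)$, using the identity $\rho_0^{(m)}(X \otimes Y) = m^{-1}\Tr(XY^T)$ to read off that unitality is $\tr(\Phi(I_n))=1$ and that positivity on the cone $M_n(\C)^+ \odot M_m(\C)^+$ is exactly positivity of $\Phi$ (via self-duality of $M_m(\C)^+$ under the trace pairing, which also forces $\Phi$ to be Hermitian-preserving -- a point you gloss over but which is standard). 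This is in effect a self-contained reproof of the Jamio\l{}kowski correspondence, avoiding the external citations; the paper's version makes the link to entanglement witnesses and Choi/Jamio\l{}kowski matrices explicit, which it reuses later (e.g.\ in Remark~\ref{rem:S} and Theorem~\ref{thm:twoabelian}). For the second equality, both arguments use the same conjugation trick $\Phi(\cdot) = \Phi(I_n)^{1/2}\,\Psi(\cdot)\,\Phi(I_n)^{1/2}$, but the paper treats a non-invertible $\Phi(I_n)$ head-on, using its range projection, the relative inverse $R$, and an auxiliary unital positive map into the complementary corner, whereas you handle only the invertible case directly and then remove the invertibility hypothesis by perturbing with $\Psi_0(X) = n^{-1}\Tr(X)I_m$ and using that $\{\rho \circ (\Psi \otimes \id_m)\}$ is compact (hence closed), since $\mathrm{UPos}(n,m) \times S(M_m(\C)\otimes M_m(\C))$ is compact and the assignment is continuous. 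Both resolutions are valid; the paper's is constructive and purely algebraic, while yours trades the corner construction for a short topological argument that is available here because everything is finite dimensional.
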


\begin{proof} If $\Phi \colon M_n(\C) \to M_m(\C)$ is positive, then $\Phi \otimes \id_m$ maps $M_n(\C)^+ \otimes M_m(\C)^+$ into $M_m(\C)^+ \otimes M_m(\C)^+ \subseteq (M_m(\C) \otimes M_m(\C))^+$, so $\rho \circ (\Phi \otimes \id_m)$ is positive on $M_n(\C)^+ \otimes M_m(\C)^+$, for each state $\rho$ on $M_m(\C) \otimes M_m(\C)$. This shows that the two sets on the right-hand side are contained in $S(M_n(\C)) \otimes^* S(M_m(\C))$. 

For the converse direction,  let 

\vspace{-.2cm}
$$S^{(m)} = \sum_{i,j=1}^m E_{ij} \otimes E_{ji}, \qquad H^{(m)} = (\id_m \otimes t_m)(S^{(m)}) =  \sum_{i,j=1}^m E_{ij} \otimes E_{ij},$$ 

\vspace{-.2cm} \noindent
where $t_m$ is the transpose map on $M_m(\C)$, and $(E_{ij})$ is the set of matrix units for $M_m(\C)$. Then $\rho_0^{(m)} = m^{-1} \langle \, \cdot \, , H^{(m)}\rangle$. For a linear map $\Psi \colon M_m(\C) \to M_n(\C)$, consider its Jamiolkowski matrix $J(\Psi) = (\Psi \otimes \id_m)(S^{(m)})$ and its Choi matrix $C(\Psi) = (\Psi \otimes \id_m)(H^{(m)})$. It was shown by Jamiolkowski, \cite{Jam-1972}, that $\Psi$ is positive if and only if $J(\Psi) \in M_n(\C)^+ \tensorhat M_m(\C)^+$, which again is equivalent to $C(\Psi) = (\id_n \otimes t_m)(J(\Psi)) \in M_n(\C)^+ \tensorhat M_m(\C)^+$. 

Fix $\phi \in S(M_n(\C)) \otimes^* S(M_m(\C))$ and write  $\phi = \langle \, \cdot \, , T \rangle$, for some density matrix $T$ in $M_n(\C)^+ \tensorhat M_m(\C)^+$ with $\Tr(T) = 1$, cf.\ Remark~\ref{rem:entangled-matrices}. By Jamiolkowski's theorem and the  comments above, $T = C(\Psi)$ for some $\Psi \in \mathrm{Pos}(m,n)$. Hence, 
for $X \in M_n(\C) \otimes M_m(\C)$,
\begin{eqnarray*}
\phi(X)    &=  & \langle X, C(\Psi) \rangle = \langle X, (\Psi \otimes \id_m)(H^{(m)}) \rangle  \\
&=& \langle (\Psi^* \otimes \id_m)(X), H^{(m)} \rangle = m (\rho_0^{(m)} \circ (\Psi^* \otimes \id_m))(X).
\end{eqnarray*}
It follows that $\phi = \rho_0^{(m)} \circ (\Phi \otimes \id_m)$ with $\Phi =m \Psi^* \in \mathrm{Pos}(n,m)$, since positivity is preserved by taking adjoints. Also, $1 = \phi(I_n \otimes I_m) = \rho_0^{(m)}(\Phi(I_n) \otimes I_m) = \tr(\Phi(I_n))$.

For the  last claim,  we show that if $\Phi \in \mathrm{Pos}(n,m)$ with $\tr(\Phi(I_n))=1$, then there are $\Psi \in \mathrm{UPos}(n,m)$ and $\rho \in S(M_n(\C) \otimes M_n(\C))$ such that $\rho_0^{(m)} \circ (\Phi \otimes \id_m) = \rho \circ (\Psi \otimes \id_m)$. Let $P$ be the range projection of the positive element $\Phi(I_n)$ in $M_m(\C)$, and let $R$ be the inverse of $\Phi(I_n)^{1/2}$ relatively to $PM_m(\C)P$, i.e., $R \Phi(I_n)^{1/2} = P = \Phi(I_n)^{1/2}R$. Choose any unital positive map $\Psi_1 \colon M_n(\C) \to (I_m-P)M_m(\C)(I_m-P)$, e.g., $\Psi_1$ could be a state on $M_m(\C)$ followed by multiplication by $I_m-P$. Define
$$\Psi(A) = R\Phi(A)R + \Psi_1(A), \quad \rho(X) = \rho_0^{(m)}\big((\Phi(I_n)^{1/2} \otimes I_m)X (\Phi(I_n)^{1/2} \otimes I_m)\big),$$
for $A \in M_m(\C)$ and $X \in M_m(\C) \otimes M_m(\C)$. Then $\rho_0^{(m)} \circ (\Phi \otimes \id_m) =  \rho \circ (\Psi \otimes \id_m)$ and $\Psi \in \mathrm{UPos}(n,m)$. The functional $\rho$ is positive and $\rho(I_m \otimes I_m) = 
\rho_0^{(m)} \circ (\Phi \otimes \id_m)(I_n \otimes I_m)=1$.  Hence $\rho$ is a state on $M_m(\C) \otimes M_m(\C)$.
\end{proof}

\begin{remark} \label{rem:S} For later use we record the well-known fact that $S^{(m)}$ defined above belongs to $M_m(\C)^+ \tensorhat M_m(\C)^+$. This is a much used fact in quantum information theory, referred to by saying that $S^{(m)}$ is an \emph{entanglement witness}: If $T \in (M_m(\C) \otimes M_m(\C))^+$ with $\Tr(T) = 1$ is such that $\langle S^{(m), T} \rangle < 0$, then the state $\langle \, \cdot \, , T \rangle$ is entangled. 

One can see that $S^{(m)} \in M_m(\C)^+ \tensorhat M_m(\C)^+$ as follows: Consider the transpose map $t_m \colon M_m(\C) \to M_m(\C)$, which is positive, but not completely positive. The matrix $H^{(m)}$ defined in the proof of Proposition~\ref{prop:max-tensor-matrix} is positive.  For all states $\rho_1,\rho_2$ on $M_m(\C)$, $\rho_2 \circ t_m$ is a state on $M_m(\C)$, so $(\rho_1 \otimes \rho_2)(S^{(m)}) = (\rho_1 \otimes (\rho_2 \circ t_m))(H^{(m)}) \ge 0$.
\end{remark}

\begin{remark} \label{rem:max-positive} In general, if $\cA$ and $\cB$ are two unital \Cs s, and if $\Phi \colon \cA \to \cA_1$ and $\Psi \colon \cB \to \cB_1$ are unital positive linear maps into unital \Cs s $\cA_1$ and $\cB_1$, then $\Phi \otimes \Psi$ is a well-defined linear map $(\cA \odot \cB)_\sa \to (\cA_1 \odot \cB_1)_\sa$ which maps $\cA^+ \odot \cB^+$ into $\cA_1^+ \odot \cB_1^+ \subseteq (\cA_1 \odot \cB_1)^+$. (In general, $\Phi \otimes \Psi$ may be unbounded and will not extend to the completion $\cA \otimes_\mx \cB$.) Each state $\rho$ in $S(\cA_1 \otimes_\mx \cB_1)$ therefore defines an element $\rho \circ (\Phi \otimes \Psi)$ in $S(\cA) \otimes^* S(\cB)$. 

We do not know if all functionals in $S(\cA) \otimes^* S(\cB)$ arise in a way analogous to the one given in Proposition~\ref{prop:max-tensor-matrix}, as is the case when $\cA$ and $\cB$ are matrix algebras, or in the more general way described here. 
\end{remark}

\medskip \noindent We will show in Theorem~\ref{thm:twoabelian}  that entanglement always occurs in the tensor product of two non-commu\-ta\-tive \Cs s. This is well-known for matrix algebras, and also for general  \Cs s  in the case of entanglement of states. 
We choose here an approach towards this result, that we believe is of independent interest. Namely, entanglement phenomena, known to hold for matrix algebras, are  lifted to general non-commutative \Cs s
via Glimm's lemma, which  states that any non-commutative \Cs{} admits a sub-quotient isomorphic to $M_n(\C)$, for some $n \ge 2$. We then establish permanence results for entanglement with respect to subalgebras and quotients, cf.\ 
Propositions~\ref{prop:inclusion}, \ref{prop:entanglement-quotient} and \ref{prop:regular-positive} below. These results lead to Theorem~\ref{thm:twoabelian}.

\begin{definition} For a \Cs{} $\cA$, let $\rank(\cA) \in \N \cup \{\infty\}$ be the supremum of the set of all $n \in \N \cup \{\infty\}$ for which there exists an  irreducible representation of $\cA$ on a Hilbert space $H$ of dimension $n$. 
\end{definition}

\begin{remark} \label{rem:subhomogeneous} A \Cs{} $\cA$ is commutative if and only if $\rank(\cA)=1$. Moreover, a \Cs{}
has finite rank if and only if it is \emph{sub-homogeneous}, i.e., is a sub-\Cs{} of $M_n(C_0(X))$, for some locally compact Hausdorff space $X$ and some integer $n \ge 1$. Indeed,  $\mathrm{rank}(\cA) = \mathrm{rank}(\cA^{**})$, where $\cA^{**}$ is the bidual of $\cA$, which is a von Neumann algebra. If $ \mathrm{rank}(\cA^{**}) =n < \infty$, then $\cA^{**}$ is a  direct sum of type I$_k$ von Neumann algebras, with $1 \le k \le n$; a type I$_k$ von Neumann algebra is of the form $M_k(\cC)$, for some abelian von Neumann algebra $\cC$.
\end{remark}

\noindent For $n \ge 1$, let $CM_n  =\{f \in C([0,1], M_n): f(0) = 0\}$ denote the cone over $M_n=M_n(\C)$, and let $\cE_n$ denote the unitization of $CM_n$, which can be identified as $$\cE_n = \{f \in C([0,1], M_n): f(0) \in \C \cdot 1_n \}.$$ The lemma below is a variation of the well-known ``Glimm's lemma''. We include its proof for completeness of the exposition (and for lack of a precise reference).

\begin{lemma}[Glimm] \label{lm:Glimm} Let $\cA$ be a unital \Cs, and let $n \ge 1$ be an integer. The following conditions are equivalent:
\begin{enumerate}
\item $\rank(\cA) \ge n$,
\item there exists a non-zero \sh{} $CM_n \to \cA$,
\item there is an embedding of either $M_n$ or $CM_n$ into $\cA$,
\item there is a unital sub-\Cs{} $\cA_0$ of $\cA$ which has a quotient isomorphic to $M_n$. 
\end{enumerate} 
Items {\rm{(i)}}, {\rm{(ii)}} and {\rm{(iii)}} are equivalent also when $\cA$ is non-unital.
\end{lemma}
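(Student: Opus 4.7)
The plan is to establish (ii) $\Leftrightarrow$ (iv) in the unital case by unitization (extend $\phi \colon CM_n \to \cA$ to $\cE_n = CM_n + \C \cdot 1$ by mapping the unit to $1_\cA$, and restrict in the reverse direction), and to prove the cycle (iii) $\Rightarrow$ (ii) $\Rightarrow$ (i) $\Rightarrow$ (ii) $\Rightarrow$ (iii) for $\cA$ unital or non-unital. The implication (iii) $\Rightarrow$ (ii) is immediate: an embedding $CM_n \hookrightarrow \cA$ is itself a non-zero \sh, and an embedding $\pi \colon M_n \hookrightarrow \cA$ composed with the surjective evaluation $CM_n \ni f \mapsto f(1) \in M_n$ gives one.

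For (ii) $\Rightarrow$ (i), the image $B := \phi(CM_n)$ is a non-zero quotient of $CM_n$ and, as computed in the (ii) $\Rightarrow$ (iii) step below, is of the form $C_0(F, M_n)$ for some non-empty closed $F \subseteq (0,1]$; evaluation at any $t_0 \in F$ therefore yields a surjective \sh{} $B \twoheadrightarrow M_n$, and the pull-back of any pure state on $M_n$ is a pure state $\rho'$ on $B$. Hahn--Banach plus Krein--Milman (applied to the compact convex set of state-extensions of $\rho'$) yields a pure-state extension $\tilde\rho$ on $\cA$ (or on its unitization in the non-unital case). The GNS representation $\pi_{\tilde\rho}$ is irreducible, and on the $B$-invariant closed subspace $\overline{\pi_{\tilde\rho}(B)\,\xi_{\tilde\rho}}$ it implements the $n$-dimensional GNS representation of $\rho'$, so $\dim H_{\tilde\rho} \ge n$ and $\rank(\cA) \ge n$. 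Conversely, for (i) $\Rightarrow$ (ii), let $\pi \colon \cA \to B(H)$ be irreducible with $\dim H \ge n$ and apply Glimm's classical dichotomy to $\pi(\cA) \subseteq B(H)$: either $\cK(H) \subseteq \pi(\cA)$ (in which case $M_n \subseteq \pi(\cA) = \cA/\ker\pi$), or $\pi(\cA)$ is antiliminal and contains a copy of $CM_n$. Either case gives a non-zero \sh{} $CM_n \to \cA/\ker\pi$, and projectivity of $CM_n = C_0((0,1]) \otimes M_n$ (Loring) lifts it to a non-zero \sh{} $CM_n \to \cA$.

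Finally, for (ii) $\Rightarrow$ (iii): since $M_n$ is simple, ideals of $CM_n = C_0((0,1]) \otimes M_n$ correspond to open subsets of $(0,1]$, so $\ker\phi = \{f : f|_F = 0\}$ for a non-empty closed $F \subseteq (0,1]$ and $B \cong C_0(F, M_n)$. If $F$ has an isolated point $t_0$, then $\chi_{\{t_0\}} \otimes 1_n \in B$ is a projection with corner isomorphic to $M_n$, so $M_n \hookrightarrow B \hookrightarrow \cA$. If $F$ has no isolated points, then $\bar F \subseteq [0,1]$ is a non-empty compact perfect set which contains a Cantor subset surjecting continuously onto $[0,1]$; by Tietze this extends to a continuous surjection $\sigma \colon \bar F \twoheadrightarrow [0,1]$, and by post-composing with a V-shaped self-map of $[0,1]$ we may arrange $\sigma(0)=0$ when $0 \in \bar F$. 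The dual map embeds $C_0((0,1])$ into $C_0(F)$, whence $CM_n \hookrightarrow B \hookrightarrow \cA$. I expect the main obstacle to be this final step: the isolated-point case is easy, but producing $CM_n$ in the perfect case requires the Cantor/Tietze construction together with the observation that compact perfect subsets of $\R$ admit continuous surjections onto $[0,1]$; the other substantial input is the classical Glimm dichotomy plus projectivity of $CM_n$ used in (i) $\Rightarrow$ (ii).
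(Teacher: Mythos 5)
Your argument is correct, but at the two substantive steps it takes a genuinely different route from the paper. For (i) $\Rightarrow$ (ii) the paper argues directly and uniformly: given an irreducible representation $\pi$ on $H$ with $\dim H \ge n$, Kadison's transitivity theorem shows that $\mathcal{D}=\{a\in\cA : [\pi(a),P]=0\}$ (with $P$ a rank-$n$ projection) maps \emph{onto} $B(PH)\cong M_n$, and projectivity of $CM_n$ lifts the resulting surjection --- no case distinction and no structure theory. You instead split according to whether $\pi(\cA)\supseteq \cK(H)$ or $\pi(\cA)\cap\cK(H)=0$, and in the second case you invoke ``Glimm's classical dichotomy'' to the effect that an antiliminal algebra contains a copy of $CM_n$; that conclusion is true, but it is not an off-the-shelf statement: it has to be assembled from Glimm's (much deeper) theorem that a non-type-I algebra has a subalgebra with the CAR algebra as a quotient, an embedding of $M_n$ (hence a non-zero map of $CM_n$) into $M_{2^\infty}$, and projectivity again to lift along that quotient. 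So your route works but imports considerably heavier machinery than the transitivity argument, and the assembly should at least be indicated. For (ii) $\Rightarrow$ (iii) your dichotomy (an isolated point of $F$ gives a corner $\cong M_n$; $F$ perfect gives $CM_n$ by pulling back along a continuous surjection $\overline{F}\to[0,1]$ built from a Cantor subset and Tietze) is correct, but the Cantor/Tietze construction is avoidable: the paper simply observes that the relevant closed set is either an interval $[0,t]$, in which case the image is already isomorphic to $CM_n$, or it contains a non-empty clopen piece avoiding $0$, which yields a $C(I_0,M_n)$ direct summand and hence a unital copy of $M_n$. Your pure-state-extension argument for (ii) $\Rightarrow$ (i) (which the paper dismisses as clear) is a correct standard argument, and your treatment of (ii) $\Leftrightarrow$ (iv) by unitization/restriction matches the paper.
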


\begin{proof} (i) $\Rightarrow$ (ii). Suppose that $\rank(\cA) \ge n$ and let $\pi$ be an irreducible representation of $\cA$ on a Hilbert space $H$ of dimension at least $n$. Set $\cB = \pi(\cA)$. Let $P$ be a projection from $H$ onto an $n$-dimensional subspace $H_0$ of $H$. By Kadison's transitivity theorem, \cite[Theorem 5.4.3]{KadRin:VolI}, for each unitary $u$ on $H_0$ there is a unitary $v \in \cB$ whose restriction to $H_0$ is $u$. Necessarily, $v$ must commute with $P$, as we otherwise would have $\|v\|>1$.

Set 
$\mathcal{D} =\{a \in \cA : [\pi(a),P]=0\} \subseteq \cA$. Define $\varphi \colon \mathcal{D} \to B(H_0)$ by $\varphi(a) = \pi(a)|_{H_0}$. Then $\varphi$ is a unital \sh{} which, by the argument above, must be surjective. Identifying $B(H_0)$ with $M_n$ we get an isomorphism $M_n \to \mathcal{D}/\mathrm{ker}(\varphi)$, and hence, by composing with a surjection $CM_n \to M_n$, also a surjection $CM_n \to \mathcal{D}/\mathrm{ker}(\varphi)$. By projectivity of $CM_n$, this surjection lifts to a (non-zero) \sh{} $CM_n \to \mathcal{D}$.

(ii) $\Rightarrow$ (iii). Take a non-zero \sh{} $CM_n \to \cA$. The (non-zero) quotient of $CM_n$ by the kernel of this \sh{} is (isomorphic to) $\cB := \{f \in C(I,M_n) : f(0)=0\}$, for some (non-empty) closed subset $I$ of $[0,1]$. (The condition $f(0)=0$ is vacuous if $0 \notin I$.) By construction, $\cB$ embeds into $\cA$. 

If $I = [0,t]$ for some $0 < t \le 1$, then $\cB \cong CM_n$, and there is an embedding of $CM_n$ into $\cA$. If $I$ is not of this form, then $I$ contains (or is equal to) a non-empty clopen subset $I_0$ that does not contain $0$, and $C(I_0,M_n)$ is a direct summand of (or equal to) $\cB$. As $M_n$ embeds into $C(I_0,M_n)$, it embeds into $\cB$ and hence into $\cA$.

(iii) $\Rightarrow$ (iv). Suppose first that $CM_n$ embeds into $\cA$. Then its unitization $\cE_n$ admits a unital embedding into $\cA$, and $\cE_n$ has a quotient isomorphic to $M_n$, e.g., via point evaluation at $t=1$. If, instead, $M_n$ embeds into $\cA$, then we can take $\cA_0 = M_n$ if this embedding is unital. If not, then $\cA_0 = M_n \oplus \C$ embeds unitally into $\cA$.

(iv) $\Rightarrow$ (ii). Suppose that $\cA_0$ is a (unital) sub-\Cs{} of $\cA$ with a quotient isomorphic to $M_n$. Then $\cA_0$ has an irreducible representation of dimension $n$, so $\rank(\cA_0) \ge n$. By (i) $\Rightarrow$ (ii), this implies that there is a non-zero \sh{} $CM_n \to \cA_0 \subseteq \cA$.

(iii) $\Rightarrow$ (i) is clear. 
\end{proof}

\noindent The unital \Cs{} $\cA_0$ in (iv) above can be take to be one of the \Cs s $\cE_n$, $M_n$, or $M_n \oplus \C$. 

We shall now address the question if entangled states, or entangled positive elements, can become un-entangled  when passing to larger \Cs s or lifting from quotients. It is easy to see that the answer is no to the former for entangled states:

\begin{proposition} \label{prop:inclusion} For inclusions of unital \Cs s $\cA_0 \subseteq \cA$ and $\cB_0 \subseteq \cB$ we have the following commutative diagram, where the horizontal maps, $r$ and $r_*$, given by restriction, are surjective:
$$
\xymatrix{S_*(\cA \otimes \cB) \ar@{->>}[r]^-{r_*} \ar@{}[d]|-*[@]{\subseteq} & S_*(\cA_0 \otimes \cB_0) \ar@{}[d]|-*[@]{\subseteq} \\
S(\cA \otimes \cB) \ar@{->>}[r]^-r & S(\cA_0 \otimes \cB_0)}
$$
In particular, if $\rho_0 \in S(\cA_0 \otimes \cB_0)$ is entangled, then so is any extension  of $\rho_0$ to $\cA \otimes \cB$.
\end{proposition}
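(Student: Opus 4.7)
The plan is to prove surjectivity of the two horizontal maps separately and then derive the final claim by contrapositive. Commutativity of the diagram is automatic once both restriction maps are shown to land in the right spaces, since $r_*$ is by definition just $r$ restricted to $S_*(\cA \otimes \cB)$.

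For the bottom row, surjectivity of $r \colon S(\cA \otimes \cB) \to S(\cA_0 \otimes \cB_0)$ is a standard Hahn--Banach extension argument: $\cA_0 \otimes \cB_0$ is a unital sub-\Cs{} of $\cA \otimes \cB$ with common unit $1_\cA \otimes 1_\cB$, and any state $\sigma$ on $\cA_0 \otimes \cB_0$ is a norm-one linear functional with $\sigma(1) = 1$, hence extends by Hahn--Banach to a norm-one linear functional on $\cA \otimes \cB$ taking the unit to $1$, which is automatically a state.

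For the top row, I will rely on the fact (Proposition~\ref{prop:tensor-I}~(iv)) that $S_*(\cA \otimes \cB) = S(\cA) \otimes_* S(\cB)$ is the closed convex hull of the set of elementary (product) states $\{\rho_\cA \otimes \rho_\cB : \rho_\cA \in S(\cA),\, \rho_\cB \in S(\cB)\}$, and similarly for $\cA_0, \cB_0$. The key observations are: (a) $r$ is weak$^*$-continuous and affine, so $r(S_*(\cA \otimes \cB))$ is compact and convex; (b) $r$ sends a product state $\rho_\cA \otimes \rho_\cB$ to the product state $(\rho_\cA|_{\cA_0}) \otimes (\rho_\cB|_{\cB_0})$, since these agree on elementary tensors in $\cA_0 \odot \cB_0$; (c) by the already-established unary surjectivity $S(\cA) \twoheadrightarrow S(\cA_0)$ and $S(\cB) \twoheadrightarrow S(\cB_0)$, every product state on $\cA_0 \otimes \cB_0$ arises as such a restriction. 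Combining (a)--(c), $r(S_*(\cA \otimes \cB))$ is a weak$^*$-closed convex subset of $S(\cA_0 \otimes \cB_0)$ containing all elementary product states, hence contains $S_*(\cA_0 \otimes \cB_0)$; and it is contained in $S_*(\cA_0 \otimes \cB_0)$ because closed-convex-hull is preserved by continuous affine maps. This gives equality, i.e., surjectivity of $r_*$.

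The final assertion then follows by contrapositive: if $\rho \in S(\cA_0 \otimes \cB_0)$ admitted an extension $\tilde\rho \in S_*(\cA \otimes \cB)$, then $\rho = r(\tilde\rho) = r_*(\tilde\rho) \in S_*(\cA_0 \otimes \cB_0)$, contradicting entanglement of $\rho$. I do not expect any genuine obstacle here — the only mild care needed is to confirm that the restriction of a product state is a product state on the subalgebras, which is transparent on elementary tensors, and to invoke Proposition~\ref{prop:tensor-I}~(iv) to identify $S_*$ with the closed convex hull of elementary tensors, which is why the continuous affine image argument goes through.
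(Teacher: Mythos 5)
Your proof is correct and follows essentially the same route as the paper: Hahn--Banach for surjectivity of $r$, the observation that $r$ maps product states to product states combined with affineness and weak$^*$-continuity to get $r(S_*(\cA \otimes \cB)) \subseteq S_*(\cA_0 \otimes \cB_0)$, and lifting of product states plus compact convexity of the image to get surjectivity of $r_*$. Your write-up merely spells out a bit more explicitly the closed-convex-image argument that the paper leaves implicit.
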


\begin{proof} The restriction mapping $r \colon S(\cA \otimes \cB) \to S(\cA_0 \otimes \cB_0)$ is  surjective (by Hahn--Banach), and it maps a product state $\rho_\cA \otimes \rho_\cB$, with $\rho_\cA \in S(\cA)$ and $\rho_\cB \in S(\cB)$, to the product state $\rho_{\cA_0} \otimes \rho_{\cB_0}$, where $\rho_{\cA_0}$ and $\rho_{\cB_0}$ are the restrictions of $\rho_\cA$ and $\rho_\cB$ to $\cA_0$ and $\cB_0$, respectively. Being affine and continuous, it follows that $r$ maps $S_*(\cA \otimes \cB)$ into $S_*(\cA_0 \otimes \cB_0)$. Conversely, any states $\rho_{\cA_0} \in S(\cA_0)$ and $\rho_{\cB_0} \in S(\cB_0)$ can be lifted to states $\rho_\cA \in S(\cA)$ and $\rho_\cB \in S(\cB)$, and $r_*(\rho_\cA \otimes \rho_\cB) = \rho_{\cA_0} \otimes \rho_{\cB_0}$, which shows that also $r_*$ is surjective. 
\end{proof}

\noindent We next prove a permanence result for entanglement of states with respect to quotients., cf.\ 
Proposition~\ref{prop:entanglement-quotient}. First we need two (standard) lemmas.

\begin{lemma} \label{lm:Milman}
Let $K$ be a compact convex set, let $F$ be a closed face in $K$, and let $X$ be a compact subset of $K$. Then
$$ \overline{\conv(X)} \cap F = \overline{\conv(X \cap F)}.$$
\end{lemma}

\begin{proof} By Milman's partial converse to the Krein--Milman theorem and compactness of $X$, we have $\partial_e \, (\overline{\conv(X)}) \subseteq X$, and since $F$ is a face in $K$ we have 
$$\partial_e \, (\overline{\conv(X)} \cap F) = \partial_e \, (\overline{\conv(X)}) \cap F \subseteq X \cap F.$$
It now follows from Krein--Milman that $ \overline{\conv(X)} \cap F \subseteq \overline{\conv(X \cap F)}$. The other inclusion is obvious.
\end{proof}

\begin{lemma} \label{lm:face}
Let $\cA$ and $\cA_1$ be unital \Cs s and let $\pi \colon \cA \to \cA_1$ be a (unital) surjective \sh. It follows that $\{\rho \circ \pi : \rho \in S(\cA_1)\}$ is a closed face in $S(\cA)$.
\end{lemma}

\begin{proof} Set $F = \{\rho \circ \pi : \rho \in S(\cA_1)\} \subseteq S(\cA)$. Since $F$ is the image of the compact convex set $S(\cA_1)$ under the continuous affine map $\rho \mapsto \rho \circ \pi$, we see that $F$ is compact and convex. Let $J$ denote the kernel of $\pi$, which is a closed two-sided ideal in $\cA$. A state  on $\cA$ belongs to $F$ if and only if it vanishes on $J$. Suppose that $\phi \in F$ is a convex combination $\phi = \alpha  \, \phi_1 + (1-\alpha) \, \phi_2$ of two states $\phi_1,\phi_2$ on $\cA$ with $0 < \alpha < 1$. Then $\phi_1$ and $\phi_2$ must vanish on all positive elements of $J$, and hence on all of $J$, so $\phi_1$ and $\phi_2$ both belong to $F$. This proves that $F$ is a face.
\end{proof}

\begin{proposition} \label{prop:entanglement-quotient} Let $\cA$, $\cA_1$, $\cB$, $\cB_1$ be unital \Cs s and let $\pi_\cA \colon \cA \to \cA_1$ and $\pi_\cB \colon \cB \to \cB_1$ be (unital) surjective \sh s. Let $\rho \in S(\cA_1 \otimes \cB_1)$. Then $\rho$ is entangled in $\cA_1 \otimes \cB_1$ if and only if  $\rho \circ (\pi_\cA \otimes \pi_\cB)$ is entangled in $\cA \otimes \cB$.
\end{proposition}

\begin{proof} Consider the following sets of states on $\cA \otimes \cB$:
$$F = \{ \rho \circ (\pi_\cA \otimes \pi_\cB) : \rho \in S(\cA_1 \otimes \cB_1)\}, \quad 
F_* = \{ \rho \circ (\pi_\cA \otimes \pi_\cB) : \rho \in S_*(\cA_1 \otimes \cB_1)\},$$
$$X = \{ \rho_\cA \otimes \rho_\cB : \rho_\cA \in S(\cA), \rho_\cB \in S(\cB)\}.$$
Then $X$ is compact (being the continuous image of the compact set $S(\cA) \times S(\cB)$), and $\overline{\conv(X)} = S_*(\cA \otimes \cB)$. 
We must therefore show that $\overline{\conv(X)} \cap F = F_*$. The inclusion $F_* \subseteq \overline{\conv(X)} \cap F$ is clear. To prove the reverse inclusion, it suffices to show that $X \cap F \subseteq F_*$ by Lemmas~\ref{lm:Milman} and \ref{lm:face}.
Take $\phi \in X \cap F$. Then 
$$\rho(\pi_\cA(a) \otimes \pi_\cB(b)) = \phi(a \otimes b) = \phi(a \otimes 1_\cB)\phi(1_\cA \otimes b) = \rho(\pi_\cA(a) \otimes 1_{\cB_1})\rho(1_{\cA_1} \otimes \pi_\cB(b)),$$
for all $a \in \cA$ and $b \in \cB$, since $\phi$ is a product state. It follows that $\rho$ is a product state, so $\phi \in F_*$.
\end{proof}

\noindent We pursue the theme of permanence properties in Proposition~\ref{prop:regular-positive} below,  where we show that entanglement of positive elements in a tensor product cannot un-entangle when passing to larger \Cs s, at least in the presence of some (mild) injectivity assumptions:

\begin{definition} \label{def:injective} A \Cs{} $\cA$ is said to be \emph{injective}, respectively, \emph{approximately injective} if for all inclusions $\cA \subseteq \cB$, there exists a completely positive contractive (cpc) map $E \colon \cB \to \cA$, respectively, a net $E_\alpha \colon \cB \to \cA$ of cpc maps, such that 
$E(a) = a$, respectively, $\lim_\alpha E_\alpha(a) = a$, for all $a \in \cA$. 

A map $E$ satisfying the conditions above is a conditional expectation, and a we call the net $(E_\alpha)$ an \emph{approximate conditional expectation}. 
\end{definition}

\noindent A (formally) weaker notion of approximate injectivity was considered by Effros--Haagerup in \cite{EffHaa:lift}. They show that (their version of) approximate injectivity implies the WEP of Lance; hence so does our version of approximate injectivity. Kirchberg proved in \cite{Kir:CEP} that the equivalence of the Effros--Haagerup approximate injectivity (which is condition (A4) in Kirchberg's paper) is equivalent to WEP if and only if the Connes Embedding Problem has an affirmative answer, which by \cite{JNVWY:MIP*=RE} is not the case!

Injectivity for \Cs s is quite rare (all finite dimensional \Cs s have this property), but approximate injectivity is much more common. For example, all nuclear \Cs s are approximately injective. Indeed, suppose that $\cA$ is a nuclear \Cs{} witnessed by a net $(\varphi_\alpha, M_{n(\alpha)}(\C), \psi_\alpha)$ where $\varphi_\alpha \colon \cA \to M_{n(\alpha)}(\C)$ and $\psi_\alpha \colon M_{n(\alpha)}(\C) \to \cA$ are cpc maps satisfying $\lim_\alpha (\psi_\alpha \circ \varphi_\alpha)(a) = a$, for all $a \in \cA$, and that $\cA \subseteq \cB$. By Arveson's extension theorem we may extend $\varphi_\alpha$ to a cpc map $\overline{\varphi}_\alpha \colon \cB \to M_{n(\alpha)}(\C)$. Then $E_\alpha = \psi_\alpha \circ \overline{\varphi}_\alpha \colon \cB \to \cA$ is an approximate conditional expectation.

\begin{proposition} \label{prop:regular-positive}
Let $\cA_0 \subseteq \cA$ and $\cB_0 \subseteq \cB$ be  \Cs s. Then {\rm{(i)}}, {\rm{(ii)}}, and {\rm{(iii)}} below hold if $\cA_0$ and $\cB_0$ are injective (e.g., finite dimensional); and {\rm{(ii)}} and {\rm{(iii)}} hold if $\cA_0$ and $\cB_0$ are approximately injective (e.g., nuclear):
\begin{enumerate}
\item $(\cA^+ \odot \cB^+) \cap (\cA_0 \odot \cB_0) = \cA_0^+ \odot \cB_0^+$,
\item $(\cA^+ \otimes \cB^+) \cap (\cA_0 \otimes \cB_0) = \cA_0^+ \otimes \cB_0^+$,
\item $\cA_0 \otimes_\mx \cB_0 \subseteq \cA \otimes_\mx \cB$ and $(\cA^+ \otimes_\mx \cB^+) \cap (\cA_0 \otimes_\mx \cB_0) = \cA_0^+ \otimes_\mx \cB_0^+.$
\end{enumerate}
\end{proposition}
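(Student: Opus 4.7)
The inclusion $\cA_0^+ \odot \cB_0^+ \subseteq (\cA^+ \odot \cB^+) \cap (\cA_0 \odot \cB_0)$ is immediate from $\cA_0^+ \subseteq \cA^+$ and $\cB_0^+ \subseteq \cB^+$, and passing to norm closures handles the easy direction of (ii) and (iii) as well. My plan for the nontrivial inclusion is to use conditional expectations (or approximate ones) to construct a positive linear retraction from the ambient cones onto the sub-cones.

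For (i), injectivity supplies conditional expectations $E_\cA \colon \cA \to \cA_0$ and $E_\cB \colon \cB \to \cB_0$, each cpc and equal to the identity on its range. The map $E_\cA \otimes E_\cB$ is well-defined on the algebraic tensor product $\cA \odot \cB$ and sends each elementary positive tensor $a \otimes b$, with $a \in \cA^+$ and $b \in \cB^+$, to $E_\cA(a) \otimes E_\cB(b) \in \cA_0^+ \odot \cB_0^+$, using that $E_\cA$ and $E_\cB$ are positive. Hence $E_\cA \otimes E_\cB$ carries $\cA^+ \odot \cB^+$ into $\cA_0^+ \odot \cB_0^+$ while fixing $\cA_0 \odot \cB_0$ pointwise. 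Any $z \in (\cA^+ \odot \cB^+) \cap (\cA_0 \odot \cB_0)$ therefore satisfies $z = (E_\cA \otimes E_\cB)(z) \in \cA_0^+ \odot \cB_0^+$, proving (i).

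For (ii) and (iii) in the injective case, I will invoke the standard fact that the tensor product of two cpc maps between $C^*$-algebras extends to a cpc map on both the spatial and the maximal $C^*$-tensor products. This gives cpc extensions of $E_\cA \otimes E_\cB$ from $\cA \otimes \cB$ to $\cA_0 \otimes \cB_0$ and from $\cA \otimes_\mx \cB$ to $\cA_0 \otimes_\mx \cB_0$, still acting as the identity on $\cA_0 \otimes \cB_0$ and $\cA_0 \otimes_\mx \cB_0$ respectively, and, by continuity, still sending $\cA^+ \otimes \cB^+$ and $\cA^+ \otimes_\mx \cB^+$ into the closed cones $\cA_0^+ \otimes \cB_0^+$ and $\cA_0^+ \otimes_\mx \cB_0^+$. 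The argument from (i) then applies verbatim.

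Finally, for (ii) and (iii) in the approximately injective case, let $(E_\cA^\alpha)$ and $(E_\cB^\beta)$ be approximate conditional expectations. Each $E_\cA^\alpha \otimes E_\cB^\beta$ extends cpc to the spatial tensor product $\cA \otimes \cB \to \cA_0 \otimes \cB_0$, and likewise on the maximal tensor product, and by the injective-case reasoning maps the separable positive cone into the closed cone $\cA_0^+ \otimes \cB_0^+$ (respectively $\cA_0^+ \otimes_\mx \cB_0^+$). A standard $\varepsilon/3$ argument, using uniform contractivity together with density of $\cA_0 \odot \cB_0$ in either closure, will show that $(E_\cA^\alpha \otimes E_\cB^\beta)(z) \to z$ in norm for every $z \in \cA_0 \otimes \cB_0$, and similarly in the maximal completion. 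Any $z$ in $(\cA^+ \otimes \cB^+) \cap (\cA_0 \otimes \cB_0)$ is then a norm limit of elements of the closed cone $\cA_0^+ \otimes \cB_0^+$ and hence lies in it; the maximal case (iii) follows identically. The main technical point I expect to need care is the verification that $E_\cA^\alpha \otimes E_\cB^\beta$ is genuinely contractive and lands in the subalgebra with respect to both the spatial and the maximal tensor norm; once that is in hand, the limit argument in the product net is routine bookkeeping.
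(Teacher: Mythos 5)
Your proposal is correct and follows essentially the same route as the paper: conditional expectations $E_\cA \odot E_\cB$ fixing $\cA_0 \odot \cB_0$ for (i), and (approximate) conditional expectations tensored on the spatial and maximal completions, combined with contractivity, density and a limit argument, for (ii) and (iii). The only difference is cosmetic (a two-index net and an explicit $\varepsilon/3$ step where the paper simply asserts that $E^\alpha_\cA \otimes E^\alpha_\cB$ is an approximate conditional expectation on the completed tensor products).
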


\begin{proof} The inclusions ``$\supseteq$'' in (i)--(ii) hold trivially, for all $\cA_0$ and $\cB_0$, and likewise for the the analogous identity in (iii) when the first inclusion in (iii) holds.

 Suppose  that $\cA_0$ and $\cB_0$ are injective, and let $E_\cA \colon \cA \to \cA_0$ and $E_\cB \colon \cB \to \cB_0$ be conditional expectations. The (algebraic) tensor product map $E_\cA \odot E_\cB \colon \cA \odot \cB \to \cA_0 \odot \cB_0$ maps $\cA^+ \odot \cB^+$ into $\cA_0^+ \odot \cB_0^+$ and restricts to the identity on $\cA_0 \odot \cB_0$. Hence, if $x \in (\cA^+ \odot \cB^+) \cap (\cA_0 \odot \cB_0)$, then $x = (E_\cA \odot E_\cB)(x) \in \cA_0^+ \odot \cB_0^+$. 

Suppose next that $\cA_0$ and $\cB_0$ are approximately injective, and let $E^\alpha_\cA \colon \cA \to \cA_0$ and $E^\alpha_\cB \colon \cB \to \cB_0$ be approximate conditional expectations. Then
$$E^\alpha_\cA \otimes E^\alpha_\cB \colon \cA \otimes \cB \to \cA_0 \otimes \cB_0, \qquad 
E^\alpha_\cA \otimes_\mx E^\alpha_\cB \colon \cA \otimes_\mx \cB \to \cA_0 \otimes_\mx \cB_0$$
are approximate conditional expectations. 

Let $\iota_\cA \colon \cA_0 \to \cA$ and $\iota_\cB \colon \cB_0 \to \cB$ be the inclusion mappings. Then
$$\lim_\alpha (E^\alpha_\cA \otimes_\mx E^\alpha_\cB) \circ (\iota_\cA \otimes_\mx \iota_\cB)(x) = x,$$
 for all $x \in \cA_0 \otimes_\mx \cB_0$. Hence $\iota_\cA \otimes_\mx \iota_\cB$ is injective. 

The cpc maps $E^\alpha_\cA \otimes E^\alpha_\cB$  and $E^\alpha_\cA \otimes_\mx E^\alpha_\cB$ map $\cA^+ \odot \cB^+$ into  $\cA_0^+ \odot \cB_0^+$, and hence, by continuity, $\cA^+ \otimes \cB^+$ into $\cA_0^+ \otimes \cB_0^+$, respectively,  $\cA^+ \otimes_\mx \cB^+$ into $\cA_0^+ \otimes_\mx \cB_0^+$.

Let $x \in (\cA^+ \otimes \cB^+) \cap (\cA_0 \otimes \cB_0)$. Then $x = \lim_\alpha (E^\alpha_\cA \otimes E^\alpha_\cB)(x)$ and $(E^\alpha_\cA \otimes E^\alpha_\cB)(x)$ belongs to $\cA_0^+ \otimes \cB_0^+$, for all $\alpha$, so $x \in \cA_0^+ \otimes \cB_0^+$; likewise for $x \in (\cA^+ \otimes_\mx \cB^+) \cap (\cA_0 \otimes_\mx \cB_0)$.
\end{proof}

\noindent We have no examples of inclusions $\cA_0 \subseteq \cA$ and $\cB_0 \subseteq \cB$  of \Cs s where (i) or (ii) above fails. Note, however, that the first inclusion in (iii) does not hold in general, cf.\ \cite[Proposition 3.6.9]{BO-Approx}.

\begin{definition} \label{def:kappa}
Let $\cA$ and $\cB$ be unital \Cs s. For a linear functional $\rho$ on $\cA \odot \cB$ set $\|\rho\|_\mx = \sup \{ |\rho(x)| : x \in \cA \odot \cB, \, \|x\|_\mx \le 1\}$, and set
$$\kappa(\cA,\cB) = \sup \{ \|\rho\|_\mx : \rho \in S(\cA) \otimes^* S(\cB) \}.$$
\end{definition}

\noindent  Observe that $S(\cA \otimes_\mx \cB) = S(\cA) \otimes^* S(\cB)$ if and only if $\kappa(\cA,\cB) = 1$, which by the proposition below 
(and Theorem~\ref{thm:twoabelian}) happens  if and only if one of $\cA$ and $\cB$ is commutative.

\begin{proposition} \label{prop:kappa} Let $\cA$ and $\cB$ be unital \Cs s.
\begin{enumerate}
\item $n \le \kappa(M_n(\C), M_n(\C)) < \infty$, for each integer $n \ge 1$.
\item Let $\cA$ and $\cB$ be unital \Cs s both of rank at least $n \ge 2$. Then $\kappa(\cA,\cB) \ge n$.
\item If neither $\cA$ nor $\cB$ is sub-homogeneous, then $\kappa(\cA,\cB)=\infty$.
\item If $S(\cA) \otimes^* S(\cB) \subseteq \aff_r(S(\cA) \otimes_* S(\cB))$, i.e., $S(\cA) \otimes^* S(\cB)$ is bounded by $S(\cA) \otimes_* S(\cB)$ with a constant $r \ge 0$, then $\kappa(\cA,\cB) \le 2r+1$. 
\end{enumerate}
\end{proposition}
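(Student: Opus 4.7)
I would tackle the four parts roughly in order. The upper bound in (i) is immediate from Proposition~\ref{prop:dimK} (both tensor products are finite dimensional, and the maximal is bounded relatively to the minimal) together with (iv): choose $r$ so that $S(M_n(\C)) \otimes^* S(M_n(\C)) \subseteq \aff_r(S(M_n(\C)) \otimes_* S(M_n(\C)))$, then $\kappa(M_n(\C),M_n(\C)) \le 2r+1 < \infty$. For the lower bound, take $T = S^{(n)}/n$ for the flip $S^{(n)}$ of Remark~\ref{rem:S}; then $T \in M_n(\C)^+ \tensorhat M_n(\C)^+$ with $\Tr(T)=1$, so by Remark~\ref{rem:entangled-matrices} the functional $\sigma^{(n)} = \langle\,\cdot\,,T\rangle$ lies in $S(M_n(\C)) \otimes^* S(M_n(\C))$, and $\|\sigma^{(n)}\|_\mx = \|T\|_1 = \tfrac{1}{n}\|S^{(n)}\|_1$. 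Since $S^{(n)}$ is a self-adjoint unitary on $\C^{n^2}$, all $n^2$ of its singular values equal $1$, giving $\|\sigma^{(n)}\|_\mx = n$.

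The bulk of the work is (ii), where I plan to transport $\sigma^{(n)}$ from matrix algebras to $\cA,\cB$ via Glimm's lemma. By Lemma~\ref{lm:Glimm}(iv) pick unital \sh{}s $\phi_\cA \colon \cE_n \to \cA$ and $\phi_\cB \colon \cE_n \to \cB$ that are non-zero on $CM_n$; let $\cA_0 \subseteq \cA$, $\cB_0 \subseteq \cB$ denote their images. Both are unital sub-\Cs s, and as quotients of the sub-homogeneous (hence nuclear) $\cE_n$ they are themselves nuclear. The closed ideals of $\cE_n$ have the form $\{f : f|_K = 0\}$ for closed $K \subseteq [0,1]$, and non-vanishing on $CM_n$ forces the hull of the kernel to meet $(0,1]$; so for suitable $t \in (0,1]$ the evaluation map $\mathrm{ev}_t$ factors through $\cA_0$, yielding a surjective unital \sh{} $\pi_\cA \colon \cA_0 \to M_n(\C)$, and likewise $\pi_\cB$. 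Set $\tilde\sigma = \sigma^{(n)} \circ (\pi_\cA \otimes \pi_\cB)$ on $\cA_0 \odot \cB_0$; being unital and positive on $\cA_0^+ \odot \cB_0^+$ it belongs to $S(\cA_0) \otimes^* S(\cB_0)$.

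The next step is to extend $\tilde\sigma$ to $\rho \in S(\cA) \otimes^* S(\cB)$ using Proposition~\ref{prop:A}, which requires positivity of $\tilde\sigma$ on the larger cone $(\cA^+ \odot \cB^+) \cap (\cA_0 \odot \cB_0)$. Since $\cA_0$ and $\cB_0$ are nuclear, they are approximately injective, so Proposition~\ref{prop:regular-positive}(ii) yields $(\cA^+ \otimes \cB^+) \cap (\cA_0 \otimes \cB_0) = \cA_0^+ \otimes \cB_0^+$. Thus any $z$ in the algebraic intersection lies in the min-norm closure of $\cA_0^+ \odot \cB_0^+$; applying the continuous \sh{} $\pi_\cA \otimes \pi_\cB$ places $(\pi_\cA \otimes \pi_\cB)(z)$ in the closure of $M_n(\C)^+ \odot M_n(\C)^+$, which is closed in the finite-dimensional space $M_n(\C) \otimes M_n(\C)$, so $(\pi_\cA \otimes \pi_\cB)(z) \in M_n(\C)^+ \odot M_n(\C)^+$; then $\sigma^{(n)} \in S(M_n(\C)) \otimes^* S(M_n(\C))$ is positive there, giving $\tilde\sigma(z) \ge 0$. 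For the bound $\|\rho\|_\mx \ge n$, note that $\pi_\cA \otimes \pi_\cB \colon \cA_0 \otimes_\mx \cB_0 \to M_n(\C) \otimes M_n(\C)$ is a surjective \sh{} between \Cs s and hence a quotient map: for each $\epsilon>0$, the element $(1-\epsilon)S^{(n)}$ of max-norm $1-\epsilon$ lifts to some $x \in \cA_0 \otimes_\mx \cB_0$ with $\|x\|_\mx < 1$; approximating $x$ by an element of the algebraic tensor product with only a small increase in norm, and using that the canonical \sh{} $\cA_0 \otimes_\mx \cB_0 \to \cA \otimes_\mx \cB$ is norm-decreasing, we obtain an element on which $\rho$ is close to $(1-\epsilon)n$ while its max-norm in $\cA \otimes_\mx \cB$ stays close to $1$. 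Letting $\epsilon \to 0$ gives $\|\rho\|_\mx \ge n$.

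Finally, (iii) is immediate: by Remark~\ref{rem:subhomogeneous} a \Cs{} fails to be sub-homogeneous exactly when its rank is infinite, so applying (ii) for every $n \ge 1$ forces $\kappa(\cA,\cB) = \infty$. For (iv), write $\rho = (r+1)\rho_1 - r\rho_2$ with $\rho_i \in S(\cA) \otimes_* S(\cB) = S_*(\cA \otimes \cB)$ by Proposition~\ref{prop:statesAB}; each $\rho_i$ is a separable state on $\cA \otimes \cB$ and thus pulls back via the canonical quotient $\cA \otimes_\mx \cB \to \cA \otimes \cB$ to a functional of max-norm $1$, so the triangle inequality yields $\|\rho\|_\mx \le (r+1)+r = 2r+1$. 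The main obstacle I anticipate is the extension step in (ii): $\cA_0,\cB_0$ need not be injective (only nuclear), so part (i) of Proposition~\ref{prop:regular-positive} is unavailable, and the argument must be routed through the closure-based statement (ii) combined with the closedness of the separable cone in the finite-dimensional algebra $M_n(\C) \otimes M_n(\C)$.
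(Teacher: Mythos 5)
Your proof is correct and follows essentially the same route as the paper's: the normalized flip $n^{-1}S^{(n)}$ as the witness in (i), Glimm's lemma to produce images of $\cE_n$ with unital surjections onto $M_n(\C)$, pull-back of the matrix witness followed by a Hahn--Banach extension via Proposition~\ref{prop:A} together with Proposition~\ref{prop:regular-positive} in (ii), and the same short arguments for (iii) and (iv). The only (harmless) deviation is in the extension step of (ii), where you check the cone-compatibility at the algebraic level using Proposition~\ref{prop:regular-positive}(ii) and closedness of the separable cone in $M_n(\C)\otimes M_n(\C)$, whereas the paper first passes to $\cA_0\otimes_\mx\cB_0$ and invokes Proposition~\ref{prop:regular-positive}(iii).
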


\begin{proof} (i). Let $S^{(n)} \in M_n(\C)^+ \otensorhat M_n(\C)^+$ be as in Remark~\ref{rem:S} and 
set $\rho = \langle \, \cdot \, , n^{-1} S^{(n)} \rangle$. 
Then $\rho \in S(M_n(\C)) \otimes^* S(M_n(\C))$, cf.\ 
Remark~\ref{rem:entangled-matrices}, since $\Tr(n^{-1} S^{(n)})= 1$; and $\|\rho\|_\mx = \|\rho\| = \|n^{-1} S^{(n)}\|_1 = n$.
Conversely, each $\rho$ in $S(M_n(\C)) \otimes^* S(M_n(\C))$ is a linear functional on the finite dimensional vector space $(M_n(\C) \otimes M_n(\C))_\sa$, and is therefore bounded.

(ii). 
By Lemma~\ref{lm:Glimm} there are (nuclear) unital sub-\Cs s $\cA_0 \subseteq \cA$ and $\cB_0 \subseteq \cB$ and unital surjections $\pi_1 \colon \cA_0 \to M_n(\C)$ and $\pi_2 \colon \cB_0 \to M_n(\C)$. This gives us a unital surjection 
$\pi_1 \otimes \pi_2 \colon  \cA_0 \odot \cB_0 \to M_n(\C) \otimes M_n(\C)$. 

By (i), there is $\rho$ in $S(M_n(\C)) \otimes^* S(M_n(\C))$ with $n \le \|\rho\|_\mx < \infty$. 
The functional $\rho \circ (\pi_1 \otimes \pi_2)$ on $\cA_0 \odot \cB_0$, then belongs to $S(\cA_0) \otimes^* S(\cB_0)$ and satisfies $n \le \|\rho \circ (\pi_1 \otimes \pi_2)\|_\mx < \infty$. We may therefore extend $\rho \circ (\pi_1 \otimes \pi_2)$ to the completion $\cA_0 \otimes_\mx \cB_0$ to obtain a state on $(\cA_0 \otimes_\mx \cB_0, \cA_0^+ \otimes_\mx \cB_0^+, 1_\cA \otimes 1_\cB)$.  By Proposition~\ref{prop:regular-positive} (iii) and Proposition~\ref{prop:A} this state can further  be extended to a state $\rho'$ on $(\cA \otimes_\mx \cB, \cA^+ \otimes_\mx \cB^+, 1_\cA \otimes 1_\cB)$. The restriction of $\rho'$ to $(\cA \odot \cB)_\sa$ yields a state $\rho''$ in $S(\cA) \otimes^* S(\cB)$ satisfying $\|\rho''\|_\mx \ge \|\rho \circ (\pi_1 \otimes \pi_2)\|_\mx \ge n$, thus giving the desired conclusion. 

(iii) is an immediate consequence of (ii) and Remark~\ref{rem:subhomogeneous}. 

(iv). If $\rho \in \aff_r(S(\cA) \otimes_* S(\cB))$, for some $r \ge 0$, then $\rho = (r+1)\rho_1 - r \rho_2$, for some $\rho_1,\rho_2 \in S(\cA) \otimes_* S(\cB) \subseteq S(\cA \otimes_\mx \cB)$, so $$\|\rho\|_\mx \le (r+1) \|\rho_1\|_\mx + r \|\rho_2\|_\mx \le 2r+1.$$ 
This proves (iv).
\end{proof}

\noindent We have the following description of $\kappa(\cA,\cB)$ in terms of unital positive maps in the case where $\cA$ and $\cB$ are matrix algebras:

\begin{theorem} \label{prop:kappa(n,m)} Let $n,m \ge 1$ be integers. Then
$$\kappa(M_n(\C), M_m(\C)) = \sup \{\|\Phi\|_{\mathrm{cb}} : \Phi \in \mathrm{UPos}(n,m)\} = \min\{n,m\}.$$
\end{theorem}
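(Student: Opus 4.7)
The plan is to sandwich both $\kappa(M_n(\C),M_m(\C))$ and $\sup\{\|\Phi\|_{\mathrm{cb}} : \Phi \in \mathrm{UPos}(n,m)\}$ between $\min\{n,m\}$, via the chain
\begin{equation*}
\min\{n,m\} \;\le\; \kappa(M_n(\C), M_m(\C)) \;\le\; \sup\big\{\|\Phi\|_{\mathrm{cb}} : \Phi \in \mathrm{UPos}(n,m)\big\} \;\le\; \min\{n,m\},
\end{equation*}
from which both displayed equalities will follow simultaneously.

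The leftmost inequality is already at hand: it is Proposition~\ref{prop:kappa}~(ii) applied with $\min\{n,m\}$ in place of $n$, using that both $M_n(\C)$ and $M_m(\C)$ have rank at least $\min\{n,m\}$.

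For the middle inequality I would use Proposition~\ref{prop:max-tensor-matrix} to write any $\phi \in S(M_n(\C)) \otimes^* S(M_m(\C))$ in the form $\phi = \rho \circ (\Phi \otimes \id_m)$ with $\Phi \in \mathrm{UPos}(n,m)$ and $\rho \in S(M_m(\C) \otimes M_m(\C))$. Matrix algebras are nuclear, so $\|\cdot\|_{\mx}$ coincides with the spatial $C^*$-norm on $M_n(\C) \odot M_m(\C)$, whence for every $x$ there,
\begin{equation*}
|\phi(x)| \;=\; \bigl|\rho\bigl((\Phi \otimes \id_m)(x)\bigr)\bigr| \;\le\; \|\rho\|\,\|\Phi \otimes \id_m\|\,\|x\|_{\mx}\;=\;\|\Phi \otimes \id_m\|\,\|x\|_{\mx}.
\end{equation*}
Smith's theorem identifies $\|\Phi \otimes \id_m\|$ with $\|\Phi\|_{\mathrm{cb}}$ (the codomain being $M_m(\C)$), and taking suprema yields $\kappa(M_n(\C),M_m(\C)) \le \sup \|\Phi\|_{\mathrm{cb}}$.

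For the rightmost inequality, any $\Phi \in \mathrm{UPos}(n,m)$ satisfies $\|\Phi\|=1$ by the Russo--Dye theorem. Two standard cb-norm estimates apply: $\|\Phi\|_{\mathrm{cb}} \le n\|\Phi\|$ for any bounded linear map with domain $M_n(\C)$, and $\|\Phi\|_{\mathrm{cb}} \le m\|\Phi\|$ for any bounded linear map with codomain $M_m(\C)$; both are classical (see Paulsen's monograph), the latter being a direct consequence of Smith's theorem together with a row/column estimate on $M_m(\C)$. Taking the minimum gives $\|\Phi\|_{\mathrm{cb}} \le \min\{n,m\}$, closing the chain. The most delicate step is the cb-norm bound $\|\Phi\|_{\mathrm{cb}} \le m$; if a self-contained argument is preferred rather than a citation, it can be obtained by estimating $\|\Phi \otimes \id_m\|$ directly, using that $M_m(\C)$ is the linear span of its $m$ rows (or columns) of matrix units.
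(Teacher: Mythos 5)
Your sandwich chain $\min\{n,m\} \le \kappa(M_n(\C),M_m(\C)) \le \sup\{\|\Phi\|_{\mathrm{cb}}:\Phi\in\mathrm{UPos}(n,m)\} \le \min\{n,m\}$ is sound in outline, and its first two links are essentially the paper's own argument: the lower bound is Proposition~\ref{prop:kappa}~(ii), and the inequality $\kappa \le \sup\|\Phi\|_{\mathrm{cb}}$ comes from Proposition~\ref{prop:max-tensor-matrix} together with Smith's lemma $\|\Phi\|_{\mathrm{cb}}=\|\Phi\otimes\id_m\|$, exactly as in the paper (which even gets equality there by evaluating at self-adjoint elements).

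The gap is in the last link, and precisely in the case $n<m$. When $m\le n$ your argument is fine: Smith's lemma applies because the \emph{codomain} is $M_m(\C)$, and $\|\Phi\otimes\id_m\|\le m\|\Phi\|=m$ is classical. But when $n<m$ you need the domain-side bound $\|\Phi\|_{\mathrm{cb}}\le n$ for $\Phi\in\mathrm{UPos}(n,m)$, and your justification --- that $\|\Phi\|_{\mathrm{cb}}\le n\|\Phi\|$ holds for \emph{any} bounded map with domain $M_n(\C)$ and is classical, citing \cite{Paulsen:cb-book} --- is unsupported. Smith's lemma is a statement about the codomain, not the domain, and the elementary decompositions you allude to do not give the constant $n$ on the domain side: splitting $M_n(\C)$ into rows or columns yields only $\|\Phi\|_{\mathrm{cb}}\le n^{3/2}\|\Phi\|$, while the generalized-diagonal argument gives $\|\Phi_k\|\le k\|\Phi\|$, which is useless without a Smith-type stabilization for maps \emph{out of} $M_n(\C)$. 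The estimate $\sup\{\|\Phi\|_{\mathrm{cb}}:\Phi\in\mathrm{UPos}(n,m)\}\le n$ is exactly the point for which the paper invokes the recent result $d_1(M_n(\C))=n$ of Aubrun--Davidson--M\"uller-Hermes--Paulsen--Rahaman \cite[Theorem 3.7]{ADMPR:cb-norms}; the transpose map shows the constant is sharp, but the upper bound is not in \cite{Paulsen:cb-book}. (You also single out the codomain bound $\|\Phi\|_{\mathrm{cb}}\le m$ as the delicate step; in fact that is the easy half, and the domain bound is where the real content of the theorem sits.) So either supply a proof of $\|\Phi\|_{\mathrm{cb}}\le n$ for unital positive maps $\Phi\colon M_n(\C)\to M_m(\C)$, or cite \cite{ADMPR:cb-norms} as the paper does; as written, the case $n<m$ is not established.
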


\begin{proof} 
It was shown in Proposition~\ref{prop:max-tensor-matrix} that each state in $S(M_n(\C)) \otimes^* S(M_m(\C))$ is of the form $\rho \circ (\Phi \otimes \id_m)$, for some $\Phi \in \mathrm{UPos}(n,m)$ and some state $\rho$ on $M_n(\C) \otimes M_m(\C)$. By  \cite[Proposition 8.11]{Paulsen:cb-book} (Smith) we have
$$\|\rho \circ (\Phi \otimes \id_m)\|_\mx \le \|\Phi \otimes \id_m\| =  \|\Phi\|_{\mathrm{cb}},$$
which shows that 
$$\kappa(M_n(\C), M_m(\C)) \le \sup \{\|\Phi\|_{\mathrm{cb}} : \Phi \in \mathrm{UPos}(n,m)\}.$$

From the recent paper, \cite{ADMPR:cb-norms}, we obtain that 
$$ \sup \{\|\Phi\|_{\mathrm{cb}} : \Phi \in \mathrm{UPos}(n,m)\} \le \min\{n,m\}.$$
Indeed, the ``$\le n$''-part is \cite[Theorem 3.7]{ADMPR:cb-norms}, and the ``$\le m$''-part is well-known (see p.\ 2 of the same paper), and is a consequence of the estimate $\|\Phi\|_{\mathrm{cb}} = \|\Phi \otimes \id_m\|  \le m \|\Phi\|$, due to Tomiyama, cf.\ \cite[Exercise 3.10]{Paulsen:cb-book}, and the fact $\|\Phi\| =1$, as $\Phi$ is unital and positive. 

Finally, it was shown in Proposition~\ref{prop:kappa} (ii) that $\min\{n,m\} \le \kappa(M_n(\C), M_m(\C))$.
%
%
\end{proof}

\noindent It seems likely that the result of Theorem~\ref{prop:kappa(n,m)} above extends to general \Cs s, and we expect that
\begin{equation} \label{eq:kappa?}
\kappa(\cA,\cB) = \min\{\rank(\cA), \rank(\cB)\},
\end{equation}
for all unital \Cs s $\cA$ and $\cB$.
The inequality $\ge$ follows from Proposition~\ref{prop:kappa} (ii).\footnote{Very recently, and after this paper was made available online, equality  in \eqref{eq:kappa?} was established in \cite{Rahaman-Wasilewski-Seprable}.}

We saw in Proposition~\ref{prop:dimK} that $K_1 \otimes^* K_2$ is bounded relatively to $K_1 \otimes_* K_2$, when $K_1$ and $K_2$ are finite dimensional compact convex sets. This does not hold  in general when $K_1$ and $K_2$ are infinite dimensional:

\begin{corollary} \label{cor:unbounded} Let $\cA$ and $\cB$ be unital \Cs s. 
\begin{enumerate}
\item If neither $\cA$ nor $\cB$ is sub-homogeneous, then $S(\cA) \otimes^* S(\cB)$ is not bounded relatively to $S(\cA) \otimes_* S(\cB)$.
\item If $\cA \otimes \cB \ne \cA \otimes_\mx \cB$, then $S(\cA) \otimes^* S(\cB) \nsubseteq \aff(S(\cA) \otimes_* S(\cB))$. 
\end{enumerate}
\end{corollary}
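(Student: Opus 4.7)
The plan is to deduce both statements from results already established in this section: Proposition~\ref{prop:kappa}, the inclusion chain \eqref{eq:inclusions}, and the identification $S(\cA) \otimes_* S(\cB) = S_*(\cA \otimes \cB)$ from Proposition~\ref{prop:statesAB}.

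For (i), the argument is contrapositive. If $S(\cA) \otimes^* S(\cB) \subseteq \aff_r(S(\cA) \otimes_* S(\cB))$ for some $r \ge 0$, then Proposition~\ref{prop:kappa}~(iv) yields $\kappa(\cA,\cB) \le 2r+1 < \infty$, while Proposition~\ref{prop:kappa}~(iii) forces $\kappa(\cA,\cB) = \infty$ whenever neither of $\cA$, $\cB$ is sub-homogeneous. This immediate contradiction proves (i).

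For (ii), I would argue by contradiction: assume $S(\cA) \otimes^* S(\cB) \subseteq \aff(S(\cA) \otimes_* S(\cB))$, and pick an arbitrary $\rho \in S(\cA \otimes_\mx \cB)$. By \eqref{eq:inclusions}, $\rho$ lies in $S(\cA) \otimes^* S(\cB)$, so the hypothesis furnishes $r \ge 0$ and $\rho_1, \rho_2 \in S(\cA) \otimes_* S(\cB) = S_*(\cA \otimes \cB) \subseteq S(\cA \otimes \cB)$ with $\rho = (r+1)\rho_1 - r \rho_2$. Since $\rho_1$ and $\rho_2$ are continuous with respect to the minimal $C^*$-norm on $\cA \odot \cB$, so is $\rho$. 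Consequently $\rho|_{\cA \odot \cB}$ extends by continuity to a unital, bounded linear functional $\tilde\rho$ on $\cA \otimes \cB$; positivity of $\tilde\rho$ follows from norm-density of $\{x^*x : x \in \cA \odot \cB\}$ in $(\cA \otimes \cB)^+$, so $\tilde\rho \in S(\cA \otimes \cB)$.

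To conclude, I would combine this with the standard formula $\|y\|_\mx^2 = \sup\{\rho(y^*y) : \rho \in S(\cA \otimes_\mx \cB)\}$ for $y \in \cA \odot \cB$. Since $\rho(y^*y) = \tilde\rho(y^*y) \le \|y^*y\|_{\min} = \|y\|_{\min}^2$ by the previous step, the maximal and minimal $C^*$-norms coincide on $\cA \odot \cB$, giving $\cA \otimes_\mx \cB = \cA \otimes \cB$ and contradicting the hypothesis. The most delicate point I anticipate is the positivity of the extension $\tilde\rho$, which relies on the standard density fact for positive elements in a $C^*$-tensor product; beyond that, the proof is essentially bookkeeping on top of the previously established results.
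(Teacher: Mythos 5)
Your proof of (i) is exactly the paper's: the paper also just cites Proposition~\ref{prop:kappa} (iii) and (iv). For (ii) your argument is correct but runs along a different track than the paper's. The paper lets $J \ne 0$ be the kernel of the canonical surjection $\cA \otimes_\mx \cB \to \cA \otimes \cB$, describes $S(\cA \otimes \cB)$ as the states on $\cA \otimes_\mx \cB$ vanishing on $J$, observes that every functional in $\aff(S(\cA \otimes \cB))$ then vanishes on $J$, and concludes by picking a state on $\cA \otimes_\mx \cB$ that does not vanish on $J$; this makes the witness completely explicit and takes only a couple of lines. You instead assume the containment, deduce that every state on $\cA \otimes_\mx \cB$ is $\min$-continuous on $\cA \odot \cB$ (as an affine combination $(r+1)\rho_1 - r\rho_2$ of $\min$-continuous states), upgrade the continuous extension $\tilde\rho$ to a state on $\cA \otimes \cB$ via positivity on the dense set $\{x^*x : x \in \cA \odot \cB\}$, and then use $\|y\|_\mx^2 = \sup_\rho \rho(y^*y)$ to force $\|\cdot\|_\mx = \|\cdot\|_{\min}$ on $\cA \odot \cB$, contradicting $\cA \otimes \cB \ne \cA \otimes_\mx \cB$. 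Both proofs rest on the same observation that affine combinations of $\min$-continuous states remain $\min$-continuous (equivalently, kill $J$); your version costs the extra positivity-extension lemma (which you handle correctly, and which is genuinely needed, since unitality plus the a priori bound $2r+1$ alone would not give a uniform norm estimate over all $\rho$), but in return it shows the slightly stronger fact that the containment would make the canonical surjection isometric on $\cA \odot \cB$. Also note that your use of the decomposition $(r+1)\rho_1 - r\rho_2$ for an arbitrary element of $\aff(S(\cA)\otimes_* S(\cB))$ is legitimate, as the paper records $\aff(K) = \bigcup_{r \ge 0}\aff_r(K)$.
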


\begin{proof} Item (i) is an immediate consequence of Proposition~\ref{prop:kappa} (iii) and (iv).

(ii). Let $J$ be the (non-zero) kernel of the canonical surjection $\cA \otimes_\mx \cB \to \cA \otimes \cB$. We can then describe $S(\cA \otimes \cB)$ as the set of states $\rho$ on $\cA \otimes_\mx \cB$ that vanish on $J$. It follows that any functional in $\aff(S(\cA \otimes \cB))$ vanishes on $J$, so $S(\cA \otimes_\mx \cB) \nsubseteq \aff(S(\cA \otimes \cB))$. 
\end{proof}

\noindent We summarize our discussion about tensor product of state spaces of \Cs s in the following theorem.
Note in particular that the theorem implies that $S(\cA) \otimes_* S(\cB) = S(\cA) \otimes^* S(\cB)$ if and only if one of $S(\cA)$ and $S(\cB)$ is a Choquet simplex, thus confirming Barker's conjecture for compact convex sets arising as the state space of \Cs s. The theorem states  that entanglement (both of states and of positive elements) always exists in the presence of non-commutativity.  As mentioned already, part (i) of the theorem is well-known, see also the discussion after the theorem.  The part of the theorem stating that entanglement of states exists in non-commutative \Cs s is also well-known, see Takesaki, \cite[Theorem 4.14]{Takesaki:Vol-I} where it is shown that $S_*(\cA \otimes \cB) = S(\cA \otimes \cB)$ if and only if one of 
$\cA$ and $\cB$ is commutative.

\begin{theorem} \label{thm:twoabelian}
Let $\cA$ and $\cB$ be unital \Cs s. 
\begin{enumerate}
\item $S(\cA)$ is a Choquet simplex if and only if $\cA$ is commutative.
\item If one of $\cA$ and $\cB$ is commutative, then we have equalities in \eqref{eq:inclusions}, i.e.,
$$S(\cA) \otimes_* S(\cB) = S_*(\cA \otimes \cB) =S(\cA \otimes \cB) = S(\cA \otimes_\mx \cB) = S(\cA) \otimes^* S(\cB).$$
\item If both $\cA$ and $\cB$ are non-commutative, then the two inclusions 
$$S_*(\cA \otimes \cB) \subset S(\cA \otimes \cB), \qquad S(\cA \otimes_\mx \cB) \subset S(\cA) \otimes^* S(\cB),$$ 
in \eqref{eq:inclusions} are strict. In particular, $S(\cA) \otimes_* S(\cB) \ne S(\cA) \otimes^* S(\cB)$.
\item  One  of $\cA$ and $\cB$ is commutative if and only if $\cA^+ \otimes \, \cB^+= (\cA \otimes \cB)^+$ if and only if 
$\cA^+ \otimes_\mx \, \cB^+ = (\cA \otimes_\mx \cB)^+$.
\end{enumerate}
\end{theorem}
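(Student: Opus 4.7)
The plan is to establish (iii) first, as (i), (ii) and (iv) then follow from it together with Theorem~\ref{thm:N-P} and a parallel lifting argument for positive elements. The two strict inclusions in (iii) use different mechanisms.

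For $S_*(\cA \otimes \cB) \subsetneq S(\cA \otimes \cB)$, I would invoke Lemma~\ref{lm:Glimm}~(iv) to obtain unital \sh{}s $\cE_2 \to \cA$ and $\cE_2 \to \cB$ that are non-zero on $CM_2$, with images $\cA_0 \subseteq \cA$ and $\cB_0 \subseteq \cB$. Each is sub-homogeneous (hence nuclear) and, by the structure of quotients of $\cE_2$, admits a surjective \sh{} $\pi_j \colon \cA_0 \twoheadrightarrow M_2$ given by evaluation at some $t_0 \in (0,1]$, together with a ucp section $s_j \colon M_2 \to \cA_0$, e.g.\ $s_j(M)(t) = \phi(t)M + (1-\phi(t))\tr(M)I_2$ for a continuous $\phi \colon [0,1] \to [0,1]$ vanishing at $0$ and equal to $1$ at $t_0$. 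Fix an entangled state $\rho_0 \in S(M_2 \otimes M_2)$, e.g.\ the singlet (for which $\rho_0(S^{(2)})=-1$, cf.\ Remark~\ref{rem:S}), and let $\rho'_0 = \rho_0 \circ (\pi_1 \otimes \pi_2) \in S(\cA_0 \otimes \cB_0)$. If $\rho'_0$ were separable, pre-composing with the completely positive map $s_1 \otimes s_2$ would recover $\rho_0 = \rho'_0 \circ (s_1 \otimes s_2)$ as a weak-$^*$ limit of convex combinations of product states on $M_2 \otimes M_2$ (since $\sigma \circ s_j$ is a state on $M_2$ whenever $\sigma$ is a state on $\cA_0$), contradicting entanglement of $\rho_0$. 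Extending $\rho'_0$ to a state on $\cA \otimes \cB$ by Hahn--Banach and invoking Proposition~\ref{prop:inclusion} then yields an entangled state on $\cA \otimes \cB$. The second strict inclusion $S(\cA \otimes_\mx \cB) \subsetneq S(\cA) \otimes^* S(\cB)$ is immediate from Proposition~\ref{prop:kappa}~(ii): both $\cA$ and $\cB$ have rank at least $2$, so $\kappa(\cA,\cB) \ge 2 > 1$, whereas every state on $\cA \otimes_\mx \cB$ has maximal norm exactly $1$.

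Parts (i) and (ii) now follow quickly. The forward direction of (i) is the classical identification $S(C(X)) = \mathrm{Prob}(X)$, a Bauer simplex; conversely, if $\cA$ is non-commutative then (iii) applied with $\cB = \cA$ gives $S(\cA) \otimes_* S(\cA) \ne S(\cA) \otimes^* S(\cA)$, which by Theorem~\ref{thm:N-P} precludes $S(\cA)$ from being a Choquet simplex. For (ii), with $\cA$ commutative, part (i) makes $S(\cA)$ a Choquet simplex, Theorem~\ref{thm:N-P} forces $S(\cA) \otimes_* S(\cB) = S(\cA) \otimes^* S(\cB)$, and the chain \eqref{eq:inclusions} squeezes all intermediate sets; nuclearity of $\cA$ additionally gives $\cA \otimes \cB = \cA \otimes_\mx \cB$.

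For (iv), the forward direction with $\cA = C(X)$ is a routine partition-of-unity approximation: each positive $f \in C(X,\cB)$ is uniformly approximated by $\sum_i \phi_i \otimes f(x_i) \in \cA^+ \odot \cB^+$. For the converse, with both $\cA$ and $\cB$ non-commutative, I would fix an entangled positive $M \in (M_2 \otimes M_2)^+ \setminus M_2^+ \otimes M_2^+$ and set $x = (s_1 \otimes s_2)(M)$ with the $s_j$ as above. Then $x \in (\cA_0 \otimes \cB_0)^+$ by complete positivity of $s_1 \otimes s_2$, whereas $x \notin \cA_0^+ \otimes \cB_0^+$, for else the \sh{} $\pi_1 \otimes \pi_2$ would push $x$ back to $M \in M_2^+ \otimes M_2^+$. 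Since $\cA_0$ and $\cB_0$ are nuclear, Proposition~\ref{prop:regular-positive}~(ii) and (iii) then upgrade $x$ to an entangled positive element in $(\cA \otimes \cB)^+ \setminus \cA^+ \otimes \cB^+$ and in $(\cA \otimes_\mx \cB)^+ \setminus \cA^+ \otimes_\mx \cB^+$, respectively. The principal obstacle throughout is this lifting step, confirming that entanglement survives the passage from a matrix-algebra quotient back up to the ambient \Cs{}; it is precisely the ucp-section construction combined with Propositions~\ref{prop:inclusion} and~\ref{prop:regular-positive} that makes it go through.
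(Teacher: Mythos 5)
Your proof is correct, and its overall architecture coincides with the paper's: Glimm subalgebras $\cA_0,\cB_0$ carrying matrix quotients, $\kappa$ and Proposition~\ref{prop:kappa}~(ii) for the strictness of $S(\cA\otimes_\mx\cB)\subset S(\cA)\otimes^*S(\cB)$, Theorem~\ref{thm:N-P} plus \eqref{eq:inclusions} for (i) and (ii), a partition of unity for the commutative half of (iv), and Propositions~\ref{prop:inclusion} and~\ref{prop:regular-positive} to push entanglement from $\cA_0\otimes\cB_0$ up to $\cA\otimes\cB$. Where you genuinely deviate is in producing entanglement inside $\cA_0\otimes\cB_0$: you build a ucp section $s_j$ of the evaluation $\pi_j\colon\cA_0\to M_2(\C)$ and argue that the pullback $\rho_0\circ(\pi_1\otimes\pi_2)$ of an entangled state cannot be separable (composing with $s_1\otimes s_2$ would retract it to $\rho_0$, and the separable states of $M_2(\C)\otimes M_2(\C)$ form a closed set), and likewise that $(s_1\otimes s_2)(M)$ is positive but lies outside $\cA_0^+\otimes\cB_0^+$. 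The paper instead evaluates separable states on the witness $X(s,t)=st\,S^{(n)}$, which requires knowing that pure states of $\cA_0$ factor through point evaluations, and for (iv) lifts the entangled positive matrix $H^{(n)}$ along the surjection $\pi_1\otimes\pi_2$. Your section trick gives a unified treatment of the state-level and element-level statements and avoids describing the pure states of $\cA_0$; the paper's route avoids constructing sections and exhibits a concrete self-adjoint element separating $S_*(\cA\otimes\cB)$ from $S(\cA\otimes\cB)$. Two points worth making explicit: the formula $s_j(M)(t)=\phi(t)M+(1-\phi(t))\tr(M)I_2$ lands in $\cA_0\cong\{f\in C(I,M_2):f(0)\in\C\cdot 1_2\}$ precisely because $\phi(0)=0$ (and works verbatim if $0\notin I$), and for the max-tensor half of (iv) one uses that $\cA_0\otimes_\mx\cB_0=\cA_0\otimes\cB_0$ embeds canonically into $\cA\otimes_\mx\cB$ by nuclearity of $\cA_0,\cB_0$, so your element is indeed positive there --- an identification the paper's own proof also uses implicitly.
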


\begin{proof} If $\cA$ is commutative with spectrum $X$, then $S(\cA) = \mathrm{Prob}(X)$ is a Bauer simplex, and hence in particular a Choquet simplex. By Theorem~\ref{thm:N-P},  $S(\cA) \otimes_* S(\cB) =S(\cA) \otimes^* S(\cB)$ if one of $\cA$ and $\cB$ is commutative,  which in turns implies that (ii) holds, cf.\ \eqref{eq:inclusions}. 

As remarked below Definition~\ref{def:kappa}, if $\kappa(\cA,\cB)>1$, then $S(\cA \otimes_\mx \cB) \ne S(\cA) \otimes^* S(\cB)$. Together with Proposition~\ref{prop:kappa} this shows that $S(\cA \otimes_\mx \cB) \ne S(\cA) \otimes^* S(\cB)$, and hence that $S(\cA) \otimes_* S(\cB) \ne S(\cA) \otimes^* S(\cB)$, when both $\cA$ and $\cB$ are non-commutative. By Theorem~\ref{thm:N-P}, this also implies that $S(\cA)$ is not a Choquet simplex when $\cA$ is non-commutative, thus completing the proof of (i). 

Strictness of the inclusion  $S_*(\cA \otimes \cB) \subset S(\cA \otimes \cB)$, when both $\cA$ and $\cB$ are non-commutative, is contained in \cite[Theorem 4.14]{Takesaki:Vol-I}, as remarked above. While the proof in  \cite{Takesaki:Vol-I} likely is the most direct approach to this result, we nonetheless remark that we also can obtain this fact by combining results obtained here as follows: Non-commutativity of $\cA$ and $\cB$ implies that both \Cs s have sub-quotients isomorphic to $M_n(\C)$, for some $n \ge 2$, cf.\ Glimm's Lemma (Lemma~\ref{lm:Glimm}), and matrix algebras have entangled states, cf.\ Remark~\ref{rem:entangled-matrices}. Entanglement in sub-quotients lifts to entanglement in $\cA \otimes \cB$ by Proposition~\ref{prop:entanglement-quotient} and
Proposition~\ref{prop:inclusion}.

To prove (iv), suppose first that $\cA$ is commutative, in which case we may assume that $\cA = C(X)$, for some compact Hausdorff space $X$. Then $\cA \otimes \cB = C(X,\cB)$, and each $f \in C(X,\cB)$ can be approximated by elements of the form 
$f_0=\sum_{j=1}^n \varphi_j \otimes b_j$, for suitable $\varphi_j \in C(X)$ and $b_j = f(x_j)$, where 
$0 \le \varphi_j \le 1$, $\sum \varphi_j = 1$, and  $x_j \in X$. If $f$ is positve, then 
$f_0 \in C(X)^+ \otimes \cB^+$.

Suppose next that both $\cA$ and $\cB$ are non-commutative, both with rank $\ge n$, for some $n \ge 2$. Let $\cA_0 \subseteq \cA$ and $\cB_0 \subseteq \cB$ be as in the proof of Proposition~\ref{prop:kappa} (ii), and choose surjective \sh s $\pi_1 \colon \cA_0 \to M_n$ and $\pi_2 \colon \cB_0 \to M_n$. Then $\pi_1 \otimes \pi_2 \colon \cA_0 \otimes \cB_0 \to M_n \otimes M_n$ is a surjective \sh. (All involved \Cs s are nuclear, so the max and the min norms are the same.) 
We know that $H = H^{(n)} \in (M_n(\C) \otimes M_n(\C))^+$ is positive but does not belong to 
$M_n^+ \odot M_n^+ = M_n^+ \otimes M_n^+$, cf.\ Proposition~\ref{prop:max-tensor-matrix} and Remark~\ref{rem:S}. Lift $H$ along $\pi_1 \otimes \pi_2$ to a positive element $\tilde{H}$ in $\cA_0 \otimes \cB_0$.  Then $\tilde{H}$ does not belong to $\cA_0^+ \otimes \cB_0^+$ (since $\pi_1 \otimes \pi_2$ maps $\cA_0^+ \otimes \cB_0^+$ into $M_n^+ \otimes M_n^+$). It follows  Proposition~\ref{prop:regular-positive} (ii) that $\tilde{H}$ does not beong to $\cA^+ \otimes \cB^+$ either. The same conclusion holds with respect to the maximal tensor product using Proposition~\ref{prop:regular-positive} (iii).
\end{proof}

\begin{remark} \label{rem:aodotb}
It is an immediate consequence of the second part of Theorem~\ref{thm:twoabelian} (iii) above (or of part (iv)) that $\cA^+ \odot \cB^+ \subsetneq (\cA \odot \cB)^+$, when both $\cA$ and $\cB$ are non-commutative. This inclusion can be strict also in cases where both $\cA$ and $\cB$ are commutative, e.g., with $\cA = \cB = C([0,1])$. 
Indeed, with the  identification $C([0,1]) \otimes C([0,1]) = C([0,1]^2)$, the function $f(x,y) = x^2+y^2-2xy$ belongs to $(C([0,1]) \odot C([0,1]))^+$ but not to $C([0,1])^+ \odot C([0,1])^+$. One way of seeing the latter claim is to observe that the zero-set of any function $g$ in $C([0,1])^+ \odot C([0,1])^+$ must be a finite union of sets of the form $I \times J$, where $I,J \subseteq [0,1]$ are closed sets. 
\end{remark}

\noindent We included Theorem~\ref{thm:twoabelian} (i) in order  to put this classical fact into the context of entanglement and tensor products of compact convex sets.  For comparison, we present below a more traditional  route to this result.  Recall  for this  purpose that a compact convex set $K$ is a Choquet simplex if and only if the ordered vector space $A(K)$ of its affine functions has the Riesz Interpolation Property, 
\cite[Section II.3]{Alf:convex}. Accordingly, the state space $S(\cA)$ of a unital \Cs{} $\cA$ is a Choquet simplex if and only if $\cA_\sa$, equipped with the usual $C^*$-order, has the Riesz Interpolation Property. 

\begin{proposition} If $\cA$ is a non-commutative \Cs, then $\cA_\sa$ equipped with the usual order does not satisfy the Riesz Interpolation Property.
\end{proposition}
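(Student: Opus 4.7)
The plan is to exhibit a concrete failure of the Riesz Interpolation Property in $M_2(\C)$ and then transplant it to any non-commutative $\cA$ via a unital completely positive surjection $\cA \to M_2(\C)$.

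First, for the matrix case, I would pick two non-commuting projections $p, q \in M_2(\C)$ --- say $p = E_{11}$ and $q = \tfrac{1}{2}\bigl(\begin{smallmatrix} 1 & 1 \\ 1 & 1 \end{smallmatrix}\bigr)$ --- and set $a_1 = 0$, $a_2 = p + q - I_2$, $b_1 = p$, $b_2 = q$. All four inequalities $a_i \le b_j$ hold trivially ($b_1 - a_2 = I_2 - q$ and $b_2 - a_2 = I_2 - p$ are positive). A short calculation then shows that no interpolant $c$ can exist: $0 \le c \le p$ forces $c = \alpha p$ with $\alpha \in [0, 1]$ by rank-one-ness of $p$, and combining with $c \le q$ forces $\alpha = 0$; but $c = 0 \ge p + q - I_2$ would then give $p + q \le I_2$, which fails because $p$ and $q$ are non-orthogonal (the matrix $I_2 - p - q$ has negative determinant).

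To transplant this to $\cA$: non-commutativity gives $\rank(\cA) \ge 2$, and the construction in the proof of Lemma~\ref{lm:Glimm}\,(i)$\Rightarrow$(ii), namely Kadison's transitivity theorem applied to a $2$-dimensional subspace of an irreducible representation of $\cA$, furnishes a unital sub-\Cs{} $\cD \subseteq \cA$ together with a unital surjective $*$-homomorphism $\varphi \colon \cD \to M_2(\C)$. I would then lift $p$ and $q$ to positive contractions $\tilde p, \tilde q \in \cD$ --- a standard functional-calculus truncation of arbitrary positive lifts --- and set $\tilde a_1 = 0$, $\tilde a_2 = \tilde p + \tilde q - 1_\cA$, $\tilde b_1 = \tilde p$, $\tilde b_2 = \tilde q$. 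The identities $\tilde b_1 - \tilde a_2 = 1_\cA - \tilde q$ and $\tilde b_2 - \tilde a_2 = 1_\cA - \tilde p$, together with $\tilde p, \tilde q \le 1_\cA$, give $\tilde a_i \le \tilde b_j$ in $\cA$ for all $i, j$.

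Now assume for contradiction that $\cA_\sa$ has the Riesz Interpolation Property, and pick $c \in \cA_\sa$ with $\tilde a_i \le c \le \tilde b_j$ for all $i, j$. Here is the key maneuver: by Arveson's extension theorem, the unital $*$-homomorphism $\varphi$ extends to a unital completely positive (hence positive) map $\varphi' \colon \cA \to M_2(\C)$ that still agrees with $\varphi$ on $\cD$. Applying $\varphi'$ to the interpolation inequalities yields $a_i \le \varphi'(c) \le b_j$ in $M_2(\C)$, contradicting the first step.

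The main obstacle is conceptual rather than technical: one has to resist looking for a counterexample living entirely inside $\cA$ --- where non-trivial projections need not exist at all (e.g.\ the Jiang--Su algebra) --- and instead produce it in $M_2(\C)$ and pull it back along $\varphi$. Arveson's extension theorem is what bridges the two worlds, since $\varphi$ itself is only defined on the sub-\Cs{} $\cD$, whereas the hypothetical interpolant $c$ a priori lives in all of $\cA$.
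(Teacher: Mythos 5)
Your argument is correct (for unital $\cA$) but follows a genuinely different route from the paper. The paper applies Glimm's lemma to obtain a nonzero \sh{} $CM_2 \to \cA$ and chooses the four elements as images of explicit functions in the cone, arranging that $b_1$ and $b_2$ are \emph{orthogonal} positive elements; then any candidate interpolant $c$ with $0 \le c \le b_1, b_2$ is forced to be $0$ inside $\cA$ itself, and the contradiction is that $a_2 \nleq 0$. In particular the paper never needs to transport the hypothetical interpolant back to a matrix algebra, and its argument is unit-free. You instead build the counterexample directly in $M_2(\C)$ from two non-orthogonal projections, lift the data along the unital surjection $\varphi \colon \cD \to M_2(\C)$ coming from the Kadison-transitivity step of Glimm's lemma, and then push the hypothetical interpolant forward using an Arveson (ucp) extension $\varphi' \colon \cA \to M_2(\C)$ of $\varphi$; this is a clean reduction to a finite-dimensional failure of interpolation, at the cost of invoking injectivity of $M_2(\C)$, which the paper's choice of elements renders unnecessary. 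One caveat: the proposition as stated does not assume $\cA$ is unital, and your construction uses $1_\cA$ both in the element $\tilde p + \tilde q - 1_\cA$ and in the unital extension step, so as written it covers only the unital case (which is the case relevant to Theorem~\ref{thm:twoabelian}\,(i)); the paper's cone-based argument avoids this issue. Your verifications in $M_2(\C)$ ($0 \le c \le p$ forces $c = \alpha p$, $c \le q$ forces $\alpha = 0$, and $p + q \nleq I_2$) and the positive contractive lifting along $\varphi$ are all sound.
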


\noindent This result is contained in Stratila--Zsido, \cite[Section 4.8, Corollary 1]{Stratila-Zsido:Banach}, see also Kadison, \cite{Kadison:anti-lattice}, where it is shown that $\cA_\sa$ is an anti-lattice when $\cA$ is non-commutative. We give here a different proof, using Glimm's lemma.

\begin{proof} Let $\cA$ be a non-commutative \Cs. We find self-adjoint elements $a_1, a_2, b_1, b_2$ in $\cA$ such that $a_i \le b_j$, for $i,j=1,2$, and for which there is no self-adjoint $c \in \cA$ satisfying $a_i \le c \le b_j$, for for $i,j=1,2$. 

By Glimm's Lemma (Lemma~\ref{lm:Glimm}), there is a non-zero \sh{} $\varphi \colon CM_2 \to \cA$. Let $(e_{ij})$ denote the standard matrix units for $M_2(\C) \, (=M_2)$. Set  
$$f = t \otimes \big({\textstyle{-\frac23}} (e_{11}+e_{22}) + (e_{12}+e_{21})\big) \in CM_2,$$
and set $a_1=0$, $a_2 = \varphi(f)$, $b_j = \varphi(t \otimes e_{jj})$, for $j=1,2$, where, for each $a \in M_2(\C)$ we let $t \otimes a$ denote the function $t \mapsto ta$ in $CM_2$. 

We claim that  $a_2 \le b_j$, for $j=1,2$, while $a_2 \nleq 0$. To this end, check first that
$$ \begin{pmatrix} -2/3 & \; 1 \\ \; 1 & -2/3\end{pmatrix} \; \le \;  \begin{pmatrix} 1 & 0\\ 0 & 0\end{pmatrix}, \quad 
\begin{pmatrix} -2/3 & \; 1 \\ \; 1 & -2/3\end{pmatrix} \; \le \;  \begin{pmatrix} 0 & 0 \\ 0 & 1 \end{pmatrix}, \quad
\begin{pmatrix} -2/3 & \; 1 \\ \; 1 & -2/3\end{pmatrix} \; \nleq \; 0.
$$
Let $\pi_s \colon CM_2 \to M_2$ be evaluation at $s \in [0,1]$. Using the inequalities above,
we see that $\pi_s(f) \le \pi_s(t \otimes e_{jj})$ and $\pi_s(f) \nleq 0$, for $j=1,2$ and $s \in [0,1]$. This shows that $f \le t \otimes e_{jj}$ in $CM_2$, and hence that $a_2 \le b_j$, for $j=1,2$, and it also shows that $f$ is non-negative in each (non-zero) quotient of $CM_2$, and hence that $a_2$ is non-negative.

Suppose now that $a_i \le c \le b_j$, for $i,j=1,2$. Then  $0=a_1 \le c \le b_j$, for $j=1,2$, which implies $c=0$ (because $b_1$ and $b_2$ are orthogonal); but then we cannot have $a_2 \le c$. 
\end{proof}

\section{Tensor products of trace spaces of \Cs s} \label{sec:traces}

\noindent Let $\cA$ be a unital \Cs, and let $T(\cA)$ denote the set of tracial states on $\cA$. It is known that $T(\cA)$, if non-empty, is a  Choquet simplex, see \cite{Thoma:Gruppen} and \cite{BlaRor:trace-simplex}.  We refer to  \cite{Haa:quasitrace} and \cite{Pop:traces} for complete chacterizations of the \Cs s $\cA$ for which $T(\cA) \ne \emptyset$. The extreme points of $T(\cA)$  are ``factorial traces'', i.e., tracial states $\tau$ on $\cA$ for which $\pi_\tau(\cA)''$ is a factor. 

Let $i[\cA_\sa,\cA_\sa]$ denote the closed subspace of $\cA_\sa$ spanned by commutators $i(xy-yx)$, with $x,y \in \cA_\sa$, and let $q \colon \cA_\sa \to \cA_\sa/i[\cA_\sa,\cA_\sa]$ denote the quotient mapping. (See also \cite{CuntzPed-JFA}, where traces were studied using this space realized as $\cA^q$, a quotient of $\cA_\sa$ with respect to a specific equivalence relation.) A bounded hermitian functional on a \Cs{} $\cA$ is tracial if and only if it factors through $q$; for such a functional $\tau$, let $\widehat{\tau}$ denote its descent to $q(\cA_\sa)$.
Identify  $A(T(\cA))$ with $q(\cA_\sa)$ to obtain
$$T(\cA) \cong S(q(\cA_\sa), q(\cA^+), q(1_\cA)) =  \{\widehat{\tau} : \tau \in T(\cA)\}.$$

The tensor products of the trace simplexes of two \Cs s $\cA$ and $\cB$ are accordingly given by
\begin{eqnarray}
T(\cA) \otimes_* T(\cB) &=& S\big( q(\cA_\sa) \odot q(\cB_\sa), q(\cA^+) \otensorhat  q(\cB^+) , q(1_\cA) \otimes q(1_\cB)\big), \\
T(\cA) \otimes^* T(\cB) &=& S\big( q(\cA_\sa) \odot q(\cB_\sa), q(\cA^+) \odot q(\cB^+) ,q( 1_\cA) \otimes q(1_\cB)\big).
\end{eqnarray}
Since $T(\cA)$ and $T(\cB)$ are Choquet simplexes, the two tensor products above agree, cf.\ Theorem~\ref{thm:N-P}, and we denote them both by $T(\cA) \otimes T(\cB)$.

A linear functional on $\cA_\sa \odot \cB_\sa$ descends to a linear functional on $q(\cA_\sa) \odot q(\cB_\sa)$ if and only if it vanishes on the kernel of $q \otimes q \colon \cA_\sa \odot \cB_\sa \to q(\cA_\sa) \odot q(\cB_\sa)$ if and only if it is tracial. For the latter, use that $[a \otimes b, c \otimes d] = ca \otimes [b,d] + [a,c] \otimes bd$, when $a,c \in \cA$ and $b,d \in \cB$, to see that
$$\mathrm{ker}(q \otimes q) = i[\cA_\sa, \cA_\sa] \odot \cB_\sa + \cA_\sa \odot i[\cB_\sa, \cB_\sa] = i[\cA_\sa \odot \cB_\sa, \cA_\sa \odot \cB_\sa].$$
This shows that each pair $\tau_\cA \in T(\cA)$ and $\tau_\cB \in T(\cB)$ yields a  linear functional $\widehat{\tau}_\cA \otimes \widehat{\tau}_\cB$ on $q(\cA_\sa) \odot q(\cB_\sa)$. Conversely, each trace in $T(\cA \otimes \cB)$ or in $T(\cA \otimes_\mx \cB)$ restricts to a linear functional on $\cA_\sa \odot \cB_\sa$ which further descends to a linear functional on $q(\cA_\sa) \odot q(\cB_\sa)$.

In analogy with the corresponding definition for state spaces, consider the set 
$$T_*(\cA \otimes \cB) = \overline{\conv\{\tau_\cA \otimes \tau_\cB : \tau_\cA \in T(\cA), \tau_\cB \in T(\cB)\}},$$
which is a weak$^*$-closed convex subset of $T(\cA \otimes \cB)$, the trace simplex of the minimal tensor product $\cA \otimes \cB$.

\begin{theorem} \label{thm:traces}
Let $\cA$ and $\cB$ be unital \Cs s both admitting at least one tracial state. Then
$$T(\cA) \otimes T(\cB) = T_*(\cA \otimes \cB) = T(\cA \otimes \cB) = T(\cA \otimes_\mx \cB).$$
\end{theorem}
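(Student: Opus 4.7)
The plan is to identify all four sets with a common weak$^*$-compact convex subset of the algebraic dual of $q(\cA_\sa) \odot q(\cB_\sa)$, via restriction of a trace to $\cA_\sa \odot \cB_\sa$ followed by descent through $q \otimes q$. Because both $T(\cA)$ and $T(\cB)$ are Choquet simplexes, Theorem~\ref{thm:N-P} already gives $T(\cA) \otimes_* T(\cB) = T(\cA) \otimes^* T(\cB) = T(\cA) \otimes T(\cB)$. I would start from the elementary chain
$$T_*(\cA \otimes \cB) \subseteq T(\cA \otimes \cB) \subseteq T(\cA \otimes_\mx \cB),$$
the second inclusion coming from pulling traces back along the canonical surjection $\cA \otimes_\mx \cB \twoheadrightarrow \cA \otimes \cB$. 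The strategy is then to show that the restriction-descent map is well-defined and injective on all of $T(\cA \otimes_\mx \cB)$, and already surjective onto $T(\cA) \otimes T(\cB)$ when restricted to the bottom of the chain; that will force both intermediate inclusions to be equalities.

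First I would prove the trace analogue of Proposition~\ref{prop:statesAB}: the restriction-descent map $T_*(\cA \otimes \cB) \to T(\cA) \otimes T(\cB)$ is an affine homeomorphism. Its image plainly lies in the state space of $\big(q(\cA_\sa) \odot q(\cB_\sa), q(\cA^+) \otensorhat q(\cB^+), q(1_\cA) \otimes q(1_\cB)\big) = T(\cA) \otimes_* T(\cB)$; injectivity uses density of $\cA_\sa \odot \cB_\sa$ in $(\cA \otimes \cB)_\sa$ together with continuity of traces. Surjectivity follows from Proposition~\ref{prop:tensor-I}(iv), which presents $T(\cA) \otimes T(\cB)$ as the closed convex hull of the elementary tensors $\hat{\tau}_\cA \otimes \hat{\tau}_\cB$: each such tensor is the restriction-descent of the product trace $\tau_\cA \otimes \tau_\cB \in T(\cA \otimes \cB)$ (which is a tracial state on $\cA \otimes \cB$ because each factor is), and the closed convex hull of these product traces is by definition $T_*(\cA \otimes \cB)$. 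Second, I would extend the same map $\tau \mapsto \hat{\tau}$ to the larger domain $T(\cA \otimes_\mx \cB)$. Well-definedness rests on the commutator identity $\ker(q \otimes q) = i[\cA_\sa \odot \cB_\sa, \cA_\sa \odot \cB_\sa]$ recorded just before the theorem, which is automatically killed by any tracial functional. For the target cone, given a generator $q(a) \otimes q(b)$ with $a \in \cA^+$, $b \in \cB^+$, the lift $a \otimes b = (a^{1/2} \otimes b^{1/2})^2$ is positive in $\cA \otimes_\mx \cB$, hence $\hat{\tau}(q(a) \otimes q(b)) = \tau(a \otimes b) \ge 0$; normalization is clear and injectivity again follows from density.

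Combining the two steps, the composite
$$T_*(\cA \otimes \cB) \hookrightarrow T(\cA \otimes \cB) \hookrightarrow T(\cA \otimes_\mx \cB) \longrightarrow T(\cA) \otimes T(\cB)$$
agrees with the bijection of step one, while the last arrow is an injection by step two, so both intermediate inclusions are forced to be equalities and all four sets coincide. The only delicate point is the second step: one must verify that the descent on $T(\cA \otimes_\mx \cB)$ really lands in the small state space defined by the cone $q(\cA^+) \odot q(\cB^+)$ rather than in some a priori larger one. Once positivity of elementary tensors in the maximal tensor product is invoked this drops out, and the remainder of the theorem is a formal consequence of the Namioka--Phelps framework developed in Section~\ref{sec:NP}.
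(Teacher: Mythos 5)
Your proposal is correct and follows essentially the same route as the paper: it uses Theorem~\ref{thm:N-P} to identify $T(\cA)\otimes_* T(\cB)$ with $T(\cA)\otimes^* T(\cB)$, identifies $T_*(\cA\otimes\cB)$ with $T(\cA)\otimes T(\cB)$ via the restriction--descent map and Proposition~\ref{prop:tensor-I}, and collapses the chain $T_*(\cA\otimes\cB)\subseteq T(\cA\otimes\cB)\subseteq T(\cA\otimes_\mx\cB)$ by showing (exactly as in the paper, via positivity of elementary tensors, i.e.\ $\cA^+\odot\cB^+\subseteq(\cA\odot\cB)^+$, and the commutator description of $\ker(q\otimes q)$) that every trace on $\cA\otimes_\mx\cB$ descends into $T(\cA)\otimes^* T(\cB)$. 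Your phrasing in terms of an injective descent map whose restriction to the bottom of the chain is already surjective is just a functional reformulation of the paper's chain-of-inclusions argument.
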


\begin{proof} We  show that 
$$T(\cA) \otimes_* T(\cB) = T_*(\cA \otimes \cB) \subseteq T(\cA \otimes \cB) \subseteq T(\cA \otimes_\mx \cB) \subseteq T(\cA) \otimes^* T(\cB),$$
and then use that $T(\cA) \otimes_* T(\cB) = T(\cA) \otimes^* T(\cB)$,  cf.\ Theorem~\ref{thm:N-P},   to finish the proof. 

By Proposition~\ref{prop:tensor-I} (iii), and following the discussion above,
$$T(\cA) \otimes_* T(\cB) =\overline{\conv} \{\widehat{\tau}_\cA \otimes \widehat{\tau}_\cB : \tau_\cA \in 
T(\cA), \; \tau_\cB \in T(\cB)\} = T_*(\cA \otimes \cB),$$
where the last equality is via the descent map $\tau_\cA \otimes \tau_\cB \mapsto \widehat{\tau}_\cA \otimes \widehat{\tau}_\cB$. 

The inclusion $T_*(\cA \otimes \cB) \subseteq T(\cA \otimes \cB)$ holds by definition, and  $T(\cA \otimes \cB) \subseteq T(\cA \otimes_\mx \cB)$  because $\cA \otimes \cB$ is a quotient of $\cA \otimes_\mx \cB$.

Finally, as in the proof of Lemma~\ref{lm:max-states}, 
we can identify traces on $\cA \otimes_\mx \cB$ with normalized tracial functionals on $\cA_\sa \odot \cB_\sa$ which are positive on $(\cA \odot \cB)^+$. Since $(\cA \odot \cB)^+ \supseteq \cA^+ \odot \cB^+$, we obtain the last inclusion above.
\end{proof}

\noindent The identities $T_*(\cA \otimes \cB)=T(\cA \otimes \cB) = T(\cA \otimes_\mx \cB)$ appearing in Theorem~\ref{thm:traces} are well-known.  In our context, the former equality says that traces cannot be entangled! The latter equality was observed by Kirchberg in \cite{Kir:CEP} (included in his proof of (B4) $\Rightarrow$ (B3)), see also \cite[Exercise 13.3.3]{BO-Approx} and \cite{KirRor:Central-sequence}. It can be rephrased by saying that every tracial state on $\cA \otimes_\mx \cB$ factors through $\cA \otimes \cB$, i.e., vanishes on the kernel of the canonical map $\cA \otimes_\mx \cB \to \cA \otimes \cB$. 

One can prove the two identities above directly as follows:  It suffices to show that $\partial_e T(\cA \otimes_\mx \cB) \subseteq T_*(\cA \otimes \cB)$. 
Let $\tau \in \partial_e T(\cA \otimes_\mx \cB)$, and let $\tau_\cA \in T(\cA)$ and $\tau_\cB \in T(\cB)$ be the ``marginal distributions'' of $\tau$, cf.\ \eqref{eq:slice} and \eqref{eq:pi_j}. As $\tau$ is factorial (being extremal), it follows that $\pi_\tau(\cA \otimes 1_\cB)'' \cong \pi_{\tau_\cA}(\cA)''$ and $\pi_\tau(1_\cA \otimes \cB)'' \cong \pi_{\tau_\cB}(\cB)''$ are factors. Hence $\tau_\cA$ and $\tau_\cB$ are factorial and therefore extremal. But then $\tau = \tau_\cA \otimes \tau_\cB \in T_*(\cA \otimes \cB)$ by Lemma~\ref{lm:slice}.

Theorem~\ref{thm:traces} provides the new information that $T(\cA) \otimes T(\cB) =  T(\cA \otimes \cB)$. It also offers a new way of understanding the identities $T_*(\cA \otimes \cB)=T(\cA \otimes \cB) = T(\cA \otimes_\mx \cB)$ in terms of entanglement and tensor products of compact convex sets.

Combining Theorem~\ref{thm:traces} with Example~\ref{ex:tensor} we get the corollary below, valid also for the max-tensor product of the \Cs s. 

\begin{corollary} \label{cor:traces} Let $\cA$ and $\cB$ be unital \Cs s with non-empty trace simplexes. Then $T(\cA \otimes \cB)$ is a Bauer simplex if and only if both $T(\cA)$ and $T(\cB)$ are Bauer simplexes in which case $\partial_e T( \cA \otimes \cB) = \partial_e T(\cA) \times \partial_e T(\cB)$.
\end{corollary}

\noindent As before, we identify $\partial_e T(\cA) \times \partial_e T(\cB)$ with the set of extremal product traces $\tau_\cA \otimes \tau_\cB$.
Using the notion of infinite tensor products of simplexes defined at the end of Section~\ref{sec:NP} we can extend the results above to infinite tensor products (also valid for the max-tensor product of the \Cs s).

\begin{theorem} \label{thm:traces-II} Let $\cA_1,\cA_2, \cA_3, \dots$ be a sequence of unital \Cs s each with non-empty trace simplex. Then $T(\bigotimes_{k \ge 1} \cA_k) = \bigotimes_{k \ge 1} T(\cA_k)$.
\end{theorem}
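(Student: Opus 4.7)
The plan is to reduce the infinite statement to the finite one (Theorem~\ref{thm:traces}) via a standard inverse limit argument, exploiting that both sides of the claimed identity are naturally inverse limits indexed by $n \ge 1$. Write $\cA^{(n)} = \bigotimes_{k=1}^n \cA_k$ and view $\bigotimes_{k \ge 1} \cA_k$ as the inductive limit of the $\cA^{(n)}$ with respect to the unital inclusions $\iota_n \colon \cA^{(n)} \hookrightarrow \cA^{(n+1)}$, $x \mapsto x \otimes 1_{\cA_{n+1}}$. Since $\bigcup_n \cA^{(n)}$ is dense in $\bigotimes_{k\ge 1}\cA_k$, a tracial state is uniquely determined by its restrictions to each $\cA^{(n)}$, and conversely any compatible family $(\tau_n)$ with $\tau_{n+1}\circ \iota_n = \tau_n$ extends by continuity to a tracial state on the inductive limit. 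This yields a weak$^*$-homeomorphism
$$T\Big(\bigotimes_{k \ge 1}\cA_k\Big) \; \cong \; \varprojlim T(\cA^{(n)}),$$
with connecting maps given by restriction along $\iota_n$.

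Next, I would apply Theorem~\ref{thm:traces} at each finite stage to identify $T(\cA^{(n)}) = \bigotimes_{k=1}^n T(\cA_k)$. To glue these identifications into an isomorphism of inverse systems, I need to check that the restriction map $T(\cA^{(n+1)}) \to T(\cA^{(n)})$ corresponds under Theorem~\ref{thm:traces} to the projection $\pi_n$ from the definition \eqref{eq:inf-tensor} of an infinite tensor product of simplexes. By construction (see \eqref{eq:pi_j}), both maps are continuous and affine, and both send a product trace $\tau_1 \otimes \cdots \otimes \tau_{n+1}$ to $\tau_1 \otimes \cdots \otimes \tau_n$; since such product traces span a weak$^*$-dense subspace by Proposition~\ref{prop:tensor-I}~(iv) applied to the Choquet simplex $\bigotimes_{k=1}^{n+1}T(\cA_k)$, the two maps agree.

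Passing to the inverse limit and invoking the definition \eqref{eq:inf-tensor} gives
$$T\Big(\bigotimes_{k \ge 1}\cA_k\Big) \;\cong\; \varprojlim\, \bigotimes_{k=1}^n T(\cA_k) \;=\; \bigotimes_{k\ge 1} T(\cA_k),$$
as desired. The same proof works for the max-tensor product of the $\cA_k$, because Theorem~\ref{thm:traces} identifies $T(\cA^{(n)}_{\mathrm{min}}) = T(\cA^{(n)}_{\mathrm{max}})$ at every finite stage.

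The main obstacle is the compatibility check in the second paragraph: one must be sure that the identification provided by Theorem~\ref{thm:traces} at level $n+1$ intertwines the $C^*$-algebraic restriction map with the simplex-theoretic projection $\pi_n$. The cleanest way is to verify agreement on the spanning set of product traces and then extend by weak$^*$-continuity, as sketched above; no deeper input beyond Theorem~\ref{thm:traces} and Proposition~\ref{prop:tensor-I} is required.
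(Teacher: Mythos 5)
Your proposal is correct and follows essentially the same route as the paper: realize $\bigotimes_{k\ge 1}\cA_k$ as the inductive limit of the finite tensor products, identify $T(\bigotimes_{k\ge 1}\cA_k)$ with the inverse limit of the $T(\bigotimes_{k=1}^n\cA_k)$, apply Theorem~\ref{thm:traces} at each finite stage, and check that the restriction maps correspond to the projections $\pi_n$ of \eqref{eq:inf-tensor}. Your explicit verification of that last compatibility (agreement on elementary tensor traces plus affinity, continuity and density of their convex hull) is exactly the detail the paper leaves implicit, so there is nothing to add.
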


\begin{proof} The infinite tensor product $\bigotimes_{k \ge 1} \cA_k$ is the inductive limit $\varinjlim \bigotimes_{k = 1}^n \cA_k$, where each $\bigotimes_{k = 1}^n \cA_k$ is embedded in $\bigotimes_{k = 1}^{n+1} \cA_k$
via the map $x \mapsto x \otimes 1_{\cA_{n+1}}$. It follows that the trace simplex of $\bigotimes_{k \ge 1} \cA_k$  is the inverse limit
$$\xymatrix{T(\cA_1) & T(\cA_1 \otimes \cA_2) \ar[l] & T(\cA_1 \otimes \cA_2 \otimes \cA_3) \ar[l] & \cdots \ar[l] & T\big(\bigotimes_{k \ge 1} \cA_k\big). \ar[l]
}$$
By Theorem~\ref{thm:traces}, for each $n \ge 1$,  $T(\bigotimes_{k=1}^n \cA_k) = \bigotimes_{k=1}^n T(\cA_k)$,  and the connecting map $\bigotimes_{k=1}^{n+1} T(\cA_k) \to \bigotimes_{k=1}^{n} T(\cA_k)$ induced by the connecting map $\bigotimes_{k = 1}^n \cA_k \to \bigotimes_{k = 1}^{n+1} \cA_k$ is the precisely the map $\pi_n$ from \eqref{eq:inf-tensor}. This proves the claim.
\end{proof}

\noindent The result below follows from Theorem~\ref{thm:traces-II} and Example~\ref{ex:inftensor}, and is valid also for the max-tensor product on the \Cs s.

\begin{corollary} \label{cor:traces-II} Let $\cA_1,\cA_2, \cA_3, \dots$ be a sequence of unital \Cs s each with non-empty trace simplex. Then $T(\bigotimes_{k \ge 1} \cA_k)$ is a Bauer simplex if and only if $T(\cA_k)$ is a Bauer simplex, for each $k$, in which case $\partial_e T(\bigotimes_{k \ge 1} \cA_k) = 
\prod_{k=1}^\infty \partial_e T(\cA_k)$.
\end{corollary}

\noindent  By primeness of the Poulsen simplex, Corollary~\ref{cor:not-Poulsen}, we infer that if $\cA$ and $\cB$ are unital \Cs s both admitting tracial states, then $T(\cA \otimes \cB)$ is the Poulsen simplex if and only if one of $T(\cA)$ and $T(\cB)$ is trivial and the other is the Poulsen simplex. In particular if $\Gamma_1$ and $\Gamma_2$ are discrete groups with more than one element, then $T(C^*(\Gamma_1 \times \Gamma_2))$ is never a Poulsen simplex, since $T(C^*(\Gamma))$ is non-empty and non-trivial for all groups $\Gamma$ with more than one element. This  relates to recent works, \cite{OSV:Poulsen, ISV:Poulsen}, showing that the trace simplex of the full group \Cs{} $C^*(\Gamma)$ is the Poulsen simplex for a large class of groups $\Gamma$, including free groups, and that $T(\cA)$ likewise is the Poulsen simplex for many universal free product \Cs s.

{\small{
\bibliographystyle{amsplain}
\providecommand{\bysame}{\leavevmode\hbox to3em{\hrulefill}\thinspace}
\providecommand{\MR}{\relax\ifhmode\unskip\space\fi MR }
\providecommand{\MRhref}[2]{%
  \href{http://www.ams.org/mathscinet-getitem?mr=#1}{#2}
}
\providecommand{\href}[2]{#2}

}}

\vspace{1cm}

\noindent
\begin{tabular}{ll}
Magdalena Musat & Mikael R\o rdam \\
Department of Mathematical Sciences & Department of Mathematical Sciences\\
University of Copenhagen & University of Copenhagen\\ 
Universitetsparken 5, DK-2100, Copenhagen \O & Universitetsparken 5, DK-2100, Copenhagen \O \\
Denmark & Denmark\\
musat@math.ku.dk & rordam@math.ku.dk
\end{tabular}

\end{document}